\title{Turaev--Viro TQFT and the Rank versus Genus Conjecture}
\author{\textsc{Qing Lan}\\
%\textit{Yau Mathematical Sciences Center}\\ 
%\textit{Tsinghua University}\\
%\href{mailto:yihuang@math.tsinghua.edu.cn}{\texttt{yihuang@math.tsinghua.edu.cn}}\\
}
\date{\today}
\theoremstyle{plain}
\newtheorem{thm}{Theorem}
\newtheorem{prop}[thm]{Proposition}
\newtheorem{lem}[thm]{Lemma}
\newtheorem{cor}[thm]{Corollary}
\newtheorem{conj}[thm]{Conjecture}
\theoremstyle{definition}
\newtheorem{define}{Definition}
\theoremstyle{remark}
\newcommand\Ob{\operatorname{Ob}}
\newcommand\id{\operatorname{id}}
\newcommand\Hom{\operatorname{Hom}}
\newcommand\End{\operatorname{End}}
\newcommand\Hilb{\operatorname{Hilb}}
\newcommand\ev{\operatorname{ev}}
\newcommand\coev{\operatorname{coev}}
\newcommand\tr{\operatorname{tr}}
\newcommand\cob{\operatorname{Cob}_3}
\newcommand\Mod{\operatorname{Mod}}
\begin{document}
\maketitle

%\normalsize
\small

\begin{abstract}
This paper presents a way to estimate the Heegaard genus of a $3$-manifold using the Turaev--Viro state sum TQFT. The Turaev--Viro state sum TQFT is derived from the modular category associated to the quantum group $U_q(\mathfrak{sl}_2)$, which is unitary for some $q$ by Wenzl. Hence by Turaev and Virelizier the corresponding TQFT is unitary. We modify a proof by Garoufalidis to give a lower bound of the Heegaard genus using a unitary TQFT, and then  use the software \textit{Regina} to provide some known counterexamples to the rank versus genus conjecture. 
\end{abstract}

\normalsize
%Main body.

%chap01

\section{Overview}
%
%To be written. 
%
%An overview about (the history of) TQFT and rank versus genus conjecture. 
%
%Known results. 
%
%What is new in this thesis. 
%
%Structure of the thesis. 

For a closed orientable $3$-manifold $M$, the \textit{Heegaard genus} $g(M)$ of $M$,
is the minimal genus over all Heegaard splittings of $M$. The \textit{rank} $r(M)$ of $M$ is the rank of the fundamental group $\pi_1(M)$. Hence
$
r(M)\leq g(M)$. 

\begin{conj}
r(M)= g(M)? 
\end{conj}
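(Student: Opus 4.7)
The plan is to \emph{disprove} the conjecture by exhibiting closed orientable $3$-manifolds $M$ with $r(M) < g(M)$. Since $r(M)$ can be read off from any presentation of $\pi_1(M)$, the difficulty lies entirely in lower-bounding $g(M)$ sharply enough to separate it from $r(M)$.

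First I would establish a genus lower bound via the Turaev--Viro TQFT. The construction starts from the modular category associated to $U_q(\mathfrak{sl}_2)$ at suitable roots of unity, which Wenzl shows to be unitary; by Turaev--Virelizier the corresponding $3$-dimensional Turaev--Viro TQFT $Z_{TV}$ is then unitary. The second step is to adapt Garoufalidis's argument, which controls Heegaard genus in terms of the norms of TQFT vectors assigned to handlebodies. Unitarity simplifies this step because transition amplitudes have well-behaved moduli; the target is an inequality of the shape
\[
g(M) \;\geq\; F\bigl(TV_r(M),\ \dim Z_{TV}(\Sigma_g)\bigr)
\]
that is effectively computable from the level-$r$ Turaev--Viro invariants.

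Next I would hunt for candidate counterexamples. Natural places to look are graph manifolds and Seifert fibered spaces already suggested in the literature (for instance by Schultens--Weidmann or in the hyperbolic setting by T. Li) as spaces where rank is suspected to drop below genus. For each candidate I would use \emph{Regina} to compute $TV_r(M)$ for several small levels $r$, convert the output into the lower bound on $g(M)$ above, and compare it against the easily computed $r(M)$; any manifold where the bound exceeds the rank is a counterexample.

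The main obstacle is the quantitative tightness of the Turaev--Viro bound. Even when $r(M)<g(M)$ genuinely holds, it is not clear a priori that the invariants grow fast enough with genus to detect this gap at levels $r$ small enough to compute. Success therefore depends both on choosing the right manifolds and on picking the right level $r$; a secondary difficulty is that the state-sum evaluation in \emph{Regina} is exponential in the size of the triangulation, so candidate manifolds must admit modestly small triangulations for the computation to terminate in reasonable time.
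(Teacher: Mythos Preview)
Your proposal is correct and follows essentially the same route as the paper: establish unitarity of the Turaev--Viro TQFT via Wenzl and Turaev--Virelizier, adapt Garoufalidis's Cauchy--Schwarz argument on handlebody vectors to obtain a computable lower bound for $g(M)$, and then run a \textit{Regina} search over small triangulations (at level $r=5$) to locate Seifert fibered and graph manifolds where the bound exceeds the rank. The confirmed counterexamples recovered this way are the Boileau--Zieschang examples; one point the paper stresses that you should make explicit is that Garoufalidis's original argument tacitly assumes $\dim Z(S^2)=1$, which holds for the state sum TQFT but must be checked.
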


This conjecture was proposed by Waldhausen \cite{waldhausen}. In 1984, Boileau--Zieschang \cite{BZ} discovered the first counterexamples among Seifert manifolds. Later, for any $n$, graph manifolds of genus $4n$ whose fundamental
group is $3n$-generated are constructed by Schultens and Weidmann  in \cite{sch-weid}. In 2009, Namazi and Souto \cite{namazi} showed that rank equals genus if the
gluing map of a Heegaard splitting is a high power of a generic pseudo-Anosov map. In 2013, Tao Li showed in \cite{taoli} that there are counterexamples among hyperbolic manifolds, using topological argument.

In this thesis, we apply the Turaev--Viro TQFT to the rank versus genus conjecture.  We obtain a lower bound of the Heegaard genus of a $3$-manifold, and provide some known counterexamples to the rank versus genus conjecture.

\subsection{Background}

Inspired by Witten, Reshetikhin and Turaev \cite{RT} constructed quantum invariants using representations of a quantum group $U_q(\mathfrak{sl}_2(\mathbb{C}))$. This invariant is part of a topological quantum field theory (TQFT). This was generalized to simple Lie algebras of type A, B, C and D in \cite{TW}. More generally, the TQFT can be constructed using a general modular category, as is explained in \cite{Tumodular} or \cite{Tu}. There is another approach via Kauffman bracket, see \cite{BHMV1992}, \cite{BHMV1995}. The book \cite{lusztig} is an introduction to quantum groups. 

In \cite{TV}, Turaev and Viro constructed a TQFT using quantum 6j-symbols associated to $U_q(\mathfrak{sl}_2(\mathbb{C}))$. More generally, the construction can be performed using a unimodal category, see \cite{Tu}, or using a finite semisimple spherical category of non-zero dimension, see \cite{BW}. This is called the Turaev--Viro--Barrett--Westbury invariant in the book by Turaev and Virelizier \cite{TuVi}. In 2018, Qingtao Chen and Tian Yang \cite{CY} defined the invariant for a manifold with boundary and proposed volume conjectures for the Reshetikhin--Turaev and the Turaev--Viro invariants. 

The process of constructing a modular category from $U_q(\mathfrak{sl}_2(\mathbb{C}))$ using the notion of tilting modules is explained in \cite{BK}. 
In \cite{BW}, Barrett and Westbury used the paper by Andersen and Paradowski \cite{AP}, and showed  that the quantum $\mathfrak{sl}_2$ can be used to produce a state sum TQFT. (This general procedure is needed in the proof of unitarity. )

Wenzl \cite{wenzl} showed that, if $\mathfrak{g}$ is a simple Lie algebra and $q=e^{\frac{\pi i}{ld}}$ where $d=m\in\{1,2,3\}$ is the ratio of the square lengths of a long and a short root and $l=\varkappa$ is larger than the dual Coxeter number, then the category for $U_q \mathfrak{g}$ is unitary. (It is shown in \cite{kirillov} that the category is Hermitian. ) Note that the ratio is $1$ for Lie type $A$. The dual Coxeter number of $A_n$ is $n+1$, and $\mathfrak{sl}_2$ corresponds to $A_1$. There is a survey \cite{rowell} on the general case.

It is shown in \cite{TuVi}, Appendix G, that a unitary fusion category ${\mathcal{C}}$ gives rise to a unitary state sum TQFT.

\subsection{A Lower Bound of the Heegaard Genus}

In \cite{garoufalidis}, Theorem 2.2 gives a lower bound of Heegaard genus using a unitary TQFT.  \textbf{This theorem is not correct in general. }Note that Theorem 2.2 uses Lemma 2.1, and  the author claims that Lemma 2.1 follows from a paper by Witten \cite{witten}. In \cite{witten}, there is an implicit condition on the TQFT, namely the space associated to $S^2$ must be one-dimensional. This condition is not mentioned in \cite{garoufalidis}.

We modify this  proof in \cite{garoufalidis} to give the bound. 

Parallel to Lemma 2.1 in \cite{garoufalidis}, our unitary TQFT $Z$ must have the following three additional properties: 

(1) If $M, N$ are closed $3$-manifolds, then $Z(M\# N)Z(S^3)=Z(M)Z(N)$.  

(2) $Z(S^2\times S^1)=1$. 

(3) $0<Z(S^3)<1$. 

Assuming the space associated to $S^2$ is one-dimensional, property (1) follows from a cut-and-paste argument (see Lemma 10.2 in \cite{TuVi}). This is true at least for the state sum TQFT. Lemma 13.6 in \cite{TuVi} states that, over any commutative
ring $\mathbb{k}$, the $\mathbb{k}$-module $|S^2|_{\mathcal{C}}$ is isomorphic to $\mathbb{k}$.  (Here ${\mathcal{C}}$ is a  spherical fusion $\mathbb{k}$-category. ) 

As for property (2), section 13.1.3 in \cite{TuVi} shows that $|S^1\times S^2|_{{\mathcal{C}}}=1$. 

For property (3), $|S^{3}|_{{\mathcal{C}}}=(\operatorname{dim}({\mathcal{C}}))^{-1}$. If  ${\mathcal{C}} $ is spherical, then $\operatorname{dim}({\mathcal{C}})=\sum_{i \in I}(\operatorname{dim}(i))^{2} $, where $I$ is a (finite) representative set of simple objects of ${\mathcal{C}}$. Note that the quantum dimension is not necessarily an integer. 
Here, we consider the original Turaev--Viro TQFT \cite{TV}, at $q=e^{\pi i/r}$, $r\geq 3$. Then 
\[
|S^3|=-\frac{\left(e^{\frac{i \pi }{r}}-e^{-\frac{i \pi }{r}}\right)^2}{2 r}=\frac{2 \sin ^2\left(\frac{\pi }{r}\right)}{r}\in (0,1). 
\]

Combining the results, we obtain a lower bound of the Heegaard genus. It is worth noting that Corollary 11.7 in \cite{TVi} gives the same bound.

We use the software \textit{Regina} to search for counterexamples. The confirmed ones have  appeared in \cite{BZ}.

Here is the structure of the thesis. Chapter 2 includes necessary definitions and the construction of the category needed, described in \cite{BK}. In Chapter 3 we construct the state sum TQFT and sketch a proof of unitarity, described in \cite{TuVi} and \cite{TVi}. 
In Chapter 4 we present the estimation and the computer search.

%chap02

\section{Modular Category Associated to $U_q(\mathfrak{sl}_2)$}

\subsection{Definitions}

Our definitions are taken from \cite{TuVi}, \cite{TVi}, \cite{rowell}, and \cite{BK}. 

A \textit{monoidal category} is a category with a tensor product $\otimes$ and an
identity object $\mathbb{1}$ satisfying axioms that guarantee that the tensor product
is associative (up to specified isomorphism) and that $\mathbb{1}\otimes X\cong X\otimes \mathbb{1}\cong X$ for any object $X$.

Let  $\mathcal{C}=(\mathcal{C}, \otimes, \mathbb{1})$  and  $\mathcal{D}=(\mathcal{D}, \otimes^{\prime}, \mathbb{1}^{\prime})$  be monoidal categories. A \textit{monoidal functor} from  $\mathcal{C}$  to $ \mathcal{D}$  is a functor $ F: \mathcal{C} \rightarrow \mathcal{D}$  endowed with a morphism $ F_{0}: \mathbb{1}^{\prime} \rightarrow F(\mathbb{1})$  in  $\mathcal{D}$  and with a natural transformation
\[
F_{2}=\left\{F_{2}(X, Y): F(X) \otimes^{\prime} F(Y) \rightarrow F(X \otimes Y)\right\}_{X, Y \in \Ob(\mathcal{C})}
\]
satisfying some compatibility conditions. The monoidal functor is \textit{strong} if $F_0$ and $F_{2}(X, Y)$ are isomorphisms
for all $X, Y$.

Let $\mathbb{k}$ be a commutative ring. A $\mathbb{k}$\textit{-category} $\mathcal{C}$ is a category such that all $\Hom$ sets are left $\mathbb{k}$-modules and the composition of morphisms is $\mathbb{k}$-bilinear. 
An object $D$ of $\mathcal{C}$ is a \textit{direct sum} of objects $X_1, \dots, X_n$ if there is a family of morphisms $p_i:D\to X_i, q_i: X_i\to D$, such that 
\[
\id_D=\sum q_i p_i,\  p_i q_j=\delta_{ij}\id_{X_i}, \forall i,j. 
\]
An object $X$ of $\mathcal{C}$ is \textit{simple} if $\End_{\mathcal{C}}(X)$ is isomorphic to $\mathbb{k}$ as a $\mathbb{k}$-module. 

A \textit{monoidal $\mathbb{k}$-category} is a $\mathbb{k}$-category which is monoidal and such that monoidal product of morphisms is $\mathbb{k}$-bilinear. It is \textit{semisimple} if any object is isomorphic to a
direct sum of simple ones.

A monoidal category has \textit{(left) duality} if for any object $X$ there is an object $X^*$ (the \textit{left dual} of $X$)  and $$b_X=\coev_X: \mathbb{1}\to X\otimes X^*, \ d_X=\ev_X: X^*\otimes X\to \mathbb{1}$$
such that
\[
(\id_X\otimes d_X)(b_X\otimes \id_X)=\id_X, \ 
(d_X\otimes \id_{X^*})(\id_{X^*}\otimes b_X)=\id_{X^*}. 
\]
A \textit{left rigid category} is a
monoidal category with  a left duality. 
A \textit{rigid category} is a monoidal category which is  left
rigid and right rigid. Here a right duality is similarly defined using 
$$\widetilde{\ev}_X:X\otimes X^*\to \mathbb{1}, \quad \widetilde{\coev}_X:\mathbb{1}\to X^*\otimes X. $$
For $f\in \Hom(X,Y)$, let $$f^{*}=\left(d_{Y} \otimes \id_{X^{*}}\right)\left(\id_{Y^{*}} \otimes f \otimes \id_{X^{*}}\right)\left(\id_{Y^{*}} \otimes b_{X}\right). $$

A \textit{braiding} in a monoidal category is a collection of natural  isomorphisms $$c_{X,Y}: X\otimes Y\cong Y\otimes X$$ such that
\[
c_{X, Y \otimes Z} =\left(\id_{Y} \otimes c_{X, Z}\right)\left(c_{X, Y} \otimes \id_{Z}\right) , \quad
c_{X \otimes Y, Z} =\left(c_{X, Z} \otimes \id_{Y}\right)\left(\id_{X} \otimes c_{Y, Z}\right). 
\]
A braiding   is \textit{symmetric} if for all $X, Y$, $c_{Y,X}c_{X,Y}=\id$. In this case the category is \textit{symmetric}. 
A \textit{braided functor} between braided categories  $(\mathcal{C}, c)$  and  $(\mathcal{C}^{\prime}, c^{\prime})$  is a monoidal functor  $F: \mathcal{C} \rightarrow \mathcal{C}^{\prime}$  such that for all  $X, Y \in \operatorname{Ob}(\mathcal{C}) $,
\[
F_{2}(Y, X) c_{F(X), F(Y)}^{\prime}=F\left(c_{X, Y}\right) F_{2}(X, Y). 
\]
A
\textit{symmetric functor} is a braided monoidal functor between symmetric categories.

%here 202204281852

A \textit{twist} in a braided monoidal category is a natural family of isomorphisms $\theta_X:X\cong X$, such that 
\[
\theta_{X \otimes Y}=c_{Y, X} c_{X, Y}\left(\theta_{X} \otimes \theta_{Y}\right). 
\]

A \textit{ribbon category} is a braided monoidal category with a twist and a left duality such that $\theta_{X^*}=(\theta_X)^*$. 

A \textit{pivotal category} $\mathcal{C}$  is a rigid category with distinguished duality
such that the induced left and right dual functors coincide as monoidal functors. The \textit{left trace} of an endomorphism $f$ of an object $X$ is 
\[
\tr_{l}(f)=\ev_{X}\left(\id_{X^{*}} \otimes f\right) \widetilde{\coev}_{X} \in \End_{\mathcal{C}}(\mathbb{1}) .
\]
The \textit{right trace} of  $f$  is 
\[
\tr_{r}(f)=\widetilde{\ev}_{X}\left(f \otimes \id_{X^{*}}\right) \coev_{X} \in \End_{\mathcal{C}}(\mathbb{1}) .
\]
The \textit{left dimension} of an object $X$ is  $\dim_l(X) = \tr_l(\id_X)$. The \textit{right dimension} is  $\dim_r(X) = \tr_r(\id_X )$. 

A \textit{spherical category} is a pivotal category whose left and right traces coincide. All ribbon categories are spherical.

A \textit{pre-fusion $\mathbb{k}$-category} is a monoidal $\mathbb{k}$-category $\mathcal{C}$ such that there is a set $I$ of
simple objects of $\mathcal{C}$ such that $\mathbb{1}\in I$, $\Hom_{\mathcal{C}}(i, j) = 0$ for any distinct $i, j \in I$, and every object of $\mathcal{C}$ is a direct sum of a finite family of elements of $I$. 

A \textit{fusion $\mathbb{k}$-category} is a rigid pre-fusion $\mathbb{k}$-category which has only a finite number of isomorphism classes of simple objects. 

The \textit{dimension} of a pivotal
fusion $\mathbb{k}$-category $\mathcal{C}$ is defined by $$\dim(\mathcal{C})=\sum_{i \in I} \dim_{l}(i) \dim_{r}(i) \in \mathbb{k}. $$

Let $\mathcal{C}$ be a ribbon fusion $\mathbb{k}$-category. For any  $i, j \in I $, set
$$S_{i, j}=\tr\left(c_{j, i} c_{i, j}\right) \in \End_{\mathcal{C}}(\mathbb{1})=\mathbb{k}. $$  The category $\mathcal{C}$ is said to be \textit{modular} if this  $S$-matrix is invertible over $\mathbb{k}$. The modularity of $\mathcal{C}$ implies that $\dim(\mathcal{C})$ is invertible in $\mathbb{k}$. 

A \textit{Hermitian fusion category} is a spherical fusion category $\mathcal{C}$ over $\mathbb{C}$ 
endowed with antilinear homomorphisms
\[
\left\{f \in \Hom_{\mathcal{C}}(X, Y) \mapsto  \bar{f} \in \Hom_{\mathcal{C}}(Y, X)\right\}_{X, Y \in \operatorname{Ob}(\mathcal{C})}
\]
such that $\bar{\bar{f}}=f, \overline{gf}=\bar{f}\bar{g}, \overline{f\otimes g}=\bar{f}\otimes \bar{g}, \overline{\ev_X}=\widetilde{\coev}_X, \overline{\coev_X}=\widetilde{\ev}_X$. 

A \textit{unitary fusion category} is a Hermitian fusion category $\mathcal{C}$ such that 
$\tr(f\bar{f}) > 0$ for any non-zero morphism $f$ in $\mathcal{C}$. The dimension of
a unitary fusion category is a positive real number.

\subsection{Representations of $U_q(\mathfrak{sl}_2)$}
%Define $U_q(sl_2)$ or $U_q(\mathfrak{g})$. 
%
%Construct the category. (Using tilting modules. )
%
%Show that the category is a modular category. 
%
%Show unitarity of the category. 
%

We will focus on the case where $\mathfrak{g}=\mathfrak{sl}_2$, $q=e^{\pi i/r}$, $r\geq 3$. We follow \cite{BK}.

\begin{define}
A \textit{Hopf algebra} $A$ over a field $\mathbb{k}$ is  a bialgebra   with 
an algebra anti-isomorphism $\gamma:A\to A$, called the antipode, satisfying 
$ \mu (\id \otimes \gamma)\Delta=\epsilon=\mu(\gamma \otimes \id )\Delta$. 
\end{define}

The category of finite-dimensional representations of $A$ is a rigid monoidal 
category.

Let $\mathfrak{g}$  be a finite-dimensional simple Lie algebra over  $\mathbb{C} $,
 and $\mathfrak{h}$  be its Cartan subalgebra. 
Let $\Delta \subset \mathfrak{h}^{*}$ be   the root system, 
 $\Pi=\left\{\alpha_{1}, \dots, \alpha_{r}\right\} \subset \Delta$  be the set of simple roots,
 $h_{i}=\alpha_{i}^{\vee} \in \mathfrak{h}$  be the coroots, and 
 $A=\left(a_{i j}\right)$  be the Cartan matrix,  $a_{i j}=\left(\alpha_{i}^{\vee}, \alpha_{j}\right) $. 

Let  $P \subset \mathfrak{h}^{*}$  be the weight lattice, 
% P_{+} \subset P  - the cone of dominant integer weights,
 $Q \subset \mathfrak{h}^{*}$  be the root lattice, and 
 $Q^{\vee} \subset \mathfrak{h}$ be   the dual root lattice (coroot lattice). 
Let  $\langle\langle\cdot, \cdot\rangle\rangle$  be an invariant bilinear form on  $\mathfrak{g}$  normalized by  $\langle\langle \alpha, \alpha\rangle\rangle=2$  for short roots  $\alpha $. Then  $d_{i}:=\langle\langle\alpha_{i}, \alpha_{i}\rangle\rangle / 2 \in \mathbb{Z}_{+} $ for all  $i=1, \dots, r $. 
Let  $\mathbb{C}_{q}$   be the field  $\mathbb{C}\left(q^{1 /|P / Q|}\right) $ where  $q$  is a formal variable.

\begin{define}
The quantum group  $U_{q}(\mathfrak{g})$  is the associative algebra over  $\mathbb{C}_{q} $ with generators  $e_{i}, f_{i}(i=1, \dots, r), q^{h}\left(h \in Q^{\vee}\right)$  and relations
\[
 q^{h^{\prime}} q^{h^{\prime \prime}} =q^{h^{\prime}+h^{\prime \prime}}, q^{0}=1, \quad h^{\prime}, h^{\prime \prime} \in Q^{\vee} , 
\]
\[
 q^{h} e_{i} q^{-h} =q^{\left(h, \alpha_{i}\right)} e_{i}, q^{h} f_{i} q^{-h}=q^{-\left(h, \alpha_{i}\right)} f_{i} , 
{\left[e_{i}, f_{j}\right] } =\delta_{i j} \frac{q^{d_{i} h_{i}}-q^{-d_{i} h_{i}}}{q^{d_{i}}-q^{-d_{i}}} , 
\]
\[
\sum_{k = 0}^{1-a_{i j}}(-1)^{k}\left[\begin{array}{c}
1-a_{i j} \\
k
\end{array}\right]_{i} e_{i}^{1-a_{i j}-k} e_{j} e_{i}^{k} = 0, \quad i \neq j, 
\]
\[
\sum_{k = 0}^{1-a_{i j}}(-1)^{k}\left[\begin{array}{c}
1-a_{i j} \\
k
\end{array}\right]_{i} f_{i}^{1-a_{i j}-k} f_{j} f_{i}^{k} = 0, \quad i \neq j, 
\]
where
\[
{\left[\begin{array}{l}
n \\
k
\end{array}\right]_{i}: = \frac{[n]_{i} !}{[k]_{i} ![n-k]_{i} !}, \quad[n]_{i} !: = [1]_{i}[2]_{i} \cdots[n]_{i}, \quad[n]_{i}: = \frac{q^{d_{i} n}-q^{-d_{i} n}}{q^{d_{i}}-q^{-d_{i}}}}. 
\]
\end{define}

This is a Hopf algebra with
\[
\begin{array}{c}
\Delta\left(q^{h}\right)=q^{h} \otimes q^{h} , \\
\Delta\left(e_{i}\right)=e_{i} \otimes q^{d_{i} h_{i}}+1 \otimes e_{i} , \\
\Delta\left(f_{i}\right)=f_{i} \otimes 1+q^{-d_{i} h_{i}} \otimes f_{i} , \\
\epsilon\left(q^{h}\right)=1, \quad \epsilon\left(e_{i}\right)=\epsilon\left(f_{i}\right)=0 , \\
\gamma\left(q^{h}\right)=q^{-h}, \quad \gamma\left(e_{i}\right)=-e_{i} q^{-d_{i} h_{i}}, \quad \gamma\left(f_{i}\right)=-q^{d_{i} h_{i}} f_{i}. 
\end{array}
\]

Let  $\mathcal{A}=\mathbb{Z}\left[q^{\pm 1 /|P / Q|}\right]$  and let  $U_{q}(\mathfrak{g})_{\mathbb{Z}}$  be the  $\mathcal{A} $-subalgebra of  $U_{q}(\mathfrak{g}) $ generated by the elements
\[
 e_{i}^{(n)}=\frac{e_{i}^{n}}{[n]_{i} !}, \quad f_{i}^{(n)}=\frac{f_{i}^{n}}{[n]_{i} !}, \quad q^{h}, \quad n=1,2, \dots, i=1, \dots, r, h \in Q^{\vee}. 
\]

Fix  $\varkappa \in \mathbb{C}^{\times}$ and consider  $\mathbb{C}$  as an  $\mathcal{A} $-module via the homomorphism
\[
\mathcal{A} \rightarrow \mathbb{C}, \quad q^{a} \mapsto e^{\frac{a \pi i}{m \varkappa}}, \quad  m:=\operatorname{max}d_i. 
\]
Set
\[
U_{q}(\mathfrak{g})|_{q=e^{\frac{\pi i}{m \varkappa}}}:=U_{q}(\mathfrak{g})_{\mathbb{Z}} \otimes_{\mathcal{A}} \mathbb{C}. 
\]

We  abbreviate 
$
U_{q}(\mathfrak{g})|_{q=e^{\frac{\pi i}{m \varkappa}}}
$
to 
$
U_{q}(\mathfrak{g})
$.

The  category $\mathcal{C}(\mathfrak{g},\varkappa)$ of finite-dimensional representations of  $U_{q}(\mathfrak{g})$ over $\mathbb{C}$ possessing a weight decomposition
\[
V=\bigoplus_{\lambda \in P} V^{\lambda},\quad q^{h}|_{V^{\lambda}}=q^{(h, \lambda)} \mathrm{id}_{V^{\lambda}},  
\]
\[
e_{i}^{(n)}\left(V^{\lambda}\right) \subset V^{\lambda+n \alpha_{i}}, \quad f_{i}^{(n)}\left(V^{\lambda}\right) \subset V^{\lambda-n \alpha_{i}}, 
\]
is a ribbon category.

The \textit{Weyl modules} of $U_q(\mathfrak{sl}_2)$ are
$
V_{n}=\oplus_{i=0}^{n} \mathbb{C} v_{i},  n \in \mathbb{Z}_{+} 
$, 
where  $v_{0}$  is the highest weight vector and  $v_{i}=f^{(i)} v_{0} $. The action  is
\[
q^{h} v_{i}=q^{n-2 i} v_{i}, \quad e v_{i}=[n-i+1] v_{i-1}, \quad f v_{i}=[i+1] v_{i+1}. 
\]

The module  $V_{n}$  is irreducible for  $n<\varkappa $, and $ \operatorname{dim}_{q} V_{n}=[n+1]=0 $  if and only if  $\varkappa$  divides  $n+1 $.  Hence  for  $0 \leq n \leq \varkappa-2$, $V_{n}$  is irreducible and  $\operatorname{dim}_{q} V_{n} \neq 0 $. 
Correspondingly, let $C:=\{0,\frac{1}{2}, \dots, \frac{\varkappa-2}{2}\}$ be the  set of weights, where $\mathfrak{g}=\mathfrak{sl}_2$, $q=e^{\pi i/\varkappa}$, $\varkappa\geq 3$. From now on we index the Weyl modules by their weights in $\frac{1}{2}\mathbb{Z}$.

A module  $T$  over  $U_q(\mathfrak{sl}_2)$  is called \textit{tilting} if both  $T$  and  $T^{*}$  have composition series with factors Weyl modules. Let  $\mathcal{T}$  be the full subcategory of  $\mathcal{C}(\mathfrak{sl}_2, \varkappa)$  consisting of all tilting modules. 
If  $\lambda \in C$  then  the module  $V_{\lambda}$  is tilting. 
It is shown in \cite{AP} that the category of tilting modules  $\mathcal{T}$  is closed under  $*, \oplus, \otimes$  and direct summands. It is a ribbon category.

A tilting module  $T$  is called \textit{negligible} if  $\operatorname{tr}_{q} f=0 $ for any $ f \in \operatorname{End} T $.  A morphism  $f: T_{1} \rightarrow T_{2}$  is called \textit{negligible} if  $\operatorname{tr}_{q}(f g)=0$  for all  $g: T_{2} \rightarrow T_{1} $.

Let  ${\mathcal{C}}^{\text {int }} := {\mathcal{C}}^{\text {int }}(\mathfrak{sl}_2, \varkappa)$  be the category with objects tilting modules and morphisms
\[
\operatorname{Hom}_{{\mathcal{C}}^{\text {int }}}(V, W)=\operatorname{Hom}_{\mathcal{T}}(V, W) / \text{negligible morphisms. }
\]
Then  ${\mathcal{C}}^{\text {int }}$  is a ribbon category. 
Any object  $V$  in  ${\mathcal{C}}^{\text {int }}$  is isomorphic to  $\bigoplus_{\lambda \in C} n_{\lambda} V_{\lambda} $. 
${\mathcal{C}}^{\text {int }}$  is a semisimple abelian category. 
Indeed,   ${\mathcal{C}}^{\text {int }}$  is a modular tensor category with simple objects  $V_{\lambda}   (\lambda \in C) $.

%
%Show unitarity of the category. 
%

\subsection{Unitarity of the category}

For a survey on the general case, see \cite{rowell}.  
We will focus on the case where $\mathfrak{g}=\mathfrak{sl}_2$, $q=e^{\pi i/r}$, $r\geq 3$. 

The invertibility of $S$-matrix is shown in \cite{TW}. Their work is based on the notion of quasimodular Hopf algebra, which is a weakened version of modular Hopf algebra introduced in the paper \cite{RT}.

In \cite{kirillov}, Hermitian structure on the category is defined in Section 4. Note that the notation in \cite{kirillov} is different. 
In \cite{wenzl}, positivity is proved by Wenzl. Xu \cite{xu} independently showed some of the cases covered by Wenzl.

\begin{thm}[\cite{wenzl}, \cite{xu}]
The categories  ${\mathcal{C}}^{\text {int }}(\mathfrak{sl}_2, \varkappa)$  are unitary when    $q=e^{\pi i / \varkappa} $. 
\end{thm}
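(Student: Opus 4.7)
The categorical framework is already in place from the preceding subsection: $\mathcal{C}^{\text{int}}(\mathfrak{sl}_2,\varkappa)$ is a ribbon fusion $\mathbb{C}$-category with simple objects $V_\lambda$ for $\lambda\in C$, and Kirillov's construction equips it with antilinear homomorphisms $f\mapsto\bar f$ satisfying the Hermitian axioms $\bar{\bar f}=f$, $\overline{gf}=\bar f\bar g$, $\overline{f\otimes g}=\bar f\otimes\bar g$, and $\overline{\ev_X}=\widetilde{\coev}_X$, $\overline{\coev_X}=\widetilde{\ev}_X$. The entire content of the theorem is therefore the positivity condition $\tr(f\bar f)>0$ for every nonzero $f$, and the plan is to reduce this to a concrete computation in the Temperley--Lieb algebra and then to invoke Wenzl's/Jones' positivity result at roots of unity.

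The first reduction is to use semisimplicity. Every object decomposes as $\bigoplus_\lambda n_\lambda V_\lambda$, so by bilinearity and the vanishing of $\Hom(V_\lambda,V_\mu)$ for $\lambda\ne\mu$ it is enough to check positivity on $\End(V_\lambda)$ for each simple and on $\Hom(V_{\lambda_1}\otimes\cdots\otimes V_{\lambda_n},V_\mu)$ (whose positivity controls the full matrix-algebra structure of endomorphism algebras of tensor products). For $\End(V_\lambda)=\mathbb{C}\cdot\id$, the identity $\overline{\id}=\id$ follows from the axioms (it is an idempotent in $\mathbb{C}$ squaring to $\id$), so $\tr(\id\,\overline{\id})=\dim_q V_\lambda=[2\lambda+1]$; and at $q=e^{\pi i/\varkappa}$ one checks directly that
\[
[n+1]=\frac{\sin\bigl((n+1)\pi/\varkappa\bigr)}{\sin(\pi/\varkappa)}>0\quad\text{for }0\le n\le\varkappa-2.
\]
Thus all quantum dimensions of simples are strictly positive.

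The harder step is positivity on the multi-dimensional Hom spaces. The plan is to identify $\End_{\mathcal{C}^{\text{int}}}(V_{1/2}^{\otimes n})$ with (a quotient of) the Temperley--Lieb algebra $TL_n(\delta)$ at $\delta=[2]=2\cos(\pi/\varkappa)$, with $\bar{\,\cdot\,}$ corresponding to the usual diagrammatic adjoint and $\tr$ to the Markov trace normalized by $\delta$. Jones' classical analysis of $TL_n$ at $\delta=2\cos(\pi/\varkappa)$, refined by Wenzl in the general Lie-type setting, shows that the Markov trace is positive semidefinite, that its kernel is spanned exactly by the negligible elements, and that the Jones--Wenzl idempotents $p_0,\ldots,p_{\varkappa-2}$ survive as nonzero orthogonal projections in the quotient. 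By the fusion/Clebsch--Gordan rules for $\mathcal{C}^{\text{int}}(\mathfrak{sl}_2,\varkappa)$, every simple $V_\lambda$ appears in some $V_{1/2}^{\otimes n}$, and every Hom space between simples embeds isometrically (with respect to $\tr(f\bar g)$) into $\End(V_{1/2}^{\otimes n})$. Positivity on the ambient space therefore descends to positivity on every Hom space, yielding $\tr(f\bar f)>0$ for all nonzero $f$.

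The main obstacle is the compatibility of two a priori distinct Hermitian structures: Kirillov's abstractly defined $\bar{\,\cdot\,}$ on $\mathcal{C}^{\text{int}}$ and the diagrammatic/Markov-trace adjoint on $TL_n$. One needs to verify, using the concrete description of the tilting-module category in terms of Weyl modules and Jones--Wenzl projectors (as developed by Andersen--Paradowski and used by Barrett--Westbury), that the two involutions agree on generators of the Hom spaces, and that passing to the quotient by negligible morphisms kills exactly the radical of the positive semidefinite form $\tr(f\bar g)$. This matching, together with Wenzl's positivity theorem for the Markov trace at $\delta=2\cos(\pi/\varkappa)$, completes the proof.
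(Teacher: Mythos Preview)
The paper does not supply a proof of this theorem at all: it is stated purely as a citation to Wenzl and Xu, preceded only by the remarks that Kirillov defined the Hermitian structure and that Wenzl established positivity. There is therefore no argument in the paper to compare your proposal against.

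That said, your outline is a reasonable route to the $\mathfrak{sl}_2$ case, and it is closer in spirit to Jones' analysis of the Temperley--Lieb algebras than to Wenzl's actual proof, which treats all simple Lie algebras uniformly via path algebras, Hecke-type quotients, and specialization from generic $q$. For $\mathfrak{sl}_2$ your reduction to $TL_n(2\cos(\pi/\varkappa))$ is the natural simplification, and the positivity of the Markov trace modulo its radical (Jones, or Goodman--de la Harpe--Jones) is exactly the input needed.

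The gap you yourself flag is genuine and is where the real work lies. Kirillov's conjugation $f\mapsto\bar f$ is defined via a $*$-structure on $U_q(\mathfrak{sl}_2)$ together with chosen Hermitian forms on the modules, not diagrammatically; to run your argument you must verify on generators that this agrees with the diagrammatic adjoint on $TL_n$ (reflection of diagrams), i.e.\ that $\overline{e_i}=e_i$ for the cup-cap idempotents, and that the categorical trace matches the Markov trace up to the positive normalization $[2\lambda+1]$. This is a concrete computation with the $R$-matrix and the (co)evaluation maps rather than something formal. Your embedding claim also needs a little care: one realizes $V_\lambda$ inside $V_{1/2}^{\otimes n}$ via the Jones--Wenzl projector $p_{2\lambda}$, and the isometric embedding of $\Hom$ spaces is $f\mapsto \iota\circ f\circ\pi$ with $\iota,\pi$ the inclusion/projection; compatibility with $\tr$ and $\bar{\,\cdot\,}$ must be checked, which again reduces to $\overline{p_{2\lambda}}=p_{2\lambda}$ and the positivity of $[2\lambda+1]$ that you already established. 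Once those verifications are in place, your argument goes through.
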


%chap03

\section{Turaev--Viro TQFT}

%
%To be written. 
%
%Axioms of TQFT. Define unitary TQFT. Briefly describe the original version of Reshetikhin--Turaev TQFT and the Turaev--Viro TQFT. 
%

\subsection{TQFT}

Manifolds considered are assumed to be oriented. 

An \textit{$n$-dimensional cobordism} is a quadruple $(M, \Sigma_0, \Sigma_1, h)$, where $M$ is a compact oriented $n$-manifold, $\Sigma_0, \Sigma_1$ are closed oriented $(n-1)$-manifolds, and $h:(-\Sigma_0)\sqcup \Sigma_1 \to \partial M$ is an orientation preserving homeomorphism. 
Two cobordisms  $\left(M, \Sigma_{0}, \Sigma_{1}, h\right)$  and  $\left(M^{\prime}, \Sigma_{0}, \Sigma_{1}, h^{\prime}\right)$  between  $\Sigma_{0}$  and  $\Sigma_{1}$  are \textit{homeomorphic} if there is an orientation preserving homeomorphism  $\alpha: M \rightarrow M^{\prime}$  such that  $h^{\prime}=\alpha h $. 

Let $\cob $ be the category with objects closed oriented $2$-manifolds and morphisms cobordisms between them considered up to homeomorphisms. It is a symmetric monoidal category,  where the
monoidal product  is the ordered disjoint union of manifolds and cobordisms.   Each closed oriented $3$-manifold represents a morphism $\emptyset\to \emptyset$.

Let $\Mod_{\mathbb{k}}$ be  the symmetric monoidal category of 
$\mathbb{k}$-modules. 

\begin{define}
A $3$-dimensional \textit{Topological Quantum Field Theory} (TQFT) is a
symmetric strong monoidal functor
\[
Z:\cob \to \Mod_{\mathbb{k}}. 
\]
\end{define}

A \textit{conjugation} in a monoidal category $\mathcal{C}$ is a family of maps
\[
 \left\{f \in \operatorname{Hom}_{\mathcal{C}}(X, Y) \rightarrow \bar{f} \in \operatorname{Hom}_{\mathcal{C}}(Y, X)\right\}_{X, Y \in \operatorname{Ob}(\mathcal{C})} 
\]
compatible with  the associativity and unitality constraints of  $\mathcal{C} $, 
such that for all morphisms  $f, g$  in  $\mathcal{C}$  and all  $X \in \operatorname{Ob}(\mathcal{C}) $
\[
\begin{array}{cccc}
\overline{g f}=\bar{f} \bar{g}, & \overline{\id_{X}}=\id_{X}, & \bar{\bar{f}}=f, & \overline{f \otimes g}=\bar{f} \otimes \bar{g}.  \\
%\overline{\mathbb{1}}=\mathbb{1}, & \overline{a_{X, Y, Z}}=a_{X, Y, Z}^{-1}, & \overline{l_{X}}=l_{X}^{-1}, & \overline{r_{X}}=r_{X}^{-1}, 
\end{array}
\]

The category $\cob$ has a conjugation
\[
\left(M, \Sigma_{0}, \Sigma_{1}, h:\left(-\Sigma_{0}\right) \sqcup \Sigma_{1} \rightarrow \partial M\right)
\]
\[
\mapsto
\left(-M, \Sigma_{1}, \Sigma_{0}, h P:\left(-\Sigma_{1}\right) \sqcup \Sigma_{0} \rightarrow-\partial M=\partial(-M)\right),
\]
where  $P$  is the orientation reversing permutation homeomorphism
\[
\left(-\Sigma_{1}\right) \sqcup \Sigma_{0} \simeq\left(-\Sigma_{0}\right) \sqcup \Sigma_{1} .
\]

Let $\Hilb$ be the category of finite-dimensional Hilbert spaces over $\mathbb{C}$ and $\mathbb{C}$-linear
homomorphisms between them. This is a symmetric ribbon $\mathbb{C}$-category. The conjugation is given by assigning to every morphism its hermitian
adjoint.

\begin{define}
A \textit{unitary $3$-dimensional TQFT} is a symmetric strong
monoidal functor 
\[
Z : \cob \to \Hilb
\]
commuting with the conjugations in $\cob$ and 
in $\Hilb$.  
\end{define}

Note that there is a forgetful functor $\Hilb\to \Mod_{\mathbb{C}}$. 
Composing such a functor $Z$ with the 
forgetful functor, we obtain a $3$-dimensional TQFT. We call $Z$ a \textit{unitary lift} of the latter TQFT.

The category  $\cob$  has a subcategory isomorphic to  the category of closed oriented surfaces and isotopy classes of orientation-preserving homeomorphisms. Any such homeomorphism  $f: \Sigma \rightarrow \Sigma^{\prime}$  determines a morphism   $$\left(C=\Sigma^{\prime} \times[0,1], \Sigma , \Sigma^{\prime}, h:(-\Sigma) \sqcup \Sigma^{\prime} \simeq \partial C\right) $$ in $\cob$, where  $h(x)=(f(x), 0)$  for  $x \in \Sigma$  and  $h\left(x^{\prime}\right)=\left(x^{\prime}, 1\right)$  for  $x^{\prime} \in \Sigma^{\prime} $. Restricting a TQFT  $Z: \cob \rightarrow  \Mod_{\mathbb{k}}$  to this subcategory, we obtain the action of homeomorphisms, and in particular the action of mapping class groups. We denote the induced map by $f_\#:Z(\Sigma) \rightarrow Z(\Sigma^{\prime})$. It follows that $(f^{-1})_\#=(f_\#)^{-1}$. 

If, in addition, $Z$ is unitary, then 
\begin{prop}
$\langle f_\#x,f_\# y \rangle=\langle x,y \rangle$. 
\end{prop}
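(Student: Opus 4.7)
The plan is to show that $f_\#$ is unitary, from which the inner-product identity follows formally via $\langle f_\# x, f_\# y\rangle = \langle x, (f_\#)^{*} f_\# y\rangle = \langle x, y\rangle$. Since the paragraph preceding the proposition records $(f_\#)^{-1} = (f^{-1})_\#$, it suffices to prove $(f_\#)^{*} = (f^{-1})_\#$. The hypothesis that $Z$ commutes with conjugations gives $(f_\#)^{*} = \overline{Z(C_f)} = Z(\overline{C_f})$, where $C_f$ denotes the mapping cylinder cobordism attached to $f$. The whole problem therefore reduces to identifying the conjugate cobordism $\overline{C_f}$ with the mapping cylinder $C_{f^{-1}}$ in $\cob$.

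To make this identification, write $C_f = (\Sigma' \times [0,1], \Sigma, \Sigma', h_f)$ with $h_f(x) = (f(x), 0)$ for $x\in\Sigma$ and $h_f(x') = (x', 1)$ for $x'\in\Sigma'$. Unwinding the definition of conjugation in $\cob$ given earlier in the excerpt yields $\overline{C_f} = (-\Sigma' \times [0,1], \Sigma', \Sigma, h_f P)$. I would then propose the explicit homeomorphism $\alpha\colon -\Sigma' \times [0,1] \to \Sigma \times [0,1]$ defined by $\alpha(x', t) = (f^{-1}(x'), 1-t)$. The factor $(x', t) \mapsto (x', 1-t)$ reverses orientation on $\Sigma' \times [0,1]$, so it is orientation-preserving when viewed as a map out of $-\Sigma' \times [0,1]$; and $f^{-1}$ is orientation-preserving because $f$ is. Hence $\alpha$ is orientation-preserving. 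A direct check on both boundary components, tracking where the swap $P$ sends each summand, shows $\alpha \circ h_f \circ P = h_{f^{-1}}$, so $\alpha$ realizes the required equivalence of cobordisms $\overline{C_f} \cong C_{f^{-1}}$.

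Assembling these ingredients gives $(f_\#)^{*} = Z(\overline{C_f}) = Z(C_{f^{-1}}) = (f^{-1})_\# = (f_\#)^{-1}$, so $f_\#$ is a unitary operator between the Hilbert spaces $Z(\Sigma)$ and $Z(\Sigma')$, which is exactly the claim.

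The main obstacle is the bookkeeping in the second paragraph: keeping straight which boundary component of $C_f$ is incoming and which is outgoing, tracking how the orientation-reversing permutation $P$ acts on the labelled disjoint union $(-\Sigma')\sqcup \Sigma$, and verifying that the candidate $\alpha$ genuinely intertwines the parameterizations $h_f P$ and $h_{f^{-1}}$. Everything else is a short formal consequence of functoriality of $Z$ and of its compatibility with conjugation.
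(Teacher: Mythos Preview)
Your proposal is correct and follows essentially the same route as the paper: both argue that $(f_\#)^*=(f^{-1})_\#=(f_\#)^{-1}$ by identifying the conjugate cobordism $\overline{C_f}$ with the mapping cylinder $C_{f^{-1}}$. The paper merely asserts the existence of this homeomorphism, whereas you supply it explicitly as $\alpha(x',t)=(f^{-1}(x'),1-t)$ and carry out the boundary check; this is more detailed but not a different idea.
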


\proof{The adjoint of $f_\#$ is the same as the inverse of $f_\#$. Indeed there is a homeomorphism from the conjugation of $\left(C=\Sigma^{\prime} \times[0,1], \Sigma , \Sigma^{\prime}, h:(-\Sigma) \sqcup \Sigma^{\prime} \simeq \partial C\right) $ to the cobordism associated to $f^{-1}$. 
\qedhere}

Theorem 10.3.1 in \cite{Tu} is a similar result.

\subsection{Construction of Turaev--Viro state sum  TQFT}

%
%
%To be written. 
%The construction using a category. 
%Described in \cite{TuVi}. 
%

We sketch a construction following \cite{TuVi} and \cite{TVi}. 

Fix a  spherical fusion $\mathbb{k}$-category $\mathcal{C}$ of invertible dimension. Fix a (finite) representative set $I$ of simple objects of $\mathcal{C}$.

\subsubsection{Colored graphs in surfaces}

A \textit{graph} $G$ in an oriented surface $\Sigma$ is  a finite graph without
isolated vertices embedded in $\Sigma$. A graph $G$ is \textit{$\mathcal{C}$-colored}, if each edge of $G$ is oriented and endowed with an object
of $\mathcal{C}$ called the \textit{color} of the edge.

For a vertex $v$ of a $\mathcal{C}$-colored graph $G$, the set $E_v$ of half-edges of $G$ incident to $v$ with cyclic order induced by the opposite orientation of $\Sigma$, and the color $c=c_v: E_v\to \Ob(\mathcal{C})$, the orientation $\epsilon=\epsilon_v:E_v\to \{+, -\}$, together form a \textit{cyclic $\mathcal{C}$-set}. The sign is $+$ if $e\in E_v$ is oriented
towards $v$. 

Starting from $e=e_1\in E_v$, let
\[
H_{e}=\operatorname{Hom}_{\mathcal{C}}\left(\mathbb{1}, c\left(e_{1}\right)^{\epsilon\left(e_{1}\right)} \otimes \cdots \otimes c\left(e_{n}\right)^{\epsilon\left(e_{n}\right)}\right), 
\]
where $X^+=X$, $X^-=X^*$. 

%For $e,f\in E_v$, there is a bijection $p_{e, f}: H_{e} \rightarrow H_{f}$, given by 
%\[
%\sigma_{X, Y}: \operatorname{Hom}_{\mathcal{C}}(\mathbb{1}, X \otimes Y) \rightarrow \operatorname{Hom}_{\mathcal{C}}(\mathbb{1}, Y \otimes X), 
%\]
%\[
%\sigma_{X, Y}(\alpha)=\left(\mathrm{ev}_{X} \otimes \mathrm{id}_{Y \otimes X}\right)\left(\mathrm{id}_{X^*} \otimes \alpha \otimes \mathrm{id}_{X}\right) \widetilde{\operatorname{coev}}_{X}, 
%\]
%for suitable $X,Y$. 

%
%The family  $\left(\left\{H_{e}\right\}_{e \in E_v},\left\{p_{e, f}\right\}_{e, f \in E_v}\right)$  is a projective system of  $\mathbb{k} $-modules and  $\mathbb{k} $-linear isomorphisms. 

This module can be made into the \textit{symmetrized multiplicity module}  $H(E_v)$,   independent of the starting point chosen.  
%The  \textit{symmetrized multiplicity module}  $H(E_v)$  is the projective limit of this system:
%\[
%H(E)=\lim_{\longleftarrow} H_{e}. 
%\]

Set  $H_{v}(G)=H\left(E_{v}\right)$  and  
\[
H(G)=\otimes_{v} H_{v}(G). 
\]

Orient the plane  $\mathbb{R}^{2}$  counterclockwise. Let  $G$  be a  $\mathcal{C} $-colored graph in  $\mathbb{R}^{2} $. 
%For each vertex  $v$  of  $G $, we choose a half-edge  $e_{v} \in E_{v}$  and isotope $ G $ near  $v$  so that the half-edges incident to  $v $ lie above  $v$    and  $e_{v}$  is the leftmost of them. 
Pick any  $\alpha_{v} \in H_{v}(G)$  and replace  $v$  by a box colored with the corresponding element in the $\Hom $ set. This transforms  $G$  into a planar diagram which determines, by the graphical calculus, an element $\mathbb{F}_{\mathcal{C}}(G)\left(\otimes_{v} \alpha_{v}\right)$  of  $\operatorname{End}_{\mathcal{C}}(\mathbb{1})=\mathbb{k}$. See \cite{Tu}, \cite{TuVi} or \cite{TVi} for an introduction to the graphical calculus. 
This procedure defines a vector  
\[
\mathbb{F}_{\mathcal{C}}(G) \in   H(G)^{\star}=\operatorname{Hom}_{\mathbb{k}}(H(G), \mathbb{k}). 
\]

As  $\mathcal{C}$ is spherical,  the invariant $\mathbb{F}_{\mathcal{C}}$  generalizes to graphs in the $2$-sphere.

\subsubsection{State sums on skeletons of $3$-manifolds}

A \textit{$2$-polyhedron} is  a compact topological space that can be triangulated using a finite number of simplices of dimension  $\leq 2$  so that all $0 $-simplices and $1$-simplices are faces of $2 $-simplices. For a $2$-polyhedron  $P$,   let   $\operatorname{Int}(P)$ be  the subspace  consisting of all points having a neighborhood homeomorphic to  $\mathbb{R}^{2} $.  

A \textit{stratification} of a $2$-polyhedron  $P$  is a graph $ G$  embedded in  $P$  so that  $P \backslash \operatorname{Int}(P) \subset G $. The vertices and edges of $P^{(1)}:=G$ are called respectively the vertices and edges of $P$.

Cutting a stratified $2$-polyhedron  $P$  along the graph  $P^{(1)} \subset P $, we obtain a compact surface  $\tilde{P}$  with interior  $P \backslash P^{(1)} $.  The (connected) components of  $\tilde{P}$  are called the \textit{regions} of  $P $.  We let  $\operatorname{Reg}(P)$  be the (finite) set of all regions of  $P $. 
%We say that a region  $ r$  of  $P$  embeds in  P  if the map  \left.p\right|_{\partial r}: \partial r \rightarrow P^{(1)}  is injective.

A \textit{branch} of a stratified $2$-polyhedron  $P$  at a vertex  $x$    is a germ at  $x$  of an adjacent region.  A \textit{branch} of  $P$  at an edge  $e$    is a germ at  $e$  of an adjacent region.  The set of branches of  $P$  at  $e$  is denoted  $P_{e} $. The number of elements of  $P_{e}$  is called the \textit{valence} of  $e $.

The edges of  $P$  of valence $1$ and their vertices form a graph called the \textit{boundary} of  $P$  and denoted  $\partial P $. We say that  $P$  is \textit{orientable (resp. oriented)} if all regions of  $P$  are orientable (resp. oriented).

%∂-cylindrical

A \textit{skeleton} of a closed $3$-manifold  $M$  is an oriented stratified $2$-polyhedron  $P \subset M$  such that  $\partial P=\emptyset$  and  $M \backslash P$  is a disjoint union of open $3$-balls. 

Any vertex  $x$  of a skeleton  $P \subset M$  has a closed ball neighborhood  $B_{x} \subset M$  such that  $\Gamma_{x}=P \cap \partial B_{x}$  is a finite non-empty graph and  $P \cap B_{x}$  is the cone over  $\Gamma_{x} $.  The pair  $\left(\partial B_{x}, \Gamma_{x}\right)$   is the \textit{link} of  $x$  in  $(M, P) $. If  $M$  is oriented, then we endow  $\partial B_{x}$  with orientation induced by that of  $M$  restricted to  $M \backslash \operatorname{Int}\left(B_{x}\right) $.

Let $M$ be a closed oriented $3$-manifold. Fix a skeleton $P$ of $M$. 
For a map  $c: \operatorname{Reg}(P) \rightarrow I $ and an oriented edge  $e$  of  $P$,  we define a  $\mathbb{k} $-module  $$H_{c}(e)=H\left(P_{e}\right),  $$ where  $P_{e}$  is the cyclic  $\mathcal{C} $-set of branches of  $P$  at  $e$.  If  $e^{\mathrm{op}}$  is the same edge with opposite orientation, then  there is  a contraction  $$*_{e}: H_{c}(e)^{\star} \otimes H_{c}\left(e^{\mathrm{op}}\right)^{\star} \rightarrow \mathbb{k}. $$

The link of a vertex  $x \in P$  determines a  $\mathcal{C} $-colored graph  $\Gamma_{x}$  in  $\partial B_{x} \simeq S^{2} $. 
Hence there is  a tensor  $\mathbb{F}_{\mathcal{C}}\left(\Gamma_{x}\right) \in H_{c}\left(\Gamma_{x}\right)^{\star}$. Note that  $ H_{c}\left(\Gamma_{x}\right)=\otimes_{e} H_{c}(e) $, where  $e$  runs over all edges of  $P$  incident to  $x$  and oriented away from  $x$. 
Hence   $\otimes_{x} \mathbb{F}_{\mathcal{C}}\left(\Gamma_{x}\right)  \in \otimes_{e} H_{c}(e)^{\star} $, where  $e$  runs over all oriented edges of  $P $. Set  $*_{P}=\otimes_{e} *_{e}: \otimes_{e} H_{c}(e)^{\star} \rightarrow \mathbb{k} $.

Define 
\[
|M|_{\mathcal{C}}=(\operatorname{dim}(\mathcal{C}))^{-|P|} \sum_{c}\left(\prod_{r \in \operatorname{Reg}(P)}(\operatorname{dim} c(r))^{\chi(r)}\right) *_{P}\left(\otimes_{x} \mathbb{F}_{\mathcal{C}}\left(\Gamma_{x}\right)\right) \in \mathbb{k}, 
\]
where  $|P|$  is the number of components of $ M \backslash P $, $c$ runs over all maps  $\operatorname{Reg}(P) \rightarrow I $, and  $\chi(r)$  is the Euler characteristic of  $r$.

\begin{thm}[\cite{TVi}, \cite{TuVi}]
The invariant $|M|_{\mathcal{C}}$ is independent of $P$ chosen. 
\end{thm}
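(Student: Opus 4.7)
The plan is to reduce skeleton-independence to checking invariance of the state sum under a finite list of elementary local moves connecting any two skeletons. By a theorem of Matveev (with refinements by Piergallini), any two skeletons of a closed oriented $3$-manifold are related by a finite sequence of two local moves: the \emph{bubble (lune) move}, which introduces a small $2$-disk bounded by a new circular edge attached along an existing face, and the \emph{$T$-move} (or $2$-$3$ move), which replaces two vertices joined by a single edge by three vertices arranged into a triangular configuration. Independence then reduces to showing that the quantity $|M|_\mathcal{C}$ takes the same value on the two sides of each move, which is a purely local computation inside the spherical fusion category $\mathcal{C}$.

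First I would verify invariance under the bubble move. This move increases $|P|$ by $1$, introduces one new edge $e$, and creates one new disk region whose Euler characteristic is explicit. Summing over the color $k \in I$ assigned to the new edge and performing the resulting local contraction, the state sum change collapses via the identity $\sum_{k \in I} (\dim k)^2 = \dim(\mathcal{C})$, which is the definition of the categorical dimension. The shift $(\dim(\mathcal{C}))^{-|P|} \to (\dim(\mathcal{C}))^{-|P|-1}$ in the normalizing prefactor is precisely designed to absorb this.

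Next I would verify invariance under the $T$-move. This move preserves $|P|$ and the contributions $\prod_r (\dim c(r))^{\chi(r)}$ from regions away from the affected patch, so only the local tensor contraction $*_P(\otimes_x \mathbb{F}_\mathcal{C}(\Gamma_x))$ at the two vertices on one side versus the three vertices on the other needs to be compared. After summing over the colors of the internal edges, this comparison is exactly the pentagon axiom in $\mathcal{C}$, equivalently the Biedenharn--Elliott (pentagon) identity for the associator $6j$-symbols expanded in the basis $I$. Sphericity is essential here: it ensures that the graphical invariant $\mathbb{F}_\mathcal{C}$ descends to graphs in $S^2$ without a preferred outer face, that left and right traces agree, and that the symmetrized multiplicity modules $H_v(G)$ are canonical and compatible with cyclic reordering of the half-edges at a vertex.

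The main obstacle is not any single categorical identity but the combinatorial bookkeeping: carefully tracking how $|P|$, the Euler characteristics $\chi(r)$ of the regions altered by each move, the edge orientations, and the choices of starting half-edge at each vertex shift under the moves, and then verifying that these shifts combine with the local pentagon or completeness identity to give exact equality independent of all auxiliary choices. Invariance under change of the starting half-edge at a vertex, and under simultaneous reversal of an edge orientation together with dualization of its color, are separate preliminary lemmas I would record before running the move analysis. The Matveev--Piergallini presentation itself I would take as input from \cite{TuVi}.
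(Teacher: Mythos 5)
The paper does not actually prove this theorem: it quotes it from \cite{TVi} and \cite{TuVi}, and the proof there is exactly the strategy you outline --- reduce skeleton-independence to invariance under a finite list of local moves, absorb the move that creates a new complementary ball using the completeness relation $\sum_{i\in I}(\dim i)^{2}=\dim(\mathcal{C})$ against the prefactor $(\dim(\mathcal{C}))^{-|P|}$, and match the $2$--$3$ move with the Biedenharn--Elliott (pentagon) identity, after preliminary lemmas on cyclic symmetry and edge reversal. So your overall plan is the correct and standard one.

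One concrete imprecision to fix: the two-move Matveev--Piergallini calculus (lune move plus $T$-move, supplemented by the bubble move to change the number of complementary balls) is a statement about \emph{special} spines, whereas the skeletons defined in this paper are general oriented stratified $2$-polyhedra: regions need not be disks (this is precisely why the state sum carries the exponents $\chi(r)$), the stratifying graph $P^{(1)}$ may contain edges that are not genuine singularities of $P$, and $M\setminus P$ may have several components. Turaev and Virelizier therefore work with a larger list of primary moves; besides the bubble and $2$--$3$ moves one needs, for instance, a phantom-edge move deleting an edge of valence $2$ whose two adjacent branches form a locally flat plane, and a move cutting a region along an arc, which changes the Euler characteristics $\chi(r)$ and must be checked to be compensated by the dimension factors. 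If you want to invoke only the two-move calculus, you must first show that every skeleton can be converted into a special one by moves under which the state sum is already known to be invariant; otherwise enlarge the move set and verify the corresponding (easy but not vacuous) local identities. With that amendment your argument goes through.
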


%TQFT

\subsubsection{The state sum TQFT}

Let  $M$  be a compact $3 $-manifold with boundary. Let  $G$  be an oriented graph in  $\partial M$  such that all vertices of  $G$  have valence  $\geq 2 $.  A \textit{skeleton}  of the pair  $(M, G)$  is an oriented stratified $2$-polyhedron  $P \subset M$  satisfying certain requirements including $P \cap \partial M=\partial P=G $. 

%
%such that
%
%(i) $P \cap \partial M=\partial P=G $; 
%
%(ii) every vertex  $v$  of  $G$  is an endpoint of a unique edge  $d_{v}$  of  $P$  not contained in  $\partial M $; moreover,  $d_{v} \cap \partial M=\{v\}$  and  $d_{v}$  is not a loop; 
%
%(iii) every edge  $a$  of  $G$  is an edge of  $P$  of valence $1 $; the only region  $D_{a}$  of  $P$  adjacent to  $a$  is a closed $2$-disk,  $D_{a} \cap \partial M=a $, and the orientation of  $D_{a}$  is compatible with that of  $a$; 
%
%(iv) $ M \backslash P $ is a disjoint union of a finite collection of open $3 $-balls and a $3$-manifold homeomorphic to  $(\partial M \backslash G) \times[0,1)$  through a homeomorphism extending the identity map. 
%

%
%An  \textit{$I$-colored graph} in a surface is  a  $\mathcal{C} $-colored graph such that the colors of all edges belong to  $I$  and all vertices have valence  $\geq 2 $. 

For any compact oriented $3$-manifold  $M $ and any  $I $-colored graph  $G$  in  $\partial M $, we define a topological invariant  $|M, G| \in \mathbb{k}$  as follows. Pick a skeleton  $P \subset M $ of the pair  $(M, G) $. Pick a  map $c: \operatorname{Reg}(P) \rightarrow I$  extending the coloring of  $G$. 

Let  $E_{0}$  be the set of oriented edges of $ P$  with both endpoints in  $\operatorname{Int}(M) $, and let $ E_{\partial}$  be the set of edges of $ P$  with exactly one endpoint in $ \partial M $  oriented towards this endpoint. We have
\[
\otimes_{e \in E_{\partial}} H_{c}(e)^{\star}=\otimes_{v} H_{v}\left(G^{\mathrm{op}} ;-\partial M\right)^{\star}=H\left(G^{\mathrm{op}} ;-\partial M\right)^{\star} , 
\]
where $G^{\text {op }}$  in  $-\Sigma$ is   obtained by reversing orientation in all edges of  $G$  and in  $\Sigma$. 

There is a contraction homomorphism
\[
*_{P}: \otimes_{e \in E_{0} \cup E_{\partial}} H_{c}(e)^{\star} \rightarrow \otimes_{e \in E_{\partial}} H_{c}(e)^{\star}=H\left(G^{\mathrm{op}} ;-\partial M\right)^{\star} .
\]

The tensor product  $\otimes_{x} \mathbb{F}_{\mathcal{C}}\left(\Gamma_{x}\right)$  over all vertices  $x$  of  $P$  lying in  $\operatorname{Int}(M)$  is a vector in  $\otimes_{e \in E_{0} \cup E_{\partial}} H_{c}(e)^{\star} $.

Set
\[
|M, G|=(\operatorname{dim}(\mathcal{C}))^{-|P|} \sum_{c}\left(\prod_{r \in \operatorname{Reg}(P)}(\operatorname{dim} c(r))^{\chi(r)}\right) *_{P}\left(\otimes_{x} \mathbb{F}_{\mathcal{C}}\left(\Gamma_{x}\right)\right), 
\]
where  $|P|$  is the number of components of  $M \backslash P $, $c$ runs over all maps  $\operatorname{Reg}(P) \rightarrow I$  extending the coloring of  $G $. 

\begin{thm}[\cite{TVi}, \cite{TuVi}]
$ |M, G| \in   H\left(G^{\mathrm{op}} ;-\partial M\right)^{\star}$  does not depend on the choice of  $P $. 
\end{thm}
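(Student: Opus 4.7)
The plan is to mirror the proof strategy of the preceding closed-manifold theorem by reducing skeleton independence to invariance under a finite list of local moves performed away from the boundary. First I would establish a relative Matveev--Piergallini theorem: any two skeletons $P, P'$ of $(M, G)$ with $\partial P = \partial P' = G$ are related by an ambient isotopy fixing $G$ pointwise, followed by a finite sequence of local moves supported in $\operatorname{Int}(M)$. The relevant moves are the $2$-$3$ (Matveev) move, the lune move, and the bubble move. A convenient way to set this up is to collar $\partial M$ and arrange that near $G$ both skeletons have a fixed product structure, reducing the question of relating them to the absolute Matveev--Piergallini theorem applied inside $M \setminus (\text{collar})$.

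Having this, I would verify invariance of $|M,G|$ under each local move. Crucially, because every move is supported in $\operatorname{Int}(M)$, neither the set $E_\partial$ of boundary-adjacent edges nor the module $H(G^{\mathrm{op}}; -\partial M)^\star$ in which $|M,G|$ lives is changed; the only effect of a move is to replace the interior factor $\otimes_x \mathbb{F}_{\mathcal{C}}(\Gamma_x)$ and the exponents $|P|$ and $\chi(r)$ by new ones, and then the state sum is contracted by $*_P$ into the same target module. It therefore suffices to check the equality as a sum of interior contributions, exactly as in the closed case. The $2$-$3$ move reduces to the Biedenharn--Elliott (pentagon) identity for the $6j$-symbols of $\mathcal{C}$; the lune move reduces to the orthogonality relation for $6j$-symbols together with semisimplicity; and the bubble move's extra factor of $\dim(\mathcal{C})$ is absorbed by the change in $|P|$ and by the Euler characteristic weight attached to the newly created disk region.

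The main obstacle will be the relative Matveev--Piergallini step. The delicate point is arranging that the connecting moves genuinely never disturb the combinatorics of $P$ near $\partial M$ and in particular leave $G$ untouched. Two approaches are available. The first is to double $M$ along $\partial M$, extend $P$ symmetrically to a skeleton of the closed double, and reduce to the closed-manifold invariance already proved, while carefully tracking how the boundary contraction survives the doubling. The second, followed in \cite{TuVi} and \cite{TVi}, is to extend Matveev's argument directly to stratified $2$-polyhedra with prescribed boundary, using only moves realized in a fixed interior neighborhood of $P$. Either route is technical but standard; once it is in hand, the algebraic verification of step two is formally identical to the closed case, with the boundary contraction $*_P$ simply passing through the local identities.
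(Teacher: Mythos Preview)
The paper does not prove this theorem; it is quoted from \cite{TVi} and \cite{TuVi} as part of the survey of the state-sum construction, with no argument given. Your outline is faithful to the approach in those references: one shows that any two skeletons of $(M,G)$ sharing the same boundary graph $G$ are connected by a finite sequence of local moves supported in $\operatorname{Int}(M)$ (a relative Matveev--Piergallini statement), and then checks that the state sum is unchanged by each move, the boundary module $H(G^{\mathrm{op}};-\partial M)^\star$ being untouched throughout.

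One remark on presentation rather than substance: your list of moves ($2$--$3$, lune, bubble) and the reduction to the Biedenharn--Elliott and orthogonality identities for $6j$-symbols is the classical simplicial picture, closer to the original \cite{TV}. The treatment in \cite{TuVi} works instead with general stratified $2$-polyhedra and a somewhat different (though equivalent) catalogue of elementary moves on skeletons, and verifies invariance directly through properties of the graphical-calculus functor $\mathbb{F}_{\mathcal{C}}$ rather than through explicit $6j$-symbol identities. Either route is acceptable; just be aware that if you cite \cite{TuVi} for the move list you will need to match their terminology.
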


%9.3. Invariants of 3-cobordisms

Let  $\Sigma_{0}, \Sigma_{1}$  be closed oriented surfaces, and let  $f: \Sigma_{0} \rightarrow \Sigma_{1}$  be a morphism in  $\mathrm{Cob}_{3}$  represented by a pair  $(M, h)$. For  $I $-colored graphs  $G_{0} \subset \Sigma_{0}$  and  $G_{1} \subset \Sigma_{1}$,  
\[
\left|M, h\left(G_{0}^{\mathrm{op}} \cup G_{1}\right)\right|_{\mathcal{C}} \in H\left(h\left(G_{0}^{\mathrm{op}} \cup G_{1}\right) ; \partial M\right)  \simeq \operatorname{Hom}_{\mathbb{k}}\left(H\left(G_{0}\right), H\left(G_{1}\right)\right). 
\]
%\[
%\begin{array}{c}
%\left|M, h\left(G_{0}^{\mathrm{op}} \cup G_{1}\right)\right|_{\mathcal{C}} \in H\left(h\left(G_{0}^{\mathrm{op}} \cup G_{1}\right) ; \partial M\right) \simeq H\left(G_{0}^{\mathrm{op}} \cup G_{1} ;\left(-\Sigma_{0}\right) \sqcup \Sigma_{1}\right) \\
%\simeq H\left(G_{0}^{\mathrm{op}}\right) \otimes H\left(G_{1}\right) \simeq H\left(G_{0}\right)^{\star} \otimes H\left(G_{1}\right) \simeq \operatorname{Hom}_{\mathbb{k}}\left(H\left(G_{0}\right), H\left(G_{1}\right)\right). 
%\end{array}
%\]

Set
\[
\left|f, G_{0}, G_{1}\right|=\frac{(\operatorname{dim}(\mathcal{C}))^{\left|\Sigma_{1} \backslash G_{1}\right|}}{\operatorname{dim}\left(G_{1}\right)}\left|M, h\left(G_{0}^{\mathrm{op}} \cup G_{1}\right)\right|_{\mathcal{C}}: H\left(G_{0}\right) \rightarrow H\left(G_{1}\right),
\]
where  $\left|\Sigma_{1} \backslash G_{1}\right| $ is the number of components of  $\Sigma_{1} \backslash G_{1}$  and  $\operatorname{dim}\left(G_{1}\right)$  is the product over all edges of  $G_{1}$  of the dimensions of their colors. 
% To compute  \left|f, G_{0}, G_{1}\right|  explicitly, we pick a basis  \left\{a_{\alpha}\right\}_{\alpha}  of  H\left(G_{0}\right) , consider the dual basis  \left\{b_{\alpha}\right\}_{\alpha}  of  H\left(G_{0}^{\text {op }}\right) , and expand

%\left|M, h\left(G_{0}^{\mathrm{op}} \cup G_{1}\right)\right|_{\mathcal{C}}=\sum_{\alpha} b_{\alpha} \otimes z_{\alpha} \in H\left(G_{0}^{\mathrm{op}}\right) \otimes H\left(G_{1}\right)

%with  z_{\alpha} \in H\left(G_{1}\right) . Then, for all indices  \alpha ,

%\left|f, G_{0}, G_{1}\right|\left(a_{\alpha}\right)=\frac{(\operatorname{dim}(\mathcal{C}))^{\left|\Sigma_{1} \backslash G_{1}\right|}}{\operatorname{dim}\left(G_{1}\right)} z_{\alpha} .

A \textit{skeleton} of a closed surface $ \Sigma $ is  an oriented graph  $G$  embedded in  $\Sigma$  such that all vertices of  $G$  have valence  $\geq 2$  and all components of  $\Sigma \backslash G$  are open disks.  We let  $\operatorname{col}(G) $ be the set of all maps from the set of edges of $ G$  to $ I $.

For a closed oriented surface  $\Sigma$  and a skeleton  $G \subset \Sigma $, consider the  $\mathbb{k} $-module
\[
|\Sigma, G|^{\circ}=\bigoplus_{c \in \operatorname{col}(G)} H((G, c) ; \Sigma) .
\]
For a morphism  $f: \Sigma_{0} \rightarrow \Sigma_{1}$  in  $\mathrm{Cob}_{3}$  and skeletons  $G_{0} \subset \Sigma_{0}$  and  $G_{1} \subset \Sigma_{1} $, consider the  $\mathbb{k} $-linear homomorphism
\[
\left|f, G_{0}, G_{1}\right|^{\circ}:\left|\Sigma_{0}, G_{0}\right|^{\circ} \rightarrow\left|\Sigma_{1}, G_{1}\right|^{\circ}
\]
whose restriction to every summand  $H\left(\left(G_{0}, c_{0}\right) ; \Sigma_{0}\right) $ of $ \left|\Sigma_{0}, G_{0}\right|^{\circ} $ is equal to
$
\sum_{c_{1} \in \operatorname{col}\left(G_{1}\right)}\left|f,\left(G_{0}, c_{0}\right),\left(G_{1}, c_{1}\right)\right|
$. 
%
%For any $ f_{0}:\Sigma_0\to \Sigma, f_{1}:\Sigma\to \Sigma_1$  and for any skeletons $ G_{0} \subset \Sigma_{0}$, $G \subset \Sigma$, $G_{1} \subset \Sigma_{1}$,
%\[
%\left|f_{1} f_{0}, G_{0}, G_{1}\right|^{\circ}=\left|f_{1}, G, G_{1}\right|^{\circ} \circ\left|f_{0}, G_{0}, G\right|^{\circ} .
%\]
%
%

%Starting from p285 of tuvi

For any skeletons  $G, G^{\prime}$  of a closed oriented surface  $\Sigma $, set
\[
p\left(G, G^{\prime}\right)=\left|\operatorname{id}_{\Sigma}, G, G^{\prime}\right|^{\circ}:|\Sigma, G|^{\circ} \rightarrow\left|\Sigma, G^{\prime}\right|^{\circ}, 
\]
\[
|\Sigma, G|=\operatorname{Im}(p(G, G)). 
\]
%We have 
%\[
%p\left(G, G^{\prime \prime}\right)=p\left(G^{\prime}, G^{\prime \prime}\right) p\left(G, G^{\prime}\right)
%\]
%for any skeletons  $G, G^{\prime}, G^{\prime \prime}$  of  $\Sigma $. In particular,  $(p(G, G))^{2}=p(G, G) $. 

The  $\mathbb{k} $-linear homomorphism  $p\left(G, G^{\prime}\right)$  restricts to an isomorphism  $|\Sigma, G| \rightarrow\left|\Sigma, G^{\prime}\right|$  and the family  $\left(\{|\Sigma, G|\}_{G},\left\{p\left(G, G^{\prime}\right)\right\}_{G, G^{\prime}}\right)$  is a projective system. The projective limit
\[
|\Sigma|=\lim _{\longleftarrow}|\Sigma, G|
\]
is a  $\mathbb{k} $-module depending only on  $\Sigma $.

We  associate with each morphism  $f: \Sigma_{0} \rightarrow \Sigma_{1}$  in $\cob$  a homomorphism  $|f|:\left|\Sigma_{0}\right| \rightarrow\left|\Sigma_{1}\right| $. 
For each skeleton  $G$  of  $\Sigma $, we have a  $\mathbb{k} $-linear cone isomorphism  $\tau_{G}:|\Sigma| \rightarrow|\Sigma, G|$. 
Pick any skeletons  $G_{0} \subset \Sigma_{0}$  and  $G_{1} \subset \Sigma_{1} $. 
%Since 
%\[
%\left|f, G_{0}, G_{1}\right|^{\circ}=p\left(G_{1}, G_{1}\right)\left|f, G_{0}, G_{1}\right|^{\circ}, 
%\]
%the image of  $\left|f, G_{0}, G_{1}\right|^{\circ}$  is contained in  $\left|\Sigma_{1}, G_{1}\right| $. 
Set
\[
|f|=\tau_{G_{1}}^{-1} \circ\left|f, G_{0}, G_{1}\right|^{\circ} \circ \tau_{G_{0}}:\left|\Sigma_{0}\right| \rightarrow\left|\Sigma_{1}\right| .
\]
This homomorphism does not depend on the choice of  $G_{0}$  and  $G_{1} $.

%start from p286

\begin{thm}[\cite{TVi}, \cite{TuVi}]
The functor $|\cdot|=|\cdot|_{\mathcal{C}} $  is a $3$-dimensional TQFT. 
\end{thm}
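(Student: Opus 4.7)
The plan is to verify the TQFT axioms on top of the skeleton-independence theorems already cited. The assignment $\Sigma \mapsto |\Sigma|$ and $f \mapsto |f|$ is already defined; what remains is to check well-definedness of the projective limit, functoriality, strong monoidality, and symmetry.

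First I would establish that the family $\{p(G,G')\}_{G,G'}$ forms a compatible projective system, so that $|\Sigma| = \varprojlim |\Sigma,G|$ makes sense and each $\tau_G:|\Sigma|\to|\Sigma,G|$ is an isomorphism. The key identity $p(G',G'')\circ p(G,G') = p(G,G'')$ is an instance of a gluing lemma applied to two copies of the identity cobordism $\Sigma\times[0,1]$: their concatenation is another cylinder, and a skeleton meeting the middle surface along $G'$ realizes the composition $p(G',G'')\circ p(G,G')$ as the state sum on the glued skeleton. Specializing $G=G''$ shows $p(G,G')$ and $p(G',G)$ are mutually inverse after restriction to $|\Sigma,G| = \mathrm{Im}(p(G,G))$ and $|\Sigma,G'|$.

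For morphisms, the independence of $|f|$ from $G_0, G_1$ follows from the intertwining relations $p(G_1,G_1')\circ|f,G_0,G_1|^\circ = |f,G_0,G_1'|^\circ$ and dually on the source side, each obtained by gluing $f$ with an identity cylinder. Functoriality $|g\circ f| = |g|\circ |f|$ is the crux of the argument: given cobordisms $M_1:\Sigma_0\to\Sigma_1$ and $M_2:\Sigma_1\to\Sigma_2$ with skeletons $P_1, P_2$ meeting compatibly along a skeleton $G_1\subset\Sigma_1$, the union $P_1\cup_{G_1}P_2$ is a skeleton of $M_2\circ M_1$, and the state sum on it factors, after summing over colorings of $G_1$, as the composition of the individual state sums. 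The normalization $(\dim\mathcal{C})^{|\Sigma_1\setminus G_1|}/\dim(G_1)$ is designed precisely so that the middle contribution along $G_1$ acts as the idempotent $p(G_1,G_1)$, which restricts to the identity on $|\Sigma_1,G_1|$. The identity axiom $|\id_\Sigma| = \id_{|\Sigma|}$ is then just the statement that $p(G,G)$ restricts to the identity on $\mathrm{Im}(p(G,G))$.

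Strong monoidality is essentially formal: disjoint union of skeletons $G_0\sqcup G_1$ is a skeleton of $\Sigma_0\sqcup\Sigma_1$, and the construction of $H((G,c);\Sigma)$ is multiplicative under disjoint union, so $|\Sigma_0\sqcup\Sigma_1, G_0\sqcup G_1|^\circ \cong |\Sigma_0,G_0|^\circ \otimes |\Sigma_1,G_1|^\circ$; the same holds for morphisms, and passing to the projective limit gives the required natural isomorphism together with the unit $|\emptyset|\cong\mathbb{k}$. Symmetry follows because the permutation homeomorphism on $\cob$ induces the swap of tensor factors on the level of skeletons. The main obstacle is the gluing lemma for state sums on $3$-manifolds with boundary, together with the careful bookkeeping showing that the normalization prefactors match across the glued middle surface; once that gluing identity is established, all remaining axioms follow formally from skeleton-independence.
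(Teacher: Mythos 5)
Your outline matches the strategy of the proof in Turaev--Virelizier that the paper itself simply cites without reproducing: the projective-system identity $p(G',G'')\circ p(G,G')=p(G,G'')$ via concatenated cylinders, functoriality via gluing skeletons along a common skeleton of the middle surface with the normalization $(\dim\mathcal{C})^{|\Sigma_1\setminus G_1|}/\dim(G_1)$ absorbing the merged balls and regions, and formal monoidality and symmetry from disjoint union. The one substantive step you defer --- the gluing formula for the state sum of a cobordism cut along a surface --- is exactly the technical core of the cited proof as well, so your sketch is faithful to the intended argument.
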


\subsection{Unitarity}

Let $\mathbb{k}$ be an algebraically closed field. 
 Let  $\mathcal{C}$  be a unitary fusion category (over $\mathbb{C}$). Then   $\operatorname{dim}(\mathcal{C}) \neq 0 $. 

\begin{thm}[Theorem G.1 in \cite{TuVi}, Corollary 11.6 in \cite{TVi}]
The state sum TQFT  $|\cdot|_{\mathcal{C}}$  has a unitary lift.
\end{thm}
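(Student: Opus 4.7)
The plan is to equip each space $|\Sigma|_{\mathcal{C}}$ with a positive definite Hermitian form coming from the unitary structure on $\mathcal{C}$, and then to check that orientation reversal of a cobordism corresponds to Hermitian adjunction of the induced linear map. Throughout, I rely on the fact that in a unitary fusion category the numbers $\dim(\mathcal{C})$ and $\dim_l(X)=\dim_r(X)$ for any object $X$ are real and positive.

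First I would define an inner product at the level of multiplicity modules. For a $\mathcal{C}$-colored graph $G$ in a surface and a vertex $v$, the conjugation of $\mathcal{C}$ extends to an antilinear map on the symmetrized module $H_v(G)=\Hom_{\mathcal{C}}(\mathbb{1},\, c(e_1)^{\epsilon(e_1)}\otimes\cdots\otimes c(e_n)^{\epsilon(e_n)})$, and the pairing $\langle \alpha,\beta\rangle_v=\tr(\bar\beta\alpha)\in\mathbb{C}$ is Hermitian, and positive definite by the axiom $\tr(f\bar f)>0$. Taking tensor products over all vertices gives a Hermitian inner product on $H(G)$, and then on $|\Sigma, G|^{\circ}=\bigoplus_{c}H((G,c);\Sigma)$ by declaring distinct colorings orthogonal. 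By duality this also produces a Hermitian form on $H(G)^\star$ used in the state-sum invariants.

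Next I would verify the key compatibility
\[
|-M,\, h P(G_0^{\mathrm{op}}\cup G_1)|_{\mathcal{C}} \;=\; \overline{|M,\, h(G_0^{\mathrm{op}}\cup G_1)|_{\mathcal{C}}}
\]
under the identification $H(G^{\mathrm{op}};-\partial M)^\star\cong \overline{H(G;\partial M)^\star}$. Reversing the orientation of $M$ reverses the orientations of all regions of a skeleton $P$ and flips each link sphere $\partial B_x$, which replaces every local tensor $\mathbb{F}_{\mathcal{C}}(\Gamma_x)$ by its conjugate through the sphericity of $\mathcal{C}$ and the compatibility of graphical calculus with the conjugation (the defining relations $\overline{\ev_X}=\widetilde{\coev}_X$ and $\overline{\coev_X}=\widetilde{\ev}_X$ make the sphere invariant $\mathbb{F}_{\mathcal{C}}$ antiholomorphic under orientation reversal). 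The prefactor $(\dim\mathcal{C})^{-|P|}\prod_r (\dim c(r))^{\chi(r)}$ is real, so the identity above follows.

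Third, I would use this to show that the projector $p(G,G)=|\id_\Sigma,G,G|^{\circ}$ is a Hermitian idempotent on $|\Sigma,G|^{\circ}$; more generally $p(G,G')$ is a partial isometry between the relevant summands. Its image $|\Sigma,G|=\mathrm{Im}(p(G,G))$ therefore carries an induced positive definite Hermitian form, and the transition maps $p(G,G')$ restrict to unitary isomorphisms between these subspaces. The projective limit $|\Sigma|=\varprojlim |\Sigma,G|$ then inherits a canonical Hilbert space structure, independent of the skeleton.

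Finally, for a cobordism $f=(M,h):\Sigma_0\to\Sigma_1$ and the associated $|f|:|\Sigma_0|\to|\Sigma_1|$, I would show that the conjugate cobordism $\bar f=(-M,hP)$ satisfies $|\bar f|=|f|^*$. Because $|f|$ is assembled from $|M,h(G_0^{\mathrm{op}}\cup G_1)|_{\mathcal{C}}$ by the contractions $*_P$ and a real normalization, the orientation-reversal identity above translates, via the identification of the Hermitian form with the contraction pairing between $H(G_0)$ and $H(G_0^{\mathrm{op}};-\Sigma_0)^\star$, into the matrix-element statement $\langle |f|x,y\rangle=\langle x, |\bar f| y\rangle$. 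This is the definition of a unitary lift in the sense of the excerpt.

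The main obstacle is step three: establishing that the symmetrized multiplicity modules and state sums descend to genuinely positive definite forms on $|\Sigma,G|$ after taking the image of $p(G,G)$, and that these forms are then intertwined by all the $p(G,G')$'s coherently enough to define a single Hilbert space $|\Sigma|$. The positivity ultimately reduces to the axiom $\tr(f\bar f)>0$ of $\mathcal{C}$, but threading it through the many contractions and through the passage to the projective limit is the technically delicate point, and is exactly the content hidden behind Appendix G of \cite{TuVi}.
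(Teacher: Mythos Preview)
Your approach is a \emph{direct} construction of the Hermitian structure on the state spaces, and it is plausible as a strategy; but it is not the route taken in the paper. The paper proceeds \emph{indirectly}: it invokes the comparison theorem $|\cdot|_{\mathcal{C}}\simeq \tau_{\mathcal{Z}(\mathcal{C})}$ between the state sum TQFT and the Reshetikhin--Turaev TQFT of the Drinfeld center, then passes to the unitary center $\mathcal{Z}^{u}(\mathcal{C})\subset\mathcal{Z}(\mathcal{C})$ (a braided equivalence by M\"uger), observes that $\mathcal{Z}^{u}(\mathcal{C})$ is an anomaly-free unitary modular category, and finally uses Turaev's result that the Reshetikhin--Turaev TQFT of a unitary modular category is unitary. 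The unitary structure is then transported along the chain of isomorphisms $|\cdot|_{\mathcal{C}}\simeq \tau_{\mathcal{Z}(\mathcal{C})}\simeq \tau_{\mathcal{Z}^{u}(\mathcal{C})}$.

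The trade-off is this. The paper's route is conceptually clean and offloads all positivity questions to the Reshetikhin--Turaev side, where they are already settled in \cite{Tu}; the price is the substantial machinery of the center construction, the proof that $\mathcal{Z}(\mathcal{C})$ is modular, and especially the comparison theorem $|\cdot|_{\mathcal{C}}\simeq\tau_{\mathcal{Z}(\mathcal{C})}$ (which itself requires graph TQFTs, surgery presentations, coends, and a dimension-count lemma). Your route avoids all of that and stays entirely within the state sum formalism, which is more self-contained; the price is exactly the obstacle you flag in step three. In particular, note that the normalization in $|f,G_0,G_1|$ carries the asymmetric factor $(\dim\mathcal{C})^{|\Sigma_1\setminus G_1|}/\dim(G_1)$, so to make $p(G,G)$ genuinely self-adjoint you must weight the inner product on each summand $H((G,c);\Sigma)$ by matching dimension factors; without this adjustment the projector is only conjugate to a Hermitian one, and the coherence of the transition maps $p(G,G')$ as partial isometries does not come for free. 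This is fixable, but it is real work, and it is precisely what the paper sidesteps by going through the center.
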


The proof is involved. Recall the construction of the  Reshetikhin--Turaev extended TQFT $\tau_{\mathcal{B}}$ associated to a modular category $\mathcal{B}$ over $\mathbb{k}$ equipped with a distinguished square root
of $\dim(\mathcal{B})$, described in detail in \cite{Tu}. In this case unitarity is proved in \cite{Tu}, Chapter IV, section 11. 

The center $\mathcal{Z}({\mathcal{C}})$ of ${\mathcal{C}}$ has a subcategory the unitary center $\mathcal{Z}^u({\mathcal{C}})$ of ${\mathcal{C}}$, such that the inclusion is an equivalence of categories. Theorem 17.1 in \cite{TuVi} implies that 
\[
|\cdot|_{\mathcal{C}} \simeq \tau_{\mathcal{Z}(\mathcal{C})} \simeq \tau_{\mathcal{Z}^{u}(\mathcal{C})}. 
\]
Since $\mathcal{Z}^u({\mathcal{C}})$ is an anomaly-free unitary modular category, these  TQFTs are unitary.

In this section we introduce the idea of the proof, following \cite{TVi}.

\subsubsection{Extending to graph TQFTs}

The goal of this subsection is Theorem 11.1 and Theorem 11.2 in \cite{TVi}:

\begin{thm}[\cite{TVi}]
 Let  $\mathcal{C}$  be a spherical fusion category over an algebraically closed field $\mathbb{k}$ such that $ \operatorname{dim} \mathcal{C} \neq 0 $. Then  $|M|_{\mathcal{C}}=\tau_{\mathcal{Z}(\mathcal{C})}(M)$  for any closed oriented $3$-manifold  $M $. Moreover  the TQFTs  $|\cdot|_{\mathcal{C}} $  and  $\tau_{\mathcal{Z}(\mathcal{C})}$  are isomorphic. 
\label{thmtqftisom}
\end{thm}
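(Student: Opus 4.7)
My plan is to prove the second (TQFT-isomorphism) statement directly, so that the scalar equality for closed $M$ drops out by evaluation on the cobordism $\emptyset\to\emptyset$. The overall strategy, following \cite{TuVi}, is to extend $|\cdot|_{\mathcal{C}}$ to a graph TQFT $|\cdot|^{\bullet}_{\mathcal{C}}$ whose cobordisms may contain $\mathcal{Z}(\mathcal{C})$-coloured ribbon graphs, and then identify $|\cdot|^{\bullet}_{\mathcal{C}}$ with $\tau_{\mathcal{Z}(\mathcal{C})}$ by appealing to the universal property of Reshetikhin--Turaev theories attached to an anomaly-free modular category (here $\dim\mathcal{Z}(\mathcal{C})=(\dim\mathcal{C})^{2}\neq 0$).

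The extension uses the fact that an object of $\mathcal{Z}(\mathcal{C})$ is a pair $(X,\sigma)$ with $X\in\mathcal{C}$ and $\sigma=\{\sigma_{Y}:X\otimes Y\to Y\otimes X\}_{Y}$ a half-braiding---exactly the data needed to push a $\mathcal{C}$-coloured strand through the regions of a skeleton. Given $(M,G)$ with $G\subset\partial M$ coloured by simples of $\mathcal{Z}(\mathcal{C})$ and a skeleton $P\subset M$ of the pair, I would enlarge the state sum by running the underlying $\mathcal{C}$-object of each strand of $G$ through $P$ and inserting $\sigma$ at every transverse crossing with a region of $P$. The same projective-limit construction then produces state spaces $|\Sigma|^{\bullet}_{\mathcal{C}}$ for surfaces with $\mathcal{Z}(\mathcal{C})$-coloured marked points, and skeleton independence plus cobordism functoriality are obtained by adapting the Pachner / Matveev--Piergallini move analysis of the unextended case. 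To identify $|\cdot|^{\bullet}_{\mathcal{C}}$ with $\tau_{\mathcal{Z}(\mathcal{C})}$, I would fix a pants decomposition of each surface, decompose $|\Sigma|^{\bullet}_{\mathcal{C}}$ into multiplicity spaces indexed by simple-object labellings of the pants curves, and match them against the standard basis of $\tau_{\mathcal{Z}(\mathcal{C})}(\Sigma)$; on cobordisms, agreement on a generating set (cups, caps, handle attachments, framed-link surgeries) reduces, on the state-sum side, to evaluating a ``chain mail'' link in the RT theory, following the Barrett--Westbury / Roberts comparison.

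The principal obstacle is the resolution step: showing that inserting the half-braiding at each strand--region crossing produces a quantity genuinely independent of $P$. This reduces to a short list of diagrammatic identities expressing that naturality of $\sigma$ with respect to morphisms of $\mathcal{C}$ makes the insertion compatible with the elementary skeleton moves; once these are checked, everything else is combinatorial bookkeeping. A secondary technical point is identifying the multiplicity spaces of the pants decomposition on the state-sum side with those of $\tau_{\mathcal{Z}(\mathcal{C})}$, which relies on realising simple objects of $\mathcal{Z}(\mathcal{C})$ as minimal idempotents in Ocneanu's tube algebra of $\mathcal{C}$; once this dictionary is in hand, the universal property of the RT TQFT over a modular category forces $|\cdot|^{\bullet}_{\mathcal{C}}\simeq\tau_{\mathcal{Z}(\mathcal{C})}$ and restriction to the empty graph yields the stated theorem.
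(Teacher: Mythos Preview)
Your proposal is essentially correct, and it shares with the paper the first move of extending $|\cdot|_{\mathcal{C}}$ to a graph TQFT on cobordisms carrying $\mathcal{Z}(\mathcal{C})$-coloured ribbon graphs. From there, however, the two arguments diverge. You aim to build an \emph{explicit} isomorphism: decompose each state space along a pants decomposition, identify the resulting multiplicity blocks with those of $\tau_{\mathcal{Z}(\mathcal{C})}$ via the tube-algebra description of the simple objects of $\mathcal{Z}(\mathcal{C})$, and then check agreement on a generating set of cobordisms by reducing the state sum to a chain-mail evaluation in the RT theory \`a la Roberts/Barrett--Westbury. The paper instead invokes the abstract comparison criterion from \cite{Tu}: two (graph) TQFTs are isomorphic as soon as one of them is non-degenerate, their scalar invariants of closed manifolds (with ribbon graphs) agree, and the dimensions of the state spaces agree. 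The scalar equality $|M,R|_{\mathcal{C}}=\tau_{\mathcal{Z}(\mathcal{C})}(M,R)$ is established by a surgery presentation of $M$ on $S^3$, expanding the torus vector in the basis $\{[j]\}_{j\in\mathcal{J}}$ and reducing to the graphical calculus in $S^3$; the dimension equality is handled separately (Lemma~11.3 of \cite{TVi}) using coends and Hopf monads rather than tube algebras.

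What each approach buys: the paper's route avoids constructing any explicit natural transformation between the two functors and never needs to match bases of state spaces---it trades that work for the non-trivial dimension computation via Hopf-monad machinery. Your route is more hands-on and closer in spirit to the Balsam--Kirillov and Roberts arguments; it gives a concrete isomorphism but requires the tube-algebra dictionary and careful bookkeeping on pants decompositions. Both are valid; they simply allocate the hard step differently (coend/Hopf-monad dimension count versus tube-algebra identification of simples).
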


%
%The rest of the paper is devoted to the proof of these two theorems. The proof is based on the following key lemma.
%
%Lemma 11.3. Under the conditions of Theorem 11.1, the vector spaces  |\Sigma|_{\mathcal{C}}  and  \tau_{\mathcal{Z}(\mathcal{C})}(\Sigma)  associated with any closed connected oriented surface  \Sigma  have equal dimensions.
%
%

Let  $\mathcal{C}$  be a monoidal category. A \textit{half braiding}  is a pair  $(A, \sigma) $, where  $A \in \operatorname{Ob}(\mathcal{C})$  and
$
\sigma=\left\{\sigma_{X}: A \otimes X \rightarrow X \otimes A\right\}_{X \in \operatorname{Ob}(\mathcal{C})}
$
is a natural isomorphism such that
$
\sigma_{X \otimes Y}=\left(\mathrm{id}_{X} \otimes \sigma_{Y}\right)\left(\sigma_{X} \otimes \mathrm{id}_{Y}\right)
$
for all  $X, Y \in \mathrm{Ob}(\mathcal{C}) $. 

The \textit{center} of  $\mathcal{C}$  is the braided category  $\mathcal{Z}(\mathcal{C}) $ whose objects   are half braidings. A morphism  $(A, \sigma) \rightarrow\left(A^{\prime}, \sigma^{\prime}\right)$  in  $\mathcal{Z}(\mathcal{C})$  is a morphism  $f: A \rightarrow A^{\prime}$   such that  $\left(\mathrm{id}_{X} \otimes f\right) \sigma_{X}=\sigma_{X}^{\prime}\left(f \otimes \operatorname{id}_{X}\right)$  for all  $X \in \operatorname{Ob}(\mathcal{C}) $. There is a forgetful functor.

The following theorem is Theorem 1.2 and  Proposition 5.18 in
\cite{muger}. 

\begin{thm}[\cite{muger}]
Let  $\mathcal{C}$  be a spherical fusion $\mathbb{k}$-category such that  $\operatorname{dim} \mathcal{C} \neq 0 $. Then  $\mathcal{Z}(\mathcal{C})$  is an anomaly free modular category with $ \Delta_{+}=\Delta_{-}=\operatorname{dim} \mathcal{C} $, 
 $\operatorname{dim} \mathcal{Z}(\mathcal{C})=\Delta_{+} \Delta_{-}=(\operatorname{dim} \mathcal{C})^{2} $.
\end{thm}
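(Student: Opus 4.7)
My plan is to follow the strategy of Müger, breaking the theorem into three stages: structural properties of $\mathcal{Z}(\mathcal{C})$ as a braided fusion category, identification and enumeration of its simple objects via an induction functor, and finally the Gauss-sum computation that yields modularity together with $\Delta_\pm = \dim\mathcal{C}$.

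First, I would verify that $\mathcal{Z}(\mathcal{C})$ is a spherical fusion $\mathbb{k}$-category with a natural braiding. The braiding is built into the data: for objects $(A,\sigma)$ and $(B,\tau)$, the morphism $\sigma_B : A\otimes B \to B\otimes A$ is braided-natural by the hexagon axiom encoded in the half-braiding condition. Rigidity transfers from $\mathcal{C}$ through the forgetful functor $F:\mathcal{Z}(\mathcal{C})\to\mathcal{C}$: given $(A,\sigma)$, equip $A^*$ with the half-braiding obtained by conjugating $\sigma$ with $\ev$ and $\coev$. The pivotal/spherical structure lifts since $F$ is faithful and strong monoidal, and dimensions of objects in $\mathcal{Z}(\mathcal{C})$ coincide with those in $\mathcal{C}$ via $F$.

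The central technical step is constructing the two-sided adjoint $I:\mathcal{C}\to\mathcal{Z}(\mathcal{C})$ of the forgetful functor, defined on objects by
\[
I(X) \;=\; \bigoplus_{i\in I} V_i \otimes X \otimes V_i^{*},
\]
with half-braiding built from the coevaluations $V_j \to V_i \otimes ? \otimes V_j$ weighted by $\dim(V_i)$. Using $\dim\mathcal{C}\neq 0$, one checks that $I$ is faithful, $FI$ is isomorphic to tensoring by the regular object $R = \bigoplus_i V_i\otimes V_i^*$, and every simple object of $\mathcal{Z}(\mathcal{C})$ occurs as a summand of some $I(V_i)$. This forces semisimplicity and finiteness of isomorphism classes of simples. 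The adjunction identity $\Hom_{\mathcal{Z}(\mathcal{C})}(I(X),(A,\sigma)) \cong \Hom_{\mathcal{C}}(X, FA)$ combined with the dimension formula $\dim_{\mathcal{C}} I(X) = \dim(\mathcal{C})\cdot\dim_{\mathcal{C}}(X)$ gives
\[
\dim\mathcal{Z}(\mathcal{C}) \;=\; \sum_{(A,\sigma)\ \mathrm{simple}} (\dim A)^{2} \;=\; (\dim \mathcal{C})^{2},
\]
by a standard orthogonality argument applied to the coefficients of $I(V_i)$ decomposed into simples of $\mathcal{Z}(\mathcal{C})$.

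Finally, for modularity and the values of $\Delta_\pm$, I would compute the Gauss sums $\Delta_\pm = \sum_{(A,\sigma)} \theta_{(A,\sigma)}^{\pm 1}(\dim A)^2$ by pushing the sum through the induction functor and using that $\theta_{I(X)}$ acts via the twist inherited from $\mathcal{C}$; the key cancellation, due to the half-braiding, collapses the double sum so that $\Delta_\pm = \dim\mathcal{C}$. Invertibility of the $S$-matrix then follows from the identity $\Delta_+\Delta_- = \dim\mathcal{Z}(\mathcal{C})$, since $\dim\mathcal{Z}(\mathcal{C}) = (\dim\mathcal{C})^2 \neq 0$ forces both Gauss sums to be nonzero and, by the standard argument relating $S\bar S$ to the square of the dimension, yields invertibility of $S$ over $\mathbb{k}$. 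The hardest step I expect is the Gauss-sum computation: verifying that the off-diagonal terms vanish and isolating the factor $\dim\mathcal{C}$ requires a careful diagrammatic manipulation of the half-braidings, and this is where Müger's argument does the real work.
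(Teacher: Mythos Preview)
The paper does not give its own proof of this theorem; it merely attributes the result to M\"uger (Theorem 1.2 and Proposition 5.18 of \cite{muger}) and uses it as a black box to ensure that $\tau_{\mathcal{Z}(\mathcal{C})}$ is defined. Your outline is a reasonable sketch of M\"uger's argument via the induction functor $I:\mathcal{C}\to\mathcal{Z}(\mathcal{C})$, so in that sense you are aligned with what the paper points to.

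One genuine caution on your final paragraph: the step ``$\Delta_{+}\Delta_{-}=\dim\mathcal{Z}(\mathcal{C})$, hence $S$ is invertible'' is not automatic for an arbitrary ribbon fusion category. The identity relating $S\bar S$ (or $\Delta_{+}\Delta_{-}$) to $\dim$ is typically established \emph{after} one knows the $S$-matrix is invertible, so invoking it here risks circularity. M\"uger's actual route to modularity is different: he shows that $\mathcal{Z}(\mathcal{C})$ has no non-trivial transparent objects, by arguing that any transparent $(A,\sigma)$ forces $A$ to be a direct sum of copies of $\mathbb{1}$ in $\mathcal{C}$ with $\sigma$ trivial. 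Modularity is then the statement that the M\"uger center is trivial. Only once this is in hand does the Gauss-sum computation yield $\Delta_{\pm}=\dim\mathcal{C}$ (and hence anomaly-freeness). You should reorganize the last stage accordingly: prove triviality of the symmetric center first, then read off $\Delta_{\pm}$.
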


Hence it makes sense to talk about the  Reshetikhin--Turaev extended TQFT. 

It is necessary to extend the definition of a TQFT. 
For any category  $\mathcal{B} $, we define a category  $\mathcal{L}_{\mathcal{B}}$  of $3 $-cobordisms with  $\mathcal{B} $-colored framed oriented links inside.  The links  may be empty so that the category  $\mathrm{Cob}_{3}$  is a subcategory of $ \mathcal{L}_{\mathcal{B}} $. 

By a \textit{link  TQFT}  we mean a symmetric monoidal functor  $Z: \mathcal{L}_{\mathcal{B}} \rightarrow  \Mod_{\mathbb{k}} $.

For a category  $\mathcal{B} $, we define a category  $\mathcal{G}_{\mathcal{B}} $  of $3$-cobordisms with  $\mathcal{B} $-colored ribbon graphs inside, see \cite{Tu}. Here we consider only ribbon graphs disjoint from the bases of cobordisms.  

 By a \textit{graph  TQFT}  we mean a symmetric monoidal functor  $\mathcal{G}_{\mathcal{B}} \rightarrow  \Mod_{\mathbb{k}} $.

The TQFT  $\tau_{\mathcal{Z}(\mathcal{C})}$  extends to a graph TQFT  still denoted  $\tau_{\mathcal{Z}(\mathcal{C})} $, see \cite{Tu}. This TQFT is \textit{non-degenerate}, i.e.  the vector space  $\tau_{\mathcal{Z}(\mathcal{C})}(\Sigma)$    is generated by the vectors  $\tau_{\mathcal{Z}(\mathcal{C})}(M, \emptyset, \Sigma)(1) $, where  $M$  runs over all compact oriented $3 $-manifolds with  $\mathcal{Z}(\mathcal{C}) $-colored ribbon graphs inside and with $ \partial M=\Sigma $. 

The TQFT  $|\cdot|_{\mathcal{C}}$  also can be extended to a graph TQFT. We discuss the extension to a link TQFT, which is similar.

Let $M$ be a compact oriented $3$-manifold, and $P$ be a skeleton. A link in $M$ can be presented by possibly intersecting circles in $P$ in general position, called the \textit{link diagram}. The underlying $4$-valent graph may be added to the $1$-skeleton of $P$. An \textit{enriched link diagram} in $P$ is a link diagram with half integers added to record the framing. 
Two enriched link diagrams in $P$ represent isotopic framed links if and only
if these diagrams may be related by a finite sequence of certain  moves and
ambient isotopies in $P$.

Consider a pair  $(M, L) $, where  $M$  is a compact oriented $3 $-manifold (with empty graph $G=\emptyset$ in the boundary) and  $L=L_1\sqcup \dots\sqcup L_N \subset \operatorname{Int}(M)$  is an oriented framed link whose all components are non-trivial and colored by simple objects $J_1\dots J_N$ of  $\mathcal{Z}(\mathcal{C}) $. We define   $|M, L|_{\mathcal{C}} \in \mathbb{k}$  as follows. 
Fix a
(finite) representative set $I$ of simple objects of $\mathcal{C}$. 
Pick a {special skeleton} (see \cite{TVi})  $P$  of  $M$  and an oriented enriched link diagram  $d $ in  $P$  representing  $L $. After adding $d$ to $P^{(1)}$, we obtain regions.

Pick distinguished square root  $\nu_{J_{q}} \in \mathbb{k}$  of the twist scalar  $v_{J_{q}}$  of  $J_{q} $. Let  $n_{q} \in \frac{1}{2} \mathbb{Z}$  be the pre-twist of the loop of  $d$  representing  $L_{q} $. Set
\[
|M, L|_{\mathcal{C}}=(\operatorname{dim}(\mathcal{C}))^{-|P|} \prod_{q=1}^{N} \nu_{J_{q}}^{2 n_{q}} \sum_{c}\left(\prod_{r \in \operatorname{Reg}(d)}(\operatorname{dim} c(r))^{\chi(r)}\right) *_{P}\left(\otimes_{x}|x|_{c}\right) . 
\]

This is a topological invariant of the pair $(M, L)$. It extends to the case where links may have trivial components.

It is shown in Theorem 1.2 in \cite{muger} that here $\mathcal{Z}(\mathcal{C}) $ is semisimple with finitely many simple objects. The 
invariant  extends by linearity to arbitrary colors of the components of $L$.

We  extend the  TQFT $|\cdot|_{\mathcal{C}}: \operatorname{Cob}_{3} \rightarrow \Mod_{\mathbb{k}}$ to a link TQFT  $\mathcal{L}_{\mathcal{Z}(\mathcal{C})} \rightarrow   \Mod_{\mathbb{k}} $, and moreover a graph TQFT.

\subsubsection{$|M, R|_{\mathcal{C}}=\tau_{\mathcal{Z}(\mathcal{C})}(M, R)$}

For a closed (connected) oriented  $3$-manifold $M$ with $\mathcal{Z}(\mathcal{C}) $-colored ribbon graph $R$ without free ends inside, we claim 

\begin{lem}
$|M, R|_{\mathcal{C}}=\tau_{\mathcal{Z}(\mathcal{C})}(M, R)$. 
\label{lemequalvalue}
\end{lem}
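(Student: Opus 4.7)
The plan is to reduce to the base case $(M, R) = (S^{3}, R_{0})$ via a surgery presentation and then verify that base case by a direct computation on a very simple skeleton. By standard general position, one can find an embedded copy of $R$ inside $S^{3}$ together with a framed link $L \subset S^{3}$ disjoint from $R$ whose Dehn surgery recovers the pair $(M, R)$. If both $|\cdot|_{\mathcal{C}}$ and $\tau_{\mathcal{Z}(\mathcal{C})}$ satisfy the same surgery formula, expressing the invariant of $(M, R)$ as a signature-corrected multiple of the invariant of $(S^{3}, R \cup L_{\Omega})$ with $L_{\Omega}$ the cabling of $L$ by the Kirby color $\Omega = \sum_{j} \dim(j) \cdot j$ (sum over simples of $\mathcal{Z}(\mathcal{C})$), then the problem reduces to the $S^{3}$ case.

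For $\tau_{\mathcal{Z}(\mathcal{C})}$ this surgery formula is the classical statement recorded in \cite{Tu}. For the state sum side I would build $L$ into a special skeleton $P$ of $M$ by enlarging a skeleton of the link exterior with one meridional $2$-disk for each filling solid torus, with its boundary running along the corresponding component of $L$. Summing the $I$-colorings of these new disks reproduces, on the $\mathcal{Z}(\mathcal{C})$ side, a single component of $L_{\Omega}$: the normalization $(\dim \mathcal{C})^{-|P|}$ combined with Müger's theorem $\dim \mathcal{Z}(\mathcal{C}) = (\dim \mathcal{C})^{2}$ quoted above makes the two summation conventions match. Invariance under Kirby moves on $L$ then follows from the handle-slide and blow-up identities built into the axioms of a modular category.

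The base case $(S^{3}, R)$ can be evaluated on the skeleton obtained by choosing a $2$-sphere $P \subset S^{3}$ containing a planar diagram of $R$, which splits $S^{3}$ into two open balls. The state sum then collapses, via the definition of $\mathbb{F}_{\mathcal{C}}$ and the spherical structure, to $(\dim \mathcal{C})^{-1} F_{\mathcal{Z}(\mathcal{C})}(R)$, where $F_{\mathcal{Z}(\mathcal{C})}$ is the graphical evaluation of a $\mathcal{Z}(\mathcal{C})$-colored ribbon graph. This matches the standard RT formula $\tau_{\mathcal{Z}(\mathcal{C})}(S^{3}, R) = \mathcal{D}^{-1} F_{\mathcal{Z}(\mathcal{C})}(R)$ with $\mathcal{D} = (\dim \mathcal{Z}(\mathcal{C}))^{1/2} = \dim \mathcal{C}$.

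The main obstacle is reconciling the two coloring conventions. In the state sum every edge of a skeleton receives an object of $\mathcal{C}$, whereas components of $L$ and of $R$ come colored by half-braidings $(A, \sigma)$ in $\mathcal{Z}(\mathcal{C})$. Translating the half-braiding $\sigma$ into a local prescription on the skeleton, namely into the family of crossings of the $A$-strand with all neighboring strands, is exactly the content of the extension of $|\cdot|_{\mathcal{C}}$ to a $\mathcal{Z}(\mathcal{C})$-colored link/graph TQFT being set up in the surrounding text, and I would complete the argument by invoking that extension rather than re-deriving the braiding identities from scratch.
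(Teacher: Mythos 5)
Your proposal is correct in outline and follows essentially the same route as the paper: present $M$ by surgery on a framed link $L\subset S^3$ disjoint from $R$, reduce $Z(M,R)$ to a Kirby-color-weighted sum of evaluations $Z(S^3, T_{j_1,\dots,j_n})$ over simple $\mathcal{Z}(\mathcal{C})$-colors of the surgery components, and compute each of these as $(\dim\mathcal{C})^{-1}\langle T_{j_1,\dots,j_n}\rangle_{\mathcal{Z}(\mathcal{C})}$ (Theorem 16.1 in \cite{TuVi}), which is the same scalar for both theories. The one place you diverge is in how the surgery formula is justified on the state-sum side: the paper applies the gluing axiom of the extended graph TQFT to the decomposition $M=E\cup_{f_L}V$, writes $Z(M,R)=Z^L_R(w^{\otimes n})$ with $w$ the torus vector, and expands $w$ in the basis $\{[j]\}_{j\in\mathcal{J}}$ of $Z(S^1\times S^1)$; you instead propose to build $L$ into a special skeleton and sum $I$-colorings of meridional disks. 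Be aware that your claim that the $I$-sum over meridional disk colors ``reproduces a single component of $L_\Omega$'' is not mere normalization bookkeeping: the simple objects of $\mathcal{C}$ and of $\mathcal{Z}(\mathcal{C})$ are related only through the induction functor/coend, and matching the two sums is exactly where the Hopf-monad machinery of \cite{TVi} enters --- the same work that is hidden in the paper's phrase ``one can explicitly expand $w$ in terms of this basis.'' Also note that the signature correction you invoke is vacuous here, since $\mathcal{Z}(\mathcal{C})$ is anomaly-free ($\Delta_+=\Delta_-=\dim\mathcal{C}$); as a sketch at the level of detail of the paper's own, your argument is acceptable.
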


%here

%  \label{fig:appendix-figure}
%\end{figure}
%
%
%\subsubsection{表格}
%
%附录中的表格示例（表~\ref{tab:appendix-table}）。
%

Present $M$ by surgery on $S^3$ along a framed oriented
link $L$. Pick a closed regular neighborhood $U$ of $L$, and set $E=S^3\backslash \operatorname{Int}(U)$. We consider $R\subset E$. 

Pick a homeomorphism
\[
f_{L}:\left(S^{1} \times S^{1}\right)^{\sqcup n} \longrightarrow \partial U=-\partial E
\]
which carries the  $q $-th copy of  $S^{1} \times\{\mathrm{pt}\}$  onto a positively oriented meridian of  $L_{q}$  and carries the  $q $-th copy of  $\{\mathrm{pt}\} \times S^{1}$  onto a positively oriented longitude of $ L_{q}$  determined by the framing.

Define $V=(-(S^1 \times D^2))^{\sqcup n} $, $W=(D^2\times S^1)^{\sqcup n}$. 
Using $f_{L}$, 
the gluing of $W$ to $E$ yields $S^3$, while the gluing of $V$ to $E$ yields $M$. 
The \textit{torus vector} of a TQFT  $Z$  is the vector
$
w=w(Z)=Z(V)\left(1_{\mathbb{k}}\right) \in Z(\partial V)=Z\left(S^{1} \times S^{1}\right) 
$
where $n=1$.

There are morphisms
\[
Z\left(f_{L}\right)=(f_{L})_\#: Z\left(\left(S^{1} \times S^{1}\right)^{\sqcup n}\right) \rightarrow Z(-\partial E) , 
\]
\[
Z(E, R): Z(-\partial E) \rightarrow \mathbb{k} .
\]

Define
$
Z_{R}^{L}
$
to be the composition. 
Hence
\[
Z\left(M, R\right)=Z_{R}^{L}\left(w^{\otimes n}\right). 
\]

We fix a representative set $\mathcal{J}$ of simple
objects of $\mathcal{Z}(\mathcal{C})$.

 For  $X \in \mathrm{Ob}(\mathcal{Z}(\mathcal{C})) $, let
\[
K_{X}=\{0\} \times S^{1} \subset \operatorname{Int}\left(D^{2} \times S^{1}\right)
\]
be the  $\mathcal{Z}(\mathcal{C})$-colored knot with orientation induced by that of $ S^{1} $, constant framing, and color  $X $. Set
\[
[X]=Z\left(D^{2} \times S^{1}, K_{X}\right)\left(1_{\mathbb{k}}\right) \in Z\left(S^{1} \times S^{1}\right). 
\]
Indeed, 
\begin{lem}
The vectors $\{[j]\}_{j \in \mathcal{J}}$   form a basis. 
\end{lem}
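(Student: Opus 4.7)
The plan is to prove linear independence via a Hopf-link pairing on $Z(S^1 \times S^1)$ and then match dimensions against the known dimension of the torus Hilbert space of a modular-category TQFT.

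First I would set up a non-degenerate bilinear pairing on $Z(T^2) := Z(S^1 \times S^1)$ induced by the cobordism realizing $S^3$ as a genus-one Heegaard splitting. Concretely, let $\phi: T^2 \to T^2$ be the orientation-reversing involution exchanging meridian and longitude; gluing two copies of $V = D^2 \times S^1$ along $\phi$ yields $S^3$, and the two cores become the two components of the Hopf link $H \subset S^3$. Coloring the cores by simple objects $i, j \in \mathcal{J}$ of $\mathcal{Z}(\mathcal{C})$ and applying the gluing axiom for the (extended) graph TQFT $|\cdot|_{\mathcal{C}}$, we obtain
\[
\langle [i], [j] \rangle_{\phi} \;=\; Z(S^3, H_{ij}) \;\in\; \mathbb{k},
\]
where the left-hand side denotes the bilinear form on $Z(T^2)$ transported through $\phi_{\#}$.

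Next I would identify this pairing with the $S$-matrix of the center. By Theorem \ref{thmtqftisom}, $|\cdot|_{\mathcal{C}} \simeq \tau_{\mathcal{Z}(\mathcal{C})}$, so $Z(S^3, H_{ij}) = \tau_{\mathcal{Z}(\mathcal{C})}(S^3, H_{ij})$. The standard surgery computation in Reshetikhin--Turaev theory (see \cite{Tu}, Chapter II) yields
\[
\tau_{\mathcal{Z}(\mathcal{C})}(S^3, H_{ij}) \;=\; \mathcal{D}^{-1}\, S_{ij},
\]
where $S = (S_{ij})_{i,j \in \mathcal{J}}$ is the $S$-matrix of $\mathcal{Z}(\mathcal{C})$ and $\mathcal{D}$ is the distinguished square root of $\dim \mathcal{Z}(\mathcal{C}) = (\dim \mathcal{C})^2$. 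By the M\"uger theorem cited above, $\mathcal{Z}(\mathcal{C})$ is an anomaly-free modular category, so $S$ is invertible over $\mathbb{k}$. Hence the Gram matrix $\bigl(\langle [i],[j] \rangle_{\phi}\bigr)_{i,j \in \mathcal{J}}$ is invertible, which forces the family $\{[j]\}_{j \in \mathcal{J}}$ to be linearly independent in $Z(T^2)$.

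Finally I would match dimensions. For a modular category the Reshetikhin--Turaev theory gives $\dim \tau_{\mathcal{Z}(\mathcal{C})}(T^2) = |\mathcal{J}|$; this follows either from the Verlinde formula or from the TQFT identity $\dim Z(\Sigma) = Z(\Sigma \times S^1)$ applied to $\Sigma = T^2$ and a direct count on $T^3$. Transporting along the isomorphism of Theorem \ref{thmtqftisom} yields $\dim Z(T^2) = |\mathcal{J}|$. A linearly independent set of size $|\mathcal{J}|$ inside a $|\mathcal{J}|$-dimensional $\mathbb{k}$-module is necessarily a basis.

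The main obstacle is normalization bookkeeping: one must verify that the factors $(\dim \mathcal{C})^{-|P|}$ occurring in the state-sum $|\cdot|_{\mathcal{C}}$ and the factors $\mathcal{D}^{-1}$ from the surgery presentation in $\tau_{\mathcal{Z}(\mathcal{C})}$ align so that the pairing induced by the swap cobordism is genuinely $\mathcal{D}^{-1} S$ and not rescaled by a zero or by a quantity whose invertibility is unclear. Once this alignment is checked, the invertibility of the $S$-matrix of $\mathcal{Z}(\mathcal{C})$ is exactly what is needed, and the rest of the argument is formal.
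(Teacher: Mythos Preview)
Your argument is circular in the paper's logical order. This lemma sits inside the proof sketch of Lemma~\ref{lemequalvalue} (the equality $|M,R|_{\mathcal{C}}=\tau_{\mathcal{Z}(\mathcal{C})}(M,R)$ for closed $M$), and Lemma~\ref{lemequalvalue} is in turn one of the three ingredients subsequently assembled to prove Theorem~\ref{thmtqftisom}. You invoke Theorem~\ref{thmtqftisom} twice: once to replace $Z(S^3,H_{ij})$ by $\tau_{\mathcal{Z}(\mathcal{C})}(S^3,H_{ij})$, and once to import $\dim Z(T^2)=|\mathcal{J}|$ from the Reshetikhin--Turaev side. Neither appeal is available at this point; indeed the equality $\dim|\Sigma|_{\mathcal{C}}=\dim\tau_{\mathcal{Z}(\mathcal{C})}(\Sigma)$ is singled out immediately afterwards as a separate lemma whose proof ``involves more tools like coends and Hopf monads.''

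Your Hopf-link/$S$-matrix mechanism is the right idea, but it has to be run internally to the state-sum TQFT. The value $|S^3,H_{ij}|_{\mathcal{C}}$ can be computed directly from the state-sum-with-links formula introduced just above the lemma; Theorem~16.1 of \cite{TuVi} (which the paper cites in this very passage, and which is established independently of Theorem~\ref{thmtqftisom}) identifies it with $(\dim\mathcal{C})^{-1}\langle H_{ij}\rangle_{\mathcal{Z}(\mathcal{C})}=(\dim\mathcal{C})^{-1}S_{ij}$, and modularity of $\mathcal{Z}(\mathcal{C})$ (M\"uger) then gives linear independence exactly as you argue. For spanning one must likewise obtain $\dim|S^1\times S^1|_{\mathcal{C}}=|\mathcal{J}|$ by a direct state-sum computation (e.g.\ via $|T^3|_{\mathcal{C}}$), not by transporting along the not-yet-established isomorphism.
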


One can explicitly expand $w$ in terms of this basis. 
To calculate $|M, R|_{\mathcal{C}}$, we need $Z_{R}^{L}\left(w^{\otimes n}\right)$, and it suffices to calculate 
%
%For  j_{1}, \ldots, j_{n} \in \mathcal{J} , let  T_{j_{1}, \ldots, j_{n}}  be the union of  R^{\prime}  with the link  L  whose components  L_{1}, \ldots, L_{n}  are colored with  j_{1}, \ldots, j_{n} , respectively. Lemma  17.4  and Theorem 16.1 give
\[
Z_{R}^{L}\left(\left[j_{1}\right] \otimes_{\mathbb{k}} \cdots \otimes_{\mathbb{k}}\left[j_{n}\right]\right)=Z\left(S^{3}, T_{j_{1}, \ldots, j_{n}}\right)=(\operatorname{dim}(\mathcal{C}))^{-1}\left\langle T_{j_{1}, \ldots, j_{n}}\right\rangle_{\mathcal{Z}(\mathcal{C})}
\]
\[
 \in \operatorname{End}_{\mathcal{Z}(\mathcal{C})}\left(\mathbb{1}_{\mathcal{Z}(\mathcal{C})}\right)=\operatorname{End}_{\mathcal{C}}(\mathbb{1})=\mathbb{k}, 
\]
where $ j_{1}, \ldots, j_{n} \in \mathcal{J}$, $T_{j_{1}, \ldots, j_{n}}$  is  the union of  $R$  with the link  $L$  whose components  $L_{1}, \ldots, L_{n} $ are colored with  $j_{1}, \ldots, j_{n}$, and  
$\langle \cdot\rangle_{\mathcal{Z}(\mathcal{C})} $ is the graphical calculus, see Theorem 16.1 in \cite{TuVi}.

Now Lemma~\ref{lemequalvalue} follows from a complete expansion.

%now show tqft the same, ref the thm

\subsubsection{Proof of Theorem~\ref{thmtqftisom}}

We claim that there is a monoidal isomorphism of the functors  $\tau_{\mathcal{Z}(\mathcal{C})}$  and  $|\cdot|_{\mathcal{C}}$  from  $\mathcal{G}_{\mathcal{Z}(\mathcal{C})}$  to $\Mod_{\mathbb{k}}$.  

Indeed there is  a general criterion establishing isomorphism of two (generalized)  TQFTs, see \cite{Tu}, Chapter III, Section 3.  If 

(1) at least one of the TQFTs is non-degenerate, 

(2)  the values of these TQFTs on cobordisms with empty bases are equal, and 

(3) the vector spaces associated  with any object have equal dimensions, 

then these TQFTs are isomorphic.  

The graph TQFT  $\tau_{\mathcal{Z}(\mathcal{C})} $  is non-degenerate. That $ |M, K|_{\mathcal{C}}=\tau_{\mathcal{Z}(\mathcal{C})}(M, K) $ for any  $\mathcal{Z}(\mathcal{C}) $-colored ribbon graph  $K$  in a closed oriented $3 $-manifold  $M$  is proven. The equality of dimensions is provided by the following lemma. This completes the proof of the theorem.

The following lemma is Lemma 11.3 in \cite{TVi}. 

\begin{lem}[\cite{TVi}]
The vector spaces  $|\Sigma|_{\mathcal{C}}$  and  $\tau_{\mathcal{Z}(\mathcal{C})}(\Sigma)$  associated with any closed connected oriented surface  $\Sigma$  have equal dimensions. 
\end{lem}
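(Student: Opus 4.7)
The plan is to exploit the standard TQFT identity $Z(\Sigma \times S^1) = \dim_{\mathbb{k}} Z(\Sigma)$, valid for any $3$-dimensional TQFT $Z$ taking values in finite-dimensional $\mathbb{k}$-vector spaces, and then reduce to the already-established equality of the two invariants on closed $3$-manifolds (Lemma~\ref{lemequalvalue}).

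First I would verify that both vector spaces are finite-dimensional. For the state sum side, fix any skeleton $G$ of $\Sigma$; the module $|\Sigma, G|^{\circ} = \bigoplus_{c \in \operatorname{col}(G)} H((G, c);\Sigma)$ is a finite direct sum of symmetrized multiplicity modules, each of which is finite-dimensional because $\mathcal{C}$ is a fusion category over a field and $G$ has finitely many edges and vertices. The image $|\Sigma, G| = \operatorname{Im}(p(G, G)) \cong |\Sigma|_{\mathcal{C}}$ is therefore finite-dimensional. Finite-dimensionality of $\tau_{\mathcal{Z}(\mathcal{C})}(\Sigma)$ is a standard feature of the Reshetikhin--Turaev construction (see \cite{Tu}).

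Next, I would realize $\Sigma \times S^1$ as a trace in $\cob$: up to homeomorphism, $\Sigma \times S^1$ is the cobordism $\emptyset \to \emptyset$ obtained by composing the coevaluation $\emptyset \to \Sigma \sqcup (-\Sigma)$, the identity cylinder $\Sigma \times [0,1]$ on one factor tensored with the identity on the other, and the evaluation $(-\Sigma) \sqcup \Sigma \to \emptyset$, where the duality morphisms in $\cob$ are built from bent copies of $\Sigma \times [0,1]$. Applying the symmetric strong monoidal functor $Z$ (for each of $Z = |\cdot|_{\mathcal{C}}$ and $Z = \tau_{\mathcal{Z}(\mathcal{C})}$) to this decomposition and using that $Z(\Sigma \times [0,1]) = \id_{Z(\Sigma)}$ gives
\[
|\Sigma \times S^1|_{\mathcal{C}} = \dim_{\mathbb{k}} |\Sigma|_{\mathcal{C}}, \qquad \tau_{\mathcal{Z}(\mathcal{C})}(\Sigma \times S^1) = \dim_{\mathbb{k}} \tau_{\mathcal{Z}(\mathcal{C})}(\Sigma).
\]
By Lemma~\ref{lemequalvalue} applied to $M = \Sigma \times S^1$ with empty ribbon graph, the left-hand sides are equal, hence so are the right-hand sides.

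The main technical obstacle is the trace identity itself --- verifying that $\Sigma \times S^1$ really does decompose in $\cob$ as the trace of the identity cylinder with the correct orientations, and that the functors $|\cdot|_{\mathcal{C}}$ and $\tau_{\mathcal{Z}(\mathcal{C})}$ genuinely carry this decomposition to the categorical trace in $\Mod_{\mathbb{k}}$. This is standard TQFT material but requires careful bookkeeping of the orientation-reversing permutation $P$ appearing in the conjugation on $\cob$ and of the gluing homeomorphisms $h$ that identify the two ends of the cylinder.
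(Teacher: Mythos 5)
You should first note that the paper does not actually prove this lemma: it cites \cite{TVi} and remarks that the proof there ``involves more tools like coends and Hopf monads.'' In that reference both dimensions are computed as honest integers, namely as dimensions of Hom-spaces of the form $\Hom(\mathbb{1},C^{\otimes g})$ for the canonical coends of $\mathcal{C}$ and of $\mathcal{Z}(\mathcal{C})$, compared via the adjunction between $\mathcal{C}$ and its center. Your route is genuinely different and is not circular: Lemma~\ref{lemequalvalue} is established independently of any dimension count, both functors are symmetric strong monoidal on $\cob$ (for $\tau_{\mathcal{Z}(\mathcal{C})}$ this uses that $\mathcal{Z}(\mathcal{C})$ is anomaly-free), every object $\Sigma$ of $\cob$ is dualizable with dual $-\Sigma$, and $\Sigma\times S^{1}$ is the categorical trace of $\id_{\Sigma}$. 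Since a symmetric monoidal functor preserves duality data, you correctly obtain
\[
|\Sigma\times S^{1}|_{\mathcal{C}}=\dim_{\mathbb{k}}|\Sigma|_{\mathcal{C}}\cdot 1_{\mathbb{k}},\qquad
\tau_{\mathcal{Z}(\mathcal{C})}(\Sigma\times S^{1})=\dim_{\mathbb{k}}\tau_{\mathcal{Z}(\mathcal{C})}(\Sigma)\cdot 1_{\mathbb{k}},
\]
and the left-hand sides agree by Lemma~\ref{lemequalvalue}.

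The gap is in the final step. The trace identity determines each dimension only as an element of $\mathbb{k}$, that is, as the image of an integer under $\mathbb{Z}\to\mathbb{k}$. The lemma feeds into Theorem~\ref{thmtqftisom}, which is stated for a spherical fusion category over an arbitrary algebraically closed field; if $\operatorname{char}\mathbb{k}=p>0$ your argument yields only $\dim_{\mathbb{k}}|\Sigma|_{\mathcal{C}}\equiv\dim_{\mathbb{k}}\tau_{\mathcal{Z}(\mathcal{C})}(\Sigma)\pmod{p}$, which is strictly weaker than the assertion of equal dimensions. This is essentially why the proof in \cite{TVi} passes through coends and Hopf monads: that machinery produces the integer dimensions themselves rather than their residues in $\mathbb{k}$. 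Over $\mathbb{C}$ --- the only case this thesis ultimately needs, since unitarity forces $\mathbb{k}=\mathbb{C}$ --- your argument is complete and considerably more elementary than the cited one, but it does not prove the lemma in the stated generality.
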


Its proof involves more tools like coends and Hopf monads, see \cite{TVi}.

\subsubsection{Proof of unitarity}

%modify! 

A half braiding $ (A, \sigma)$  of  $\mathcal{C} $ is  \textit{unitary} if $ \overline{\sigma_{X}}=\sigma_{X}^{-1}$  for all $ X \in \mathrm{Ob}(\mathcal{C}) $. The \textit{unitary center}  $\mathcal{Z}^{u}(\mathcal{C}) $  of  $\mathcal{C} $  is the full subcategory of $ \mathcal{Z}(\mathcal{C})$  formed by the unitary half braidings. 

The inclusion  $\mathcal{Z}^{u}(\mathcal{C}) \subset \mathcal{Z}(\mathcal{C}) $ is a braided equivalence, see  Theorem 6.4 in  \cite{muger}. Therefore $ \mathcal{Z}^{u}(\mathcal{C})$  is also  modular. The induced conjugation makes   $\mathcal{Z}^{u}(\mathcal{C}) $  a unitary modular category,  and hence  the TQFT  $\tau_{\mathcal{Z}^{u}(\mathcal{C})} $  is unitary. 

Now there are isomorphisms of TQFTs  $$\tau_{\mathcal{Z}^{u}}(\mathcal{C}) \simeq \tau_{\mathcal{Z}(\mathcal{C})} \simeq|\cdot|_{\mathcal{C}}. $$  The first one is induced by the braided equivalence  $\mathcal{Z}^{u}(\mathcal{C}) \simeq \mathcal{Z}(\mathcal{C})$. The unitary structure of $ \tau_{\mathcal{Z}^{u}(\mathcal{C})} $ is transported to $ |\cdot|_{\mathcal{C}} $ via the isomorphism.

By  Corollary 11.6, Theorem 11.2, and Theorem  11.5  of \cite{Tu}, 
\begin{cor}[\cite{TVi}]
If  $\mathcal{C}$  is a unitary fusion category, then  $$|| M|_{\mathcal{C}} | \leq(\operatorname{dim}(\mathcal{C}))^{g(M)-1}$$  for any closed oriented $3$-manifold  $M $, where  $g(M)$  is the Heegaard genus of  $M $.
\end{cor}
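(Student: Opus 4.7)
\proof{The plan is to imitate Garoufalidis's argument, now correctly accounting for the three properties (1)--(3) verified above. Let $g = g(M)$ and fix a minimal-genus Heegaard splitting $M = H_1 \cup_\phi H_2$, where $H_1, H_2$ are genus $g$ handlebodies and $\phi : \partial H_1 \to \partial H_2$ is an orientation-preserving homeomorphism of the genus $g$ surface $\Sigma_g$. View $H_1$ as a cobordism $\emptyset \to \Sigma_g$ and $H_2$ as a cobordism $\Sigma_g \to \emptyset$. Then $|M|_{\mathcal{C}}$ is computed by composing $Z(H_1)$, $\phi_\#$, and $Z(H_2)$.

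Set $v_1 = |H_1|_{\mathcal{C}}(1) \in |\Sigma_g|_{\mathcal{C}}$ and let $v_2 \in |\Sigma_g|_{\mathcal{C}}$ be the vector representing $\overline{|H_2|_{\mathcal{C}}}$ via the unitary conjugation, so that $|H_2|_{\mathcal{C}}(w) = \langle w, v_2 \rangle$ for $w \in |\Sigma_g|_{\mathcal{C}}$. Then
\[
|M|_{\mathcal{C}} = \langle \phi_\#(v_1), v_2 \rangle .
\]
Since the earlier proposition shows that $\phi_\#$ is unitary, the Cauchy--Schwarz inequality gives $|\,|M|_{\mathcal{C}}\,| \leq \|v_1\| \cdot \|v_2\|$. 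It therefore suffices to prove that $\|v_i\|^2 = (\dim \mathcal{C})^{g-1}$ for $i = 1, 2$.

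To compute $\|v_1\|^2$, observe that $\|v_1\|^2 = \langle v_1, v_1 \rangle$ is, by the compatibility of $Z$ with conjugations in $\Cob_3$ and in $\Hilb$, equal to $|D(H_1)|_{\mathcal{C}}$, where $D(H_1)$ denotes the double of $H_1$, obtained by gluing $H_1$ to $-H_1$ along the identity on $\Sigma_g$. The double of a genus $g$ handlebody is well known to be the connected sum $\#_g\,(S^1 \times S^2)$. Applying property (1) inductively to this $g$-fold connected sum yields
\[
|\,\#_g\,(S^1 \times S^2)\,|_{\mathcal{C}} \cdot |S^3|_{\mathcal{C}}^{\,g-1} = |S^1 \times S^2|_{\mathcal{C}}^{\,g} ,
\]
and invoking properties (2) and (3) together with $|S^3|_{\mathcal{C}} = (\dim \mathcal{C})^{-1}$ we obtain $\|v_1\|^2 = (\dim \mathcal{C})^{g-1}$. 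The computation for $\|v_2\|^2$ is identical. Combining these bounds with Cauchy--Schwarz completes the proof.

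The main subtlety is step identifying $\|v_i\|^2$ with $|D(H_i)|_{\mathcal{C}}$: one must check that the hermitian adjoint of $|H_i|_{\mathcal{C}}$ in $\Hilb$ really corresponds, under the TQFT, to the conjugate cobordism in $\Cob_3$, and that gluing this conjugate to $H_i$ along the identity of $\Sigma_g$ produces the double rather than, e.g., $S^3$ or some other manifold. This is precisely where we use that $Z$ commutes with the conjugations, as in the unitary TQFT axiom, and is the content that replaces the erroneous step in Garoufalidis's original argument. Everything else is bookkeeping with the three closed-manifold identities (1)--(3).
\qedhere}
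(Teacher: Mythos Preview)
Your argument is correct and follows essentially the same route as the paper's proof in Chapter~4: Cauchy--Schwarz on the Heegaard pairing, unitarity of $\phi_\#$, identification of $\|v_i\|^2$ with the invariant of the doubled handlebody $\#_g(S^1\times S^2)$, and then properties (1)--(3). The only cosmetic difference is that the paper writes the splitting as $M=H\cup_f(-H)$ with a \emph{single} handlebody, so that $v_1=v_2=u$ from the outset and the separate computation of $\|v_2\|^2$ via the conjugation axiom is absorbed into the preceding lemma $Z(M\cup_f -N)=\langle f_\# Z(M),Z(N)\rangle$.
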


In the next chapter we give another proof of a similar theorem.

%chap04

\section{Counterexamples to Rank versus Genus Conjecture}

\subsection{Estimating the Heegaard genus}

%Lemma 10.2 need to be included. (in chap 04)

%Using a unitary TQFT to estimate the Heegaard genus. 

In this section we estimate the Heegaard genus, using arguments similar to Theorem 2.2 in \cite{garoufalidis}. {This theorem is not correct in general. }Note that Theorem 2.2 uses Lemma 2.1, and  the author claims that Lemma 2.1 follows from a paper by Witten \cite{witten}. In \cite{witten}, there is an implicit condition on the TQFT, namely the space associated to $S^2$ must be one-dimensional. This condition is not mentioned in \cite{garoufalidis}. 

However, the estimate is valid in our special case. 

\begin{lem}[Lemma 13.6 in \cite{TuVi}]
Over any commutative
ring $\mathbb{k}$, the $\mathbb{k}$-module $|S^2|_{\mathcal{C}}$ is isomorphic to $\mathbb{k}$. 
\end{lem}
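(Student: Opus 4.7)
The plan is to compute $|S^{2}|_{\mathcal{C}}$ directly from its projective-limit description, using a minimal skeleton of $S^{2}$ and explicitly analyzing the resulting idempotent. First, I take as a skeleton $G\subset S^{2}$ a single oriented loop $e$ based at one vertex $v$ of valence $2$; this cuts $S^{2}$ into two open disks, so $G$ is a valid skeleton. Since each $i\in I$ is simple, the symmetrized multiplicity module at $v$ with loop colored by $i$ is
\[
H_{v}(G)\cong \Hom_{\mathcal{C}}(\mathbb{1},\, i\otimes i^{*})\cong \End_{\mathcal{C}}(i)\cong \mathbb{k},
\]
so $|S^{2},G|^{\circ}=\bigoplus_{i\in I}\mathbb{k}\cdot b_{i}$ is free of rank $|I|$, and the task reduces to identifying the image of the canonical idempotent $p(G,G)$.

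Next, I would compute $p(G,G)=|\id_{S^{2}},G,G|^{\circ}$ from the state sum on a skeleton $P$ of the pair $\bigl(S^{2}\times[0,1],\, G_{0}^{\mathrm{op}}\cup G_{1}\bigr)$. A convenient choice is the cylinder $P=G\times[0,1]$ augmented by a transverse $2$-disk so that every interior edge attains valence $3$, producing a single interior vertex whose link carries a standard quantum $6j$-type contraction. Expanding the state sum and simplifying using the contractions $*_{e}$ and the sphericality of $\mathcal{C}$, I expect the matrix coefficients of $p(G,G)$ to reduce to
\[
p(G,G)_{j}^{i}=\frac{\dim(i)\,\dim(j)}{\dim(\mathcal{C})},
\]
which is manifestly idempotent via $\sum_{j\in I}(\dim j)^{2}=\dim(\mathcal{C})$, and whose image is the rank-one $\mathbb{k}$-submodule spanned by $\sum_{j\in I}\dim(j)\,b_{j}$. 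This identifies $|S^{2}|_{\mathcal{C}}=\operatorname{Im}\bigl(p(G,G)\bigr)\cong \mathbb{k}$.

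The hard part will be the bookkeeping in the state sum, in particular arranging the auxiliary disk so that the interior vertex's graphical evaluation collapses to the completeness identity above rather than to a more complicated combination of $6j$-symbols with nontrivial cancellations. As a safeguard I would keep a purely module-theoretic argument in reserve: the cobordisms $B^{3}\colon\emptyset\to S^{2}$ and $-B^{3}\colon S^{2}\to\emptyset$ produce a vector $\alpha=|B^{3}|_{\mathcal{C}}(1)\in|S^{2}|_{\mathcal{C}}$ and a functional $\beta\colon|S^{2}|_{\mathcal{C}}\to\mathbb{k}$ with $\beta(\alpha)=|S^{3}|_{\mathcal{C}}=(\dim\mathcal{C})^{-1}$, a unit of $\mathbb{k}$. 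This exhibits $\mathbb{k}$ as a direct summand of the finitely generated projective $\mathbb{k}$-module $|S^{2}|_{\mathcal{C}}$, and combined with the trace identity $|S^{2}\times S^{1}|_{\mathcal{C}}=1$ from \S 13.1.3 of \cite{TuVi}, the complementary summand is forced to vanish, yielding $|S^{2}|_{\mathcal{C}}\cong \mathbb{k}$ for any commutative ring $\mathbb{k}$.
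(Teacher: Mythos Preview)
Your primary approach---computing the idempotent $p(G,G)$ on the free module $|S^{2},G|^{\circ}\cong\bigoplus_{i\in I}\mathbb{k}$ for the one-loop skeleton $G$ and identifying its image---is precisely the ``direct calculation using definitions'' the paper alludes to, and the target formula $p(G,G)^{i}_{j}=\dim(i)\dim(j)/\dim(\mathcal{C})$ is the correct one. Once that is established, the image is spanned by $\sum_{j\in I}\dim(j)\,b_{j}$, which is unimodular because $\dim(\mathbb{1})=1$, so $\operatorname{Im}p(G,G)\cong\mathbb{k}$ over any commutative ring.

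Your ``safeguard'' argument, however, does not stand on its own. From $\beta(\alpha)=(\dim\mathcal{C})^{-1}\in\mathbb{k}^{\times}$ you correctly obtain a splitting $|S^{2}|_{\mathcal{C}}\cong\mathbb{k}\oplus Q$ with $Q$ finitely generated projective, and the trace identity then gives $\tr(\id_{Q})=|S^{2}\times S^{1}|_{\mathcal{C}}-1=0$. But over a general commutative ring $\mathbb{k}$ the condition $\tr(\id_{Q})=0$ does \emph{not} force $Q=0$: already over a field of characteristic $p$, any vector space of dimension divisible by $p$ satisfies it. Hence the complementary summand is not ``forced to vanish'' by the trace alone, and this backup route cannot deliver the lemma for arbitrary $\mathbb{k}$; you are thrown back on the explicit idempotent computation in any case.
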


The proof is just a direct calculation using definitions.

\begin{lem}[Lemma 10.2 in \cite{TuVi}]
Let  $Z: \operatorname{Cob}_{n} \rightarrow \operatorname{Mod}_{\mathbb{k}}$  be an  $n $-dimensional TQFT such that $Z\left(S^{n-1}\right) \simeq \mathbb{k}$. Then, for any connected sum  $M=M_{0} \# M_{1}$, we have 
\[
Z\left(S^{n}\right) Z\left(M,-\partial M_{0}, \partial M_{1}\right)=Z\left(M_{1}, \emptyset, \partial M_{1}\right) \circ Z\left(M_{0},-\partial M_{0}, \emptyset\right). 
\]
In particular, if  $\partial M_{0}=\partial M_{1}=\emptyset $, then  $Z\left(S^{n}\right) Z(M)=Z\left(M_{0}\right) Z\left(M_{1}\right) $. 
\end{lem}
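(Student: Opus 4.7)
The plan is to realize both sides of the identity as compositions of cobordisms in $\operatorname{Cob}_n$ and then apply functoriality of $Z$ together with the one-dimensionality of $Z(S^{n-1})$. Write $N_i = M_i \setminus \operatorname{Int}(B_i)$, where $B_i$ is the open ball removed from the interior of $M_i$ in forming the connected sum $M_0 \# M_1$, so that $N_i$ acquires a new $S^{n-1}$ boundary component in addition to $\partial M_i$. A standard $n$-ball admits two natural cobordism structures: a cap $B_+ : S^{n-1} \to \emptyset$ and a cocap $B_- : \emptyset \to S^{n-1}$. Choose orientations so that $N_0$ is a cobordism $-\partial M_0 \to S^{n-1}$ and $N_1$ a cobordism $S^{n-1} \to \partial M_1$. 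Then the definition of connected sum produces the four identities of cobordisms
\[
M_0 = B_+ \circ N_0, \quad M_1 = N_1 \circ B_-, \quad M_0 \# M_1 = N_1 \circ N_0, \quad S^n = B_+ \circ B_-.
\]

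Applying $Z$ and substituting into the claim reduces it to showing
\[
Z(N_1) \circ \bigl(Z(B_-) \circ Z(B_+)\bigr) \circ Z(N_0) = Z(S^n) \cdot Z(N_1) \circ Z(N_0),
\]
so it suffices to prove that $Z(B_-) \circ Z(B_+)$ acts on $Z(S^{n-1})$ as multiplication by the scalar $Z(S^n) \in \operatorname{End}_{\mathbb{k}}(\mathbb{k}) = \mathbb{k}$. This is precisely where the hypothesis $Z(S^{n-1}) \cong \mathbb{k}$ enters: every $\mathbb{k}$-linear endomorphism of a free rank-one module is multiplication by a scalar. Fixing such an isomorphism, $Z(B_+)$ becomes multiplication by some $\alpha \in \mathbb{k}$ and $Z(B_-)$ by some $\beta \in \mathbb{k}$, and commutativity of $\mathbb{k}$ then yields
\[
Z(B_-) \circ Z(B_+) = \beta\alpha = \alpha\beta = Z(B_+) \circ Z(B_-) = Z(S^n),
\]
as needed.

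The main obstacle is not this scalar computation but the orientation bookkeeping in the first step: one must verify that the four gluings above agree with $M_0$, $M_1$, $M_0 \# M_1$, and $S^n$ as oriented cobordisms together with their parameterizing homeomorphisms $h$, and not merely as underlying manifolds. Once the two ball structures $B_\pm$ are arranged so that $B_+ \circ B_-$ recovers $S^n$ with its standard orientation, the four identities are immediate from the definition of connected sum, and specialization to $\partial M_0 = \partial M_1 = \emptyset$ recovers the scalar form $Z(S^n) Z(M) = Z(M_0) Z(M_1)$ stated at the end of the lemma.
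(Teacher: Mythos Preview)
Your argument is correct and is essentially the paper's own proof: the paper likewise removes balls to obtain cobordisms $n_0,n_1$ (your $N_0,N_1$) and ball morphisms $b_0,b_1$ (your $B_+,B_-$), fixes an isomorphism $z:Z(S^{n-1})\to Z(\emptyset)$ to write $Z(b_0)=\lambda_0 z$, $Z(b_1)=\lambda_1 z^{-1}$, and then uses $Z(S^n)=\lambda_0\lambda_1$ together with commutativity of $\mathbb{k}$ to insert $Z(b_1)\circ Z(b_0)$ between $Z(n_1)$ and $Z(n_0)$. Your $\alpha,\beta$ are exactly their $\lambda_0,\lambda_1$, and your remark about orientation bookkeeping corresponds to their implicit identification of the glued cobordisms with $M$, $M_0$, $M_1$, $S^n$.
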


\begin{proof}[Sketch of proof]
For  $i=0,1 $, pick a ball   $B_{i} \subset \operatorname{Int}\left(M_{i}\right) $ and set  $N_{i}=M_{i} \backslash \operatorname{Int}\left(B_{i}\right)$. 
%Pick orientation preserving homeomorphisms  $\varphi_{0}: S^{n-1} \rightarrow   -\partial B_{0}$  and  $\varphi_{1}: S^{n-1} \rightarrow \partial B_{1} $. 
Let
\[
b_{0}: S^{n-1} \rightarrow \emptyset, \quad b_{1}: \emptyset \rightarrow S^{n-1}, \quad n_{0}:-\partial M_{0} \rightarrow S^{n-1}, \quad n_{1}: S^{n-1} \rightarrow \partial M_{1}
\]
be the corresponding morphisms. 

Pick a  $\mathbb{k} $-linear isomorphism  $z: Z\left(S^{n-1}\right) \rightarrow   Z(\emptyset) $. Then $ Z\left(b_{0}\right)=\lambda_{0} z $ and  $Z\left(b_{1}\right)=\lambda_{1} z^{-1}$  for some  $\lambda_{0}, \lambda_{1} \in \mathbb{k}$. It follows  that
\[
Z\left(S^{n}\right) \operatorname{id}_{Z(\emptyset)}=Z\left(S^{n}, \emptyset, \emptyset\right)=Z\left(b_{0} \circ b_{1}\right)=Z\left(b_{0}\right) Z\left(b_{1}\right)=\lambda_{0} \lambda_{1} \mathrm{id}_{Z(\emptyset)}. 
\]
Hence
\[
\begin{array}{l}
Z\left(S^{n}\right) Z\left(M,-\partial M_{0}, \partial M_{1}\right) \\
=\lambda_{0} \lambda_{1} Z\left(n_{1} \circ n_{0}\right) \\
=\lambda_{0} \lambda_{1} Z\left(n_{1}\right) \circ Z\left(n_{0}\right) \\
=Z\left(n_{1}\right) \circ\left(\lambda_{1} z^{-1}\right) \circ\left(\lambda_{0} z\right) \circ Z\left(n_{0}\right) \\
=Z\left(n_{1} \circ b_{1}\right) \circ Z\left(b_{0} \circ n_{0}\right)\\
=Z\left(M_{1}, \emptyset, \partial M_{1}\right) \circ Z\left(M_{0},-\partial M_{0}, \emptyset\right) .
\end{array}
\]
\end{proof}

A direct calculation in  section 13.1.3 in \cite{TuVi} shows that 

\begin{lem}[\cite{TuVi}]
$|S^1\times S^2|_{{\mathcal{C}}}=1$, and   $|S^{3}|_{{\mathcal{C}}}=(\operatorname{dim}({\mathcal{C}}))^{-1}$.  
\end{lem}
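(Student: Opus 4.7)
The plan is to apply the state-sum formula directly with a carefully chosen skeleton in each case and then to collapse the resulting finite sum using the definition of $\dim(\mathcal{C})$ and the orthogonality relations among multiplicity modules of a spherical fusion category.

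For $|S^{3}|_{\mathcal{C}}$ I would take $P=S^{2}\subset S^{3}$ to be an equatorial $2$-sphere. This is a legitimate stratified $2$-polyhedron with empty $1$-skeleton: it has a single region $r$ of Euler characteristic $2$, no vertices and no edges, and complement $S^{3}\setminus P$ consisting of two open $3$-balls, so $|P|=2$. The graphical-calculus factor $\ast_{P}(\otimes_{x}\mathbb{F}_{\mathcal{C}}(\Gamma_{x}))$ reduces to the empty product $1$, and the state sum collapses to
\[
|S^{3}|_{\mathcal{C}}=(\dim(\mathcal{C}))^{-2}\sum_{i\in I}(\dim i)^{2}=(\dim(\mathcal{C}))^{-1},
\]
the final equality being the definition of $\dim(\mathcal{C})$ for a spherical fusion category.

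For $|S^{1}\times S^{2}|_{\mathcal{C}}$ I would take the skeleton associated with the standard genus-$1$ Heegaard splitting. Write $S^{1}\times S^{2}=V_{+}\cup_{T}V_{-}$, where $T=S^{1}\times E$ (with $E\subset S^{2}$ the equator) is the Heegaard torus bounding two solid tori $V_{\pm}$, and set $P=T\cup D_{+}\cup D_{-}$ with $D_{\pm}\subset V_{\pm}$ meridian disks placed at two distinct $S^{1}$-coordinates. A short check shows that $M\setminus P$ is the disjoint union of two open $3$-balls, so $|P|=2$. The regions of $P$ are two annuli from $T$ (each with $\chi=0$, contributing no dimension factor) and the two disks $D_{\pm}$ (each with $\chi=1$). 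After placing one vertex on each of $\partial D_{\pm}$, the link of every vertex is a theta-graph in $S^{2}$, and the state sum becomes a finite sum of theta-graph invariants weighted by $\dim(d_{+})\dim(d_{-})$. Summing over $d_{\pm}$ collapses each theta invariant through the orthogonality of fusion channels, contributing a factor of $\dim(\mathcal{C})=\sum_{a}(\dim a)^{2}$ per vertex; the two such factors cancel the prefactor $(\dim(\mathcal{C}))^{-2}$ and leave $1$.

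The main obstacle is the orthogonality collapse, since one must track orientation conventions and multiplicity-module identifications in $\mathbb{F}_{\mathcal{C}}$ carefully. If this bookkeeping becomes cumbersome, the isomorphism $|\cdot|_{\mathcal{C}}\simeq\tau_{\mathcal{Z}(\mathcal{C})}$ proved above (Theorem~\ref{thmtqftisom}) gives a cleaner route: presenting $S^{1}\times S^{2}$ as $0$-framed surgery on the unknot in $S^{3}$ and applying the Reshetikhin--Turaev surgery formula yields
\[
\tau_{\mathcal{Z}(\mathcal{C})}(S^{1}\times S^{2})=(\dim(\mathcal{Z}(\mathcal{C})))^{-1}\sum_{j\in\mathcal{J}}(\dim j)^{2}=1,
\]
which by the isomorphism equals $|S^{1}\times S^{2}|_{\mathcal{C}}$.
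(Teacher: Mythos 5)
Your proposal is correct and follows essentially the same route as the source the paper cites for this lemma (\cite{TuVi}, Section 13.1.3): a direct evaluation of the state sum on explicit skeletons, namely the equatorial $S^2 \subset S^3$ with empty $1$-skeleton, and a Heegaard-type skeleton of $S^1\times S^2$ whose regions are two annuli and two meridian disks. The one loose point is the claim that each vertex contributes a separate factor of $\dim(\mathcal{C})$: summing the disk color $a$ at one vertex actually yields $\dim(i)\dim(j)$ for the adjacent annulus colors $i,j$, and only the subsequent sum over $i$ and $j$ (which are shared between the two vertices) produces $(\dim(\mathcal{C}))^2$ to cancel the prefactor --- but this is precisely the bookkeeping you flag, and your fallback via $|\cdot|_{\mathcal{C}}\simeq\tau_{\mathcal{Z}(\mathcal{C})}$ and the surgery formula is also a valid (non-circular, given the paper's ordering) alternative.
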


Here, we consider the original Turaev--Viro TQFT \cite{TV}, at $q=e^{\pi i/r}$, $r\geq 3$. Then 
\[
|S^3|=-\frac{\left(e^{\frac{i \pi }{r}}-e^{-\frac{i \pi }{r}}\right)^2}{2 r}=\frac{2 \sin ^2\left(\frac{\pi }{r}\right)}{r}\in (0,1). 
\]

\begin{lem}
Let $M,N$ be  compact oriented $3$-manifolds and $f:\partial M\to \partial N$ be an orientation-preserving homeomorphism. Let $Z$ be a unitary TQFT, and write $f_{\#}:Z(\partial M)\to Z(\partial N)$ for the isomorphism induced by $f$. Then 
\[
Z(M\cup_f-N)=\langle f_{\#}Z(M),Z(N)\rangle_{Z(\partial N)}\in \mathbb{C}. 
\]
\end{lem}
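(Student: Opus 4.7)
The plan is to express $M \cup_f -N$ as a composition of three cobordisms and then apply the functor $Z$. First, I would decompose this closed $3$-manifold as the composite
\[
(-N) \circ C_f \circ M \colon \emptyset \longrightarrow \emptyset
\]
in $\cob$, where $M$ is viewed as a cobordism $\emptyset \to \partial M$; where $C_f = (\partial N \times [0,1], \partial M, \partial N, h_f)$ is the mapping cylinder cobordism associated to $f$, as constructed in the paragraph defining $f_\#$; and where $-N$ is the conjugate (in the sense of the conjugation on $\cob$) of $N \colon \emptyset \to \partial N$, hence a cobordism $\partial N \to \emptyset$ with underlying $3$-manifold $-N$. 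Verifying that this composite is homeomorphic to $M \cup_f -N$ is a standard collar argument: gluing $C_f$ onto $M$ and onto $-N$ absorbs the cylinder into collar neighborhoods of $\partial M$ and $\partial N$, and the identification along the common boundary enforces the gluing map $f$.

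Next, I would apply $Z$ and invoke functoriality to obtain
\[
Z(M \cup_f -N) = Z(-N) \circ Z(C_f) \circ Z(M) \colon \mathbb{C} \longrightarrow \mathbb{C}.
\]
By construction $Z(C_f) = f_\#$, and by the hypothesis that $Z$ commutes with the conjugations on $\cob$ and on $\Hilb$, we have $Z(-N) = \overline{Z(N)}$, which is the Hermitian adjoint of the linear map $Z(N) \colon \mathbb{C} \to Z(\partial N)$. Identifying $Z(M)$ and $Z(N)$ with the vectors $Z(M)(1) \in Z(\partial M)$ and $Z(N)(1) \in Z(\partial N)$, and using the elementary fact that for $T \colon \mathbb{C} \to H$ the adjoint satisfies $\overline{T}(v) = \langle v, T(1)\rangle_{H}$, evaluating at $1 \in \mathbb{C}$ yields
\[
Z(M \cup_f -N) = \overline{Z(N)}\bigl(f_\# Z(M)\bigr) = \langle f_\# Z(M), Z(N)\rangle_{Z(\partial N)}.
\]

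The only step that requires any care is the identification $Z(-N) = \overline{Z(N)}$: this is exactly the content of the unitary lift hypothesis, since the conjugation on $\cob$ reverses the orientation of the underlying $3$-manifold and swaps source and target of the cobordism, while the conjugation on $\Hilb$ sends a linear map to its Hermitian adjoint, and a unitary TQFT is required to intertwine the two. Once this is in place, the rest is pure functoriality combined with the inner-product description of the adjoint of a map out of the monoidal unit $\mathbb{C}$, so no essential obstacle arises.
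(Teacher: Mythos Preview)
Your proof is correct and follows essentially the same approach as the paper: decompose $M\cup_f-N$ as the composite $(-N)\circ C_f\circ M$ in $\cob$, apply functoriality, use the unitarity hypothesis to identify $Z(-N)$ with the Hermitian adjoint of $Z(N)$, and then read off the inner product. The paper's proof is terser and runs the equality chain in the opposite direction (starting from the inner product and ending at $Z(M\cup_f-N)$), but the underlying ideas and the key use of unitarity are identical.
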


\proof{In $\cob$ there are morphisms
\[
-N:\partial N\to \emptyset, \quad N:\emptyset \to \partial N. 
\]
Hence
\[
Z(-N):Z(\partial N)\to Z(\emptyset)=\mathbb{C}, \quad Z(N):\mathbb{C} \to Z(\partial N). 
\]

Note that $N$ and $-N$ are conjugate. By the definition of unitary TQFT, 
\[
\begin{array}{l}
\langle f_{\#}Z(M),Z(N)\rangle_{Z(\partial N)}\\
=\langle f_{\#}Z(M)1,Z(N)1\rangle_{Z(\partial N)}\\
=\langle Z(-N)f_{\#}Z(M)1,1\rangle\\
=Z(M\cup_f-N). 
\end{array}
\]
\qedhere}

In the case where we glue two components $\Sigma_1$, $\Sigma_2$ of the boundary of one manifold $M$, it is equivalent to glue $\Sigma_1\times I$ to $M$, and the lemma applies as well.

\begin{thm}
Let $Z$ be the original Turaev--Viro TQFT \cite{TV}, at $q=e^{\pi i/r}$, $r\geq 3$. Let $M$ be a closed oriented $3$-manifold. Then 
\[
|Z(M)| \leq Z\left(S^{3}\right)^{-g(M)+1}, 
\]
where  $g(M)$  is the Heegaard genus of  $M$. Thus
\[
g(M)-1 \geq -\frac{\log |Z(M)|}{\log Z\left(S^{3}\right)}. 
\]
\end{thm}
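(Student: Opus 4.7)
The plan is to bound $|Z(M)|$ via a Heegaard splitting together with the inner-product lemma just stated. Suppose $M$ admits a genus-$g$ Heegaard splitting $M = H_1 \cup_\phi -H_2$, where $H_1,H_2$ are handlebodies of genus $g=g(M)$ and $\phi:\partial H_1\to\partial H_2$ is an orientation-preserving homeomorphism. The gluing lemma gives
\[
Z(M) = \langle \phi_\# Z(H_1),\, Z(H_2)\rangle_{Z(\Sigma_g)},
\]
and the Cauchy--Schwarz inequality, together with the unitarity of $\phi_\#$ established in the earlier proposition, yields
\[
|Z(M)| \;\leq\; \|\phi_\# Z(H_1)\|\cdot\|Z(H_2)\| \;=\; \|Z(H_1)\|\cdot\|Z(H_2)\|.
\]

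Next I would compute $\|Z(H)\|$ for a genus-$g$ handlebody $H$. Applying the inner-product lemma with $f=\id$,
\[
\|Z(H)\|^2 \;=\; \langle Z(H),Z(H)\rangle \;=\; Z\bigl(H\cup_{\id}-H\bigr),
\]
and the right-hand side is the value of $Z$ on the double of a genus-$g$ handlebody, which is the standard connected sum $\#_g(S^1\times S^2)$ (with the convention that the empty connected sum is $S^3$). Applying the connected-sum formula $Z(S^3)\,Z(M_0\# M_1)=Z(M_0)Z(M_1)$ from Lemma~10.2 of \cite{TuVi} inductively, together with $Z(S^1\times S^2)=1$, one obtains
\[
Z\bigl(\#_g(S^1\times S^2)\bigr) \;=\; Z(S^3)^{1-g},
\]
so $\|Z(H)\|=Z(S^3)^{(1-g)/2}$, using the positivity of $Z(S^3)$ to take a real square root.

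Combining the two estimates yields $|Z(M)|\leq Z(S^3)^{1-g(M)}$, which is the first inequality. For the logarithmic form I would take $\log$ of both sides and divide by $\log Z(S^3)$, which is negative since $Z(S^3)\in(0,1)$; this reverses the inequality and gives $g(M)-1\geq -\log|Z(M)|/\log Z(S^3)$.

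All the substantive ingredients (the gluing lemma, the connected-sum lemma, the values $Z(S^1\times S^2)=1$ and $Z(S^3)\in(0,1)$, the isomorphism $|S^2|_{\mathcal{C}}\simeq\mathbb{k}$ that makes the connected-sum lemma apply, and the unitary invariance of the inner product under mapping class group actions) have been established earlier in the paper, so the argument is essentially an assembly. The only points needing care are matching the boundary-orientation conventions when invoking the gluing lemma, and the base case $g=0$ of the induction (where the double is $S^3$ rather than $S^1\times S^2$, but the formula $Z(S^3)^{1-g}$ evaluates correctly); neither is a genuine obstacle.
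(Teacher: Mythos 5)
Your proposal is correct and follows essentially the same route as the paper's proof: express $Z(M)$ via the gluing lemma as an inner product over the Heegaard surface, apply Cauchy--Schwarz together with the unitarity of the mapping class group action, identify $\langle Z(H),Z(H)\rangle$ with $Z(\#_g(S^2\times S^1))$, and evaluate it as $Z(S^3)^{1-g}$ using the connected-sum formula and $Z(S^2\times S^1)=1$. Your explicit attention to the $g=0$ base case and the sign of $\log Z(S^3)$ is a minor tidying of details the paper leaves implicit.
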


\proof{
Let $M=H\cup_f(-H)$ be a Heegaard splitting of $M$, where $H$ is a handlebody of genus $g$, and $f: \partial H\to \partial H$ is an orientation-preserving homeomorphism. Let $u:= Z(H)\in Z(\partial H)$. Then
\[
\begin{aligned}
|Z(M)| &=\left|\left\langle u, f_{\#}(u)\right\rangle\right| \\
& \leq \sqrt{\langle u, u\rangle\left\langle f_{\#}(u), f_{\#}(u)\right\rangle} \\
&=\langle u, u\rangle \\
&=Z\left(\#_{i=1}^{g} S^{2} \times S^{1}\right) \\
&=Z\left(S^{2} \times S^{1}\right)^{g} Z\left(S^{3}\right)^{-g+1} \\
&=Z\left(S^{3}\right)^{-g+1}. 
\end{aligned}
\]
\qedhere}

\subsection{Searching for counterexamples}
%
%A lemma, using Thurston's geometrization to show that there are no such counterexamples if Heegaard genus is $0,1,2$. 
%
%The successful search at $r=5$, $q=e^{i \pi /5}$. A list of manifolds with Turaev--Viro invariant $\geq 7.235$ (so that the Heegaard genus is at least $3$). 
%
%Analysis: Combining a proposition in  \cite{BZ} showing certaing groups have rank $2$, some of them are (known) counterexamples.  Some are not. A potential counterexample. 
%
%(There will be more discussions if I am able to find something new. )
%
%

First we claim that 
\begin{prop}
There are no counterexamples to the rank versus genus conjecture among manifolds with  Heegaard genus  $0,1,2$. 
\end{prop}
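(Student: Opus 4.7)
The plan is to treat the three cases $g(M)=0,1,2$ separately, using the trivial inequality $r(M)\le g(M)$ together with a lower bound $r(M)\ge g(M)$ coming from a classification of closed orientable $3$-manifolds of small Heegaard genus.

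For $g(M)=0$, a genus $0$ Heegaard splitting realizes $M$ as the union of two $3$-balls glued along their boundary spheres, so $M\cong S^{3}$ and $r(M)=0=g(M)$. For $g(M)=1$, the Heegaard splitting realizes $M$ as a union of two solid tori; by a classical classification such $M$ is either $S^{3}$, $S^{2}\times S^{1}$, or a lens space $L(p,q)$ with $p\ge 2$. The hypothesis $g(M)=1$ excludes $S^{3}$, and in each of the remaining cases $\pi_{1}(M)$ is cyclic and nontrivial, so $r(M)=1=g(M)$.

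The genuinely delicate case is $g(M)=2$. Since $r(M)\le 2$ automatically, I will suppose for contradiction that $r(M)\le 1$, i.e.\ $\pi_{1}(M)$ is cyclic, and split into subcases. If $\pi_{1}(M)$ is trivial, the Poincar\'e conjecture (Perelman) forces $M\cong S^{3}$, hence $g(M)=0$, a contradiction. If $\pi_{1}(M)\cong \mathbb{Z}/n$ with $n\ge 2$, elliptization shows that $M$ is a spherical space form with cyclic fundamental group, so $M$ is a lens space and $g(M)\le 1$, again a contradiction. If $\pi_{1}(M)\cong \mathbb{Z}$, the sphere theorem provides a nonseparating essential $S^{2}\subset M$; Kneser--Milnor prime decomposition together with $\pi_{1}(M)\cong \mathbb{Z}$ then forces $M\cong S^{2}\times S^{1}$, whence $g(M)=1$, a final contradiction. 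In all subcases the assumption $r(M)\le 1$ is untenable, so $r(M)=2=g(M)$.

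The main obstacle is the $g(M)=2$ case: ruling out a cyclic fundamental group does not seem to admit a purely combinatorial treatment and requires invoking heavy $3$-manifold machinery (the Poincar\'e conjecture, elliptization/geometrization, and the sphere theorem plus prime decomposition). The cases $g=0$ and $g=1$ are essentially bookkeeping on top of the standard classification of Heegaard splittings of small genus.
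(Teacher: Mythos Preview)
Your proof is correct and follows the same strategy as the paper: reduce to showing that a closed orientable $3$-manifold with cyclic $\pi_1$ has Heegaard genus at most $1$, then invoke elliptization for finite cyclic $\pi_1$ and a separate argument for $\pi_1\cong\mathbb{Z}$. The only minor difference is that for $\pi_1\cong\mathbb{Z}$ the paper appeals directly to geometrization and the classification of closed $S^2\times\mathbb{R}$-geometry manifolds, whereas you go via the sphere theorem and prime decomposition; both routes arrive at $M\cong S^2\times S^1$.
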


\proof{

If $\pi_1(M)$ is a finite cyclic group, by elliptisation theorem $M$ is elliptic. The only elliptic $3$-manifolds with finite cyclic fundamental group are the lens spaces (and $S^3$). 

If $\pi_1(M)=\mathbb{Z}$, $M$ is irreducible and does  not contain incompressible torus and hence is geometric. $M$ has $S^2\times{\mathbb{R}}$ geometry, and is hence either $S^1\times S^2$ or $\mathbb{R}P^2\widetilde{\times }S^1$. $\pi_1(S^1\times S^2)=\mathbb{Z}$, while $\pi_1(\mathbb{R}P^2\widetilde{\times }S^1)$ is not $\mathbb{Z}$. Thus $M=S^1\times S^2$. 

\qedhere}

For the theorems used in the proof, see \cite{martelli}. 
Manifolds we are looking for have Heegaard genus at least $3$.

The software \textit{Regina} \cite{regina} is used in concrete calculation. Given a triangulation, \textit{Regina} computes the state sum invariant. 
The function \texttt{turaevViroApprox()} in \textit{Regina} computes the given Turaev--Viro state sum invariant of a $3$-manifold using a fast but inexact floating-point approximation. 
The function \texttt{turaevViro()} in \textit{Regina} computes the given Turaev--Viro state sum invariant of a $3$-manifold using exact arithmetic. 

We use  the census of all minimal triangulations of all closed prime orientable $3$-manifolds that can be built from $\leq 11$ tetrahedra, as tabulated using \textit{Regina} \cite{burton}. 

%  analysis

We use the original Turaev--Viro TQFT \cite{TV}, at $q=e^{\pi i/r}$, $r=5$.

%
%This is the census of smallest known closed hyperbolic 3-manifolds (where "smallest" refers to volume), as tabulated by Hodgson and Weeks [1]

\subsubsection{Notation}

Notation used in naming $3$-manifolds is described in \textit{Regina}.

The name   {T}  {x}  {I} /[a, b | c, d]  denotes a torus bundle over the circle. This torus bundle is expressed as the product of the torus and the interval, with the two torus boundaries identified according to the monodromy  $\left[\begin{array}{ll}a & b \\ c & d\end{array}\right] $.

A Seifert fibred space is denoted by SFS [B: ($p_{1}, q_{1}$)  ($p_{2}, q_{2}$) $\ldots$  ($p_{k}, q_{k}$)], where  B  is the base orbifold  and each   ($p_{i}, q_{i} $)  describes an exceptional fibre.

\begin{table}
  \centering
  \begin{tabular}{ll}
    \toprule
Symbol     &   Base orbifold\\
\midrule
S2         &  $2$-sphere \\
RP2/n2     &  Real projective plane\\
T          &  Torus \\
KB/n2      &  Klein bottle\\
D          &  Disc with regular boundary\\
M/n2      &   M\"obius band with regular boundary\\
A         &   Annulus with two regular boundaries\\
 \bottomrule
  \end{tabular}
\end{table}

In the special case where there are no exceptional fibres at all, we simply write the space as  B x S1  (if there are no fibre-reversing twists) or  B x\~{} S1  (if there are).

The name SFS   [$B_{1}$: $\ldots$]  {U}/{m} {SFS} [$B_{2}$: $\ldots$],  {m} = [ a,b | c,d  ]  denotes a graph manifold formed from a pair of Seifert fibred spaces $\mathcal{M}_{1}$, $\mathcal{M}_{2}$.  Let  $\phi_{i}$  and  $\omega_{i}$  be curves on the boundary of  $\mathcal{M}_{i}$  representing the fibres and base orbifold, $i=1,2$. We identify both boundaries so that
\[
\left[\begin{array}{l}
\phi_{2} \\
\omega_{2}
\end{array}\right]=\left[\begin{array}{ll}
a & b \\
c & d
\end{array}\right] \times\left[\begin{array}{l}
\phi_{1} \\
\omega_{1}
\end{array}\right]. 
\]

$3$-piece graph manifolds are defined similarly.

\subsubsection{Data for $r=5$}

Here is a table of manifolds with Turaev--Viro invariant   $\geq 7.235$. Their $H_1$ are shown. Here  \textcolor{red}{$\bigstar$} means the Heegaard genus is larger than the rank of $H_1$.

SFS [S2: (2,1) (2,1) (2,1) (2,-1)] : \#1: 13.105572809000083

2 Z\_2 + Z\_4

T x S1 : \#1: 15.999999999999984

3 Z

KB/n2 x\~{} S1 : \#1: 15.999999999999988

Z + 2 Z\_2

SFS [S2: (2,1) (2,1) (2,1) (2,1)] : \#1: 14.894427190999902

2 Z\_2 + Z\_8

SFS [S2: (2,1) (2,1) (2,1) (3,-2)] : \#1: 7.999999999999994

Z\_2 + Z\_10 \textcolor{red}{$\bigstar$} \textcolor{blue}{True by \cite{BZ}. }

SFS [S2: (2,1) (2,1) (2,1) (3,-1)] : \#1: 7.447213595499956

Z\_2 + Z\_14 \textcolor{red}{$\bigstar$} \textcolor{blue}{True by \cite{BZ}. }

SFS [S2: (2,1) (2,1) (2,1) (2,3)] : \#1: 14.894427190999908

2 Z\_2 + Z\_12

SFS [S2: (2,1) (2,1) (2,1) (3,2)] : \#1: 7.4472135954999485

Z\_2 + Z\_26 \textcolor{red}{$\bigstar$} \textcolor{blue}{True by \cite{BZ}. }

SFS [D: (2,1) (2,1)] U/m SFS [D: (2,1) (2,1)], m = [ -1,3 | 0,1 ] : \#1: 7.788854381999807

2 Z\_4 \textcolor{red}{$\bigstar$}

SFS [S2: (2,1) (2,1) (2,1) (2,5)] : \#1: 13.105572809000096

2 Z\_2 + Z\_16

SFS [S2: (2,1) (2,1) (2,1) (3,4)] : \#1: 7.44721359549994

Z\_2 + Z\_34 \textcolor{red}{$\bigstar$} \textcolor{blue}{True by \cite{BZ}. }

SFS [S2: (2,1) (2,1) (2,1) (5,-9)] : \#1: 9.919349550499504

Z\_2 + Z\_6 \textcolor{red}{$\bigstar$} \textcolor{blue}{True by \cite{BZ}. }

SFS [S2: (2,1) (2,1) (2,1) (5,-4)] : \#1: 9.919349550499517

Z\_2 + Z\_14 \textcolor{red}{$\bigstar$} \textcolor{blue}{True by \cite{BZ}. }

SFS [S2: (2,1) (2,1) (2,1) (5,-1)] : \#1: 9.919349550499518

Z\_2 + Z\_26 \textcolor{red}{$\bigstar$} \textcolor{blue}{True by \cite{BZ}. }

SFS [S2: (2,1) (2,1) (2,1) (7,-12)] : \#1: 7.4472135954999406

Z\_2 + Z\_6 \textcolor{red}{$\bigstar$} \textcolor{blue}{True by \cite{BZ}. }

SFS [S2: (2,1) (2,1) (2,1) (7,-4)] : \#1: 7.447213595499941

Z\_2 + Z\_26 \textcolor{red}{$\bigstar$} \textcolor{blue}{True by \cite{BZ}. }

SFS [S2: (2,1) (2,1) (2,1) (7,-3)] : \#1: 7.999999999999984

Z\_2 + Z\_30 \textcolor{red}{$\bigstar$} \textcolor{blue}{True by \cite{BZ}. }

SFS [S2: (2,1) (2,1) (2,1) (7,-2)] : \#1: 7.447213595499941

Z\_2 + Z\_34 \textcolor{red}{$\bigstar$} \textcolor{blue}{True by \cite{BZ}. }

SFS [S2: (2,1) (2,1) (2,1) (2,1) (2,-3)] : \#1: 74.2111456179999
\textcolor{blue}{Note!}

4 Z\_2

SFS [S2: (2,1) (2,1) (2,1) (2,1) (2,-1)] : \#1: 77.78885438199974
\textcolor{blue}{Note!}

3 Z\_2 + Z\_6

SFS [RP2/n2: (2,1) (2,1) (2,-1)] : \#1: 18.89442719099984

2 Z\_2 + Z\_8

SFS [RP2/n2: (2,1) (2,1) (2,1)] : \#1: 22.894427190999874

2 Z\_2 + Z\_8

SFS [T: (2,3)] : \#1: 9.236067977499754

2 Z + Z\_3

SFS [KB/n2: (2,3)] : \#1: 9.236067977499763

Z + Z\_8 \textcolor{red}{$\bigstar$} \textcolor{blue}{False. }

SFS [D: (2,1) (2,1)] U/m SFS [D: (2,1) (2,1)], m = [ -1,4 | 0,1 ] : \#1: 8.422291236000305

2 Z\_2 + Z\_8

SFS [D: (2,1) (2,1)] U/m SFS [D: (2,1) (2,1) (2,1)], m = [ 0,1 | 1,0 ] : \#1: 8.161300899000896

2 Z\_2 + Z\_4

SFS [S2: (2,1) (2,1) (2,1) (2,7)] : \#1: 15.999999999999968

2 Z\_2 + Z\_20

SFS [S2: (2,1) (2,1) (2,1) (3,7)] : \#1: 7.447213595499941

Z\_2 + Z\_46 \textcolor{red}{$\bigstar$} \textcolor{blue}{True by \cite{BZ}. }

SFS [S2: (2,1) (2,1) (2,1) (3,8)] : \#1: 7.999999999999971

Z\_2 + Z\_50 \textcolor{red}{$\bigstar$} \textcolor{blue}{True by \cite{BZ}. }

SFS [S2: (2,1) (2,1) (2,1) (5,1)] : \#1: 9.919349550499517

Z\_2 + Z\_34 \textcolor{red}{$\bigstar$} \textcolor{blue}{True by \cite{BZ}. }

SFS [S2: (2,1) (2,1) (2,1) (5,4)] : \#1: 9.919349550499517

Z\_2 + Z\_46 \textcolor{red}{$\bigstar$} \textcolor{blue}{True by \cite{BZ}. }

SFS [S2: (2,1) (2,1) (2,1) (7,2)] : \#1: 7.999999999999983

Z\_2 + Z\_50 \textcolor{red}{$\bigstar$} \textcolor{blue}{True by \cite{BZ}. }

SFS [S2: (2,1) (2,1) (2,1) (7,3)] : \#1: 7.447213595499948

Z\_2 + Z\_54  \textcolor{red}{$\bigstar$} \textcolor{blue}{True by \cite{BZ}. }

SFS [S2: (2,1) (2,1) (2,1) (13,-21)] : \#1: 7.447213595499955

Z\_2 + Z\_6 \textcolor{red}{$\bigstar$} \textcolor{blue}{True by \cite{BZ}. }

SFS [S2: (2,1) (2,1) (2,1) (13,-8)] : \#1: 7.447213595499948

Z\_2 + Z\_46 \textcolor{red}{$\bigstar$} \textcolor{blue}{True by \cite{BZ}. }

SFS [S2: (2,1) (2,1) (2,1) (2,1) (2,1)] : \#1: 71.99999999999923
\textcolor{blue}{Note!}

3 Z\_2 + Z\_10

SFS [S2: (2,1) (2,1) (2,1) (2,1) (3,-5)] : \#1: 37.10557280900008

2 Z\_2 + Z\_4

SFS [S2: (2,1) (2,1) (2,1) (2,1) (3,-2)] : \#1: 37.10557280900008

2 Z\_2 + Z\_16

SFS [S2: (2,1) (2,1) (2,1) (2,1) (3,-4)] : \#1: 38.894427190999856

2 Z\_2 + Z\_8

SFS [S2: (2,1) (2,1) (2,1) (2,1) (3,-1)] : \#1: 36.00000000000012

2 Z\_2 + Z\_20

SFS [RP2/n2: (2,1) (2,1) (2,3)] : \#1: 16.42229123600026

2 Z\_2 + Z\_8

SFS [RP2/n2: (2,1) (2,1) (3,-2)] : \#1: 9.447213595499948

Z\_4 + Z\_12 \textcolor{red}{$\bigstar$}\textcolor{blue}{False. }

SFS [RP2/n2: (2,1) (2,1) (3,1)] : \#1: 9.447213595499948

Z\_4 + Z\_12 \textcolor{red}{$\bigstar$}\textcolor{blue}{False. }

SFS [RP2/n2: (2,1) (2,1) (3,-1)] : \#1: 11.447213595499925

Z\_4 + Z\_12 \textcolor{red}{$\bigstar$}\textcolor{blue}{False. }

SFS [RP2/n2: (2,1) (2,1) (3,2)] : \#1: 8.211145618000165

Z\_4 + Z\_12 \textcolor{red}{$\bigstar$}\textcolor{blue}{False. }

SFS [T: (1,5)] : \#1: 7.999999999999971

2 Z + Z\_5

SFS [T: (3,4)] : \#1: 9.527864045000376

2 Z + Z\_4

SFS [KB/n2: (1,5)] : \#1: 7.999999999999987

Z + Z\_4 \textcolor{red}{$\bigstar$}\textcolor{blue}{False. }

SFS [KB/n2: (3,4)] : \#1: 9.52786404500038

Z + Z\_2 + Z\_6

SFS [D: (2,1) (2,1)] U/m SFS [D: (2,1) (2,1)], m = [ 1,4 | 0,1 ] : \#1: 9.105572809000032

2 Z\_2 + Z\_16

SFS [D: (2,1) (2,1)] U/m SFS [D: (2,1) (2,1)], m = [ 4,3 | 1,1 ] : \#1: 7.2360679774997605

Z\_4 + Z\_20

SFS [D: (2,1) (2,1)] U/m SFS [D: (2,1) (2,1)], m = [ -1,5 | 0,1 ] : \#1: 7.44721359549993

Z\_4 + Z\_12 \textcolor{red}{$\bigstar$}

SFS [D: (2,1) (2,1)] U/m SFS [D: (2,1) (2,1)], m = [ 5,3 | 2,1 ] : \#1: 7.236067977499749

Z\_4 + Z\_20

SFS [D: (2,1) (2,1)] U/m SFS [D: (2,1) (2,1)], m = [ -5,7 | -3,4 ] : \#1: 7.788854381999794

2 Z\_4 \textcolor{red}{$\bigstar$}

SFS [D: (2,1) (2,1)] U/m SFS [D: (2,1) (2,1)], m = [ -5,8 | -3,5 ] : \#1: 15.577708763999555

2 Z\_2 + Z\_4

SFS [D: (2,1) (2,1)] U/m SFS [D: (2,1) (2,1)], m = [ -5,8 | -2,3 ] : \#1: 10.89442719099983

2 Z\_2 + Z\_8

SFS [D: (2,1) (2,1)] U/m SFS [D: (3,1) (3,2)], m = [ -1,3 | 0,1 ] : \#1: 7.788854381999803

Z\_2 + Z\_18 \textcolor{red}{$\bigstar$}

SFS [D: (2,1) (2,1)] U/m SFS [D: (3,2) (3,2)], m = [ -1,3 | -1,2 ] : \#1: 7.236067977499761

Z\_2 + Z\_30

SFS [D: (2,1) (2,1)] U/m SFS [D: (2,1) (2,1) (2,1)], m = [ 1,1 | 0,1 ] : \#1: 12.1613008990009

2 Z\_2 + Z\_16

SFS [D: (2,1) (2,1)] U/m SFS [D: (2,1) (2,1) (2,1)], m = [ 1,1 | 1,0 ] : \#1: 16.161300899000842

2 Z\_2 + Z\_16

SFS [D: (2,1) (2,1)] U/m SFS [D: (2,1) (2,1) (2,1)], m = [ 1,1 | 1,2 ] : \#1: 15.05572809000077

Z + 2 Z\_2

SFS [D: (2,1) (3,1)] U/m SFS [D: (2,1) (2,1) (2,1)], m = [ 0,1 | 1,0 ] : \#1: 11.447213595499912

Z\_2 + Z\_6 \textcolor{red}{$\bigstar$}

SFS [D: (2,1) (3,1)] U/m SFS [D: (2,1) (2,1) (2,1)], m = [ 0,1 | 1,1 ] : \#1: 9.447213595499933

Z\_2 + Z\_14 \textcolor{red}{$\bigstar$}

SFS [D: (2,1) (3,1)] U/m SFS [D: (2,1) (2,1) (2,1)], m = [ 0,1 | -1,1 ] : \#1: 9.44721359549994

Z\_2 + Z\_34 \textcolor{red}{$\bigstar$}

SFS [D: (2,1) (3,2)] U/m SFS [D: (2,1) (2,1) (2,1)], m = [ 0,1 | 1,0 ] : \#1: 8.080650449500448

Z\_2 + Z\_18 \textcolor{red}{$\bigstar$}

SFS [D: (2,1) (3,2)] U/m SFS [D: (2,1) (2,1) (2,1)], m = [ 0,1 | 1,1 ] : \#1: 7.527864045000408

Z\_2 + Z\_10 \textcolor{red}{$\bigstar$}

SFS [S2: (2,1) (2,1) (2,1) (2,9)] : \#1: 13.105572809000092

2 Z\_2 + Z\_24

SFS [S2: (2,1) (2,1) (2,1) (5,6)] : \#1: 9.919349550499511

Z\_2 + Z\_54 \textcolor{red}{$\bigstar$} \textcolor{blue}{True by \cite{BZ}. }

SFS [S2: (2,1) (2,1) (2,1) (5,9)] : \#1: 9.919349550499511

Z\_2 + Z\_66 \textcolor{red}{$\bigstar$} \textcolor{blue}{True by \cite{BZ}. }

SFS [S2: (2,1) (2,1) (2,1) (7,-13)] : \#1: 7.999999999999955

Z\_2 + Z\_10 \textcolor{red}{$\bigstar$} \textcolor{blue}{True by \cite{BZ}. }

SFS [S2: (2,1) (2,1) (2,1) (7,11)] : \#1: 7.447213595499942

Z\_2 + Z\_86 \textcolor{red}{$\bigstar$} \textcolor{blue}{True by \cite{BZ}. }

SFS [S2: (2,1) (2,1) (2,1) (7,12)] : \#1: 7.999999999999976

Z\_2 + Z\_90 \textcolor{red}{$\bigstar$} \textcolor{blue}{True by \cite{BZ}. }

SFS [S2: (2,1) (2,1) (2,1) (13,-23)] : \#1: 7.447213595499916

Z\_2 + Z\_14 \textcolor{red}{$\bigstar$} \textcolor{blue}{True by \cite{BZ}. }

SFS [S2: (2,1) (2,1) (2,1) (13,-22)] : \#1: 7.999999999999953

Z\_2 + Z\_10 \textcolor{red}{$\bigstar$} \textcolor{blue}{True by \cite{BZ}. }

SFS [S2: (2,1) (2,1) (2,1) (13,8)] : \#1: 7.999999999999988

Z\_2 + Z\_110 \textcolor{red}{$\bigstar$} \textcolor{blue}{True by \cite{BZ}. }

SFS [S2: (2,1) (2,1) (2,1) (13,-3)] : \#1: 7.447213595499927

Z\_2 + Z\_66 \textcolor{red}{$\bigstar$} \textcolor{blue}{True by \cite{BZ}. }

SFS [S2: (2,1) (2,1) (2,1) (15,-26)] : \#1: 9.919349550499476

Z\_2 + Z\_14 \textcolor{red}{$\bigstar$} \textcolor{blue}{True by \cite{BZ}. }

SFS [S2: (2,1) (2,1) (2,1) (15,-11)] : \#1: 9.919349550499486

Z\_2 + Z\_46 \textcolor{red}{$\bigstar$} \textcolor{blue}{True by \cite{BZ}. }

SFS [S2: (2,1) (2,1) (2,1) (15,-4)] : \#1: 9.919349550499486

Z\_2 + Z\_74 \textcolor{red}{$\bigstar$} \textcolor{blue}{True by \cite{BZ}. }

SFS [S2: (2,1) (2,1) (2,1) (17,-29)] : \#1: 7.447213595499921

Z\_2 + Z\_14 \textcolor{red}{$\bigstar$} \textcolor{blue}{True by \cite{BZ}. }

SFS [S2: (2,1) (2,1) (2,1) (17,-12)] : \#1: 7.447213595499927

Z\_2 + Z\_54 \textcolor{red}{$\bigstar$} \textcolor{blue}{True by \cite{BZ}. }

SFS [S2: (2,1) (2,1) (2,1) (17,-27)] : \#1: 7.4472135954999255

Z\_2 + Z\_6 \textcolor{red}{$\bigstar$} \textcolor{blue}{True by \cite{BZ}. }

SFS [S2: (2,1) (2,1) (2,1) (17,-7)] : \#1: 7.447213595499917

Z\_2 + Z\_74 \textcolor{red}{$\bigstar$} \textcolor{blue}{True by \cite{BZ}. }

SFS [S2: (2,1) (2,1) (2,1) (18,-31)] : \#1: 14.894427190999846

2 Z\_2 + Z\_8

SFS [S2: (2,1) (2,1) (2,1) (18,-13)] : \#1: 14.894427190999856

2 Z\_2 + Z\_28

SFS [S2: (2,1) (2,1) (2,1) (18,-29)] : \#1: 13.10557280900005

2 Z\_2 + Z\_4

SFS [S2: (2,1) (2,1) (2,1) (18,-11)] : \#1: 14.894427190999844

2 Z\_2 + Z\_32

SFS [S2: (2,1) (2,1) (2,1) (18,-7)] : \#1: 15.999999999999904

2 Z\_2 + Z\_40

SFS [S2: (2,1) (2,1) (2,1) (18,-5)] : \#1: 13.10557280900005

2 Z\_2 + Z\_44

SFS [S2: (2,1) (2,1) (2,1) (2,1) (2,3)] : \#1: 77.788854382
\textcolor{blue}{Note!}

3 Z\_2 + Z\_14

SFS [S2: (2,1) (2,1) (2,1) (2,1) (3,1)] : \#1: 38.89442719099981

2 Z\_2 + Z\_28

SFS [S2: (2,1) (2,1) (2,1) (2,1) (3,2)] : \#1: 38.89442719099981

2 Z\_2 + Z\_32

SFS [S2: (2,1) (2,1) (2,1) (3,1) (3,-5)] : \#1: 19.447213595499942

Z\_2 + Z\_6 \textcolor{red}{$\bigstar$}\textcolor{blue}{False. }

SFS [S2: (2,1) (2,1) (2,1) (3,1) (3,-2)] : \#1: 18.552786404500043

Z\_2 + Z\_42 \textcolor{red}{$\bigstar$}\textcolor{blue}{False. }

SFS [S2: (2,1) (2,1) (2,1) (3,1) (3,-4)] : \#1: 18.55278640450001

Z\_2 + Z\_18 \textcolor{red}{$\bigstar$}\textcolor{blue}{False. }

SFS [S2: (2,1) (2,1) (2,1) (3,1) (3,-1)] : \#1: 19.4472135955

Z\_2 + Z\_54 \textcolor{red}{$\bigstar$}\textcolor{blue}{False. }

SFS [S2: (2,1) (2,1) (2,1) (3,2) (3,-4)] : \#1: 17.999999999999954

Z\_2 + Z\_30 \textcolor{red}{$\bigstar$}\textcolor{blue}{False. }

SFS [S2: (2,1) (2,1) (2,1) (3,2) (3,-1)] : \#1: 19.447213595500003

Z\_2 + Z\_66 \textcolor{red}{$\bigstar$}\textcolor{blue}{False. }

SFS [RP2/n2: (2,1) (2,1) (2,5)] : \#1: 22.894427191000126

2 Z\_2 + Z\_8

SFS [RP2/n2: (2,1) (2,1) (3,4)] : \#1: 11.447213595499914

Z\_4 + Z\_12 \textcolor{red}{$\bigstar$}\textcolor{blue}{False. }

SFS [RP2/n2: (2,1) (2,1) (3,5)] : \#1: 11.447213595499916

Z\_4 + Z\_12 \textcolor{red}{$\bigstar$}\textcolor{blue}{False. }

SFS [T: (2,7)] : \#1: 9.236067977499735

2 Z + Z\_7

SFS [T: (3,7)] : \#1: 9.236067977499753

2 Z + Z\_7

SFS [T: (3,8)] : \#1: 18.472135954999473

2 Z + Z\_8

SFS [T: (4,5)] : \#1: 7.9999999999999645

2 Z + Z\_5

SFS [KB/n2: (2,7)] : \#1: 9.236067977499738

Z + Z\_8 \textcolor{red}{$\bigstar$}\textcolor{blue}{False. }

SFS [KB/n2: (3,7)] : \#1: 9.23606797749975

Z + Z\_12 \textcolor{red}{$\bigstar$}\textcolor{blue}{False. }

SFS [KB/n2: (3,8)] : \#1: 18.472135954999477

Z + Z\_2 + Z\_6

SFS [KB/n2: (4,5)] : \#1: 7.999999999999974

Z + Z\_16 \textcolor{red}{$\bigstar$}\textcolor{blue}{False. }

T x I / [ 13,8 | 8,5 ] : \#1: 10.472135954999501

Z + 2 Z\_4

T x I / [ -13,-8 | -8,-5 ] : \#1: 10.472135954999501

Z + Z\_2 + Z\_10

SFS [D: (2,1) (2,1)] U/m SFS [D: (2,1) (2,1)], m = [ 1,4 | 0,-1 ] : \#1: 11.577708763999615

2 Z\_2 + Z\_24

SFS [D: (2,1) (2,1)] U/m SFS [D: (2,1) (2,1)], m = [ -3,-4 | 1,1 ] : \#1: 9.105572809000037

2 Z\_2 + Z\_36

SFS [D: (2,1) (2,1)] U/m SFS [D: (2,1) (2,1)], m = [ -3,-4 | 2,3 ] : \#1: 8.422291236000294

2 Z\_2 + Z\_48

SFS [D: (2,1) (2,1)] U/m SFS [D: (2,1) (2,1)], m = [ 1,5 | 0,1 ] : \#1: 7.999999999999984

Z\_4 + Z\_20 \textcolor{red}{$\bigstar$}

SFS [D: (2,1) (2,1)] U/m SFS [D: (2,1) (2,1)], m = [ 4,5 | 1,1 ] : \#1: 7.447213595499944

Z\_4 + Z\_28 \textcolor{red}{$\bigstar$}

SFS [D: (2,1) (2,1)] U/m SFS [D: (2,1) (2,1)], m = [ 4,5 | 3,4 ] : \#1: 7.447213595499939

Z\_4 + Z\_8 \textcolor{red}{$\bigstar$}

SFS [D: (2,1) (2,1)] U/m SFS [D: (2,1) (2,1)], m = [ 5,7 | 3,4 ] : \#1: 7.2360679774997685

Z\_4 + Z\_20

SFS [D: (2,1) (2,1)] U/m SFS [D: (2,1) (2,1)], m = [ 3,8 | 2,5 ] : \#1: 11.57770876399964

2 Z\_2 + Z\_16

SFS [D: (2,1) (2,1)] U/m SFS [D: (2,1) (2,1)], m = [ 5,8 | 2,3 ] : \#1: 10.894427190999833

2 Z\_2 + Z\_32

SFS [D: (2,1) (2,1)] U/m SFS [D: (2,1) (2,1)], m = [ 5,8 | 3,5 ] : \#1: 14.472135954999498

2 Z\_2 + Z\_20

SFS [D: (2,1) (2,1)] U/m SFS [D: (2,1) (2,1)], m = [ -7,12 | -3,5 ] : \#1: 10.894427190999867

2 Z\_2 + Z\_12

SFS [D: (2,1) (2,1)] U/m SFS [D: (2,1) (2,1)], m = [ -5,12 | -2,5 ] : \#1: 15.577708763999548

2 Z\_2 + Z\_16

SFS [D: (2,1) (2,1)] U/m SFS [D: (2,1) (2,1)], m = [ -5,13 | -2,5 ] : \#1: 7.236067977499755

Z\_4 + Z\_20

SFS [D: (2,1) (2,1)] U/m SFS [D: (2,1) (3,1)], m = [ -5,8 | -2,3 ] : \#1: 7.788854381999799

Z\_2 + Z\_18 \textcolor{red}{$\bigstar$}

SFS [D: (2,1) (2,1)] U/m SFS [D: (2,1) (3,2)], m = [ -5,8 | -2,3 ] : \#1: 7.236067977499767

Z\_2 + Z\_30

SFS [D: (2,1) (2,1)] U/m SFS [D: (2,1) (2,1) (2,1)], m = [ 1,1 | -1,0 ] : \#1: 17.950155281000672

2 Z\_2 + Z\_32

SFS [D: (2,1) (2,1)] U/m SFS [D: (2,1) (2,1) (2,1)], m = [ 1,1 | 0,-1 ] : \#1: 11.478019326001158

2 Z\_2 + Z\_32

SFS [D: (2,1) (2,1)] U/m SFS [D: (2,1) (2,1) (2,1)], m = [ 2,1 | 1,0 ] : \#1: 17.95015528100069

2 Z\_2 + Z\_28

SFS [D: (2,1) (2,1)] U/m SFS [D: (2,1) (2,1) (2,1)], m = [ 2,1 | 1,1 ] : \#1: 15.05572809000081

2 Z\_2 + Z\_20

SFS [D: (2,1) (2,1)] U/m SFS [D: (2,1) (2,1) (2,1)], m = [ -1,3 | 0,1 ] : \#1: 34.04984471899907

2 Z\_2 + Z\_16

SFS [D: (2,1) (2,1)] U/m SFS [D: (2,1) (2,1) (2,1)], m = [ 2,1 | 3,1 ] : \#1: 12.1613008990009

2 Z\_2 + Z\_4

SFS [D: (2,1) (2,1)] U/m SFS [D: (2,1) (2,1) (2,1)], m = [ -2,3 | -1,1 ] : \#1: 27.83869910099898

2 Z\_2 + Z\_12

SFS [D: (2,1) (2,1)] U/m SFS [D: (2,1) (2,1) (2,1)], m = [ -2,3 | -1,2 ] : \#1: 28.521980673998794

2 Z\_2 + Z\_4

SFS [D: (2,1) (2,1)] U/m SFS [D: (2,1) (2,1) (2,1)], m = [ -1,3 | -1,2 ] : \#1: 36.52198067399852

2 Z\_2 + Z\_16

SFS [D: (2,1) (2,1)] U/m SFS [D: (2,1) (2,1) (2,1)], m = [ -2,3 | -3,4 ] : \#1: 22.049844718999115

2 Z\_2 + Z\_4

SFS [D: (2,1) (2,1)] U/m SFS [D: (2,1) (2,1) (2,1)], m = [ -1,3 | -1,4 ] : \#1: 32.944271909999095

Z + 2 Z\_2

SFS [D: (2,1) (2,1)] U/m SFS [D: (2,1) (2,1) (3,1)], m = [ 1,1 | 0,1 ] : \#1: 7.527864045000402

Z\_4 + Z\_20 \textcolor{red}{$\bigstar$}

SFS [D: (2,1) (2,1)] U/m SFS [D: (2,1) (2,1) (3,1)], m = [ 1,1 | 1,0 ] : \#1: 7.527864045000403

Z\_4 + Z\_20 \textcolor{red}{$\bigstar$}

SFS [D: (2,1) (2,1)] U/m SFS [D: (2,1) (2,1) (3,1)], m = [ 1,1 | 1,2 ] : \#1: 8.080650449500455

2 Z\_4 \textcolor{red}{$\bigstar$}

SFS [D: (2,1) (2,1)] U/m SFS [D: (2,1) (2,1) (3,2)], m = [ 1,1 | 0,1 ] : \#1: 8.975077640500356

Z\_4 + Z\_28 \textcolor{red}{$\bigstar$}

SFS [D: (2,1) (2,1)] U/m SFS [M/n2: (2,1)], m = [ -2,-1 | 1,1 ] : \#1: 8.552786404500011

Z\_4 + Z\_24 \textcolor{red}{$\bigstar$}

SFS [D: (2,1) (2,1)] U/m SFS [M/n2: (2,1)], m = [ -1,-1 | 2,1 ] : \#1: 8.552786404500011

2 Z\_8 \textcolor{red}{$\bigstar$}

SFS [D: (2,1) (2,1)] U/m SFS [M/n2: (2,1)], m = [ -1,3 | 0,1 ] : \#1: 7.788854381999817

2 Z\_8 \textcolor{red}{$\bigstar$}

SFS [D: (2,1) (2,1)] U/m SFS [M/n2: (2,1)], m = [ 1,-3 | 0,1 ] : \#1: 11.788854381999764

2 Z\_8 \textcolor{red}{$\bigstar$}

SFS [D: (2,1) (2,1)] U/m SFS [M/n2: (2,1)], m = [ -2,3 | 1,-2 ] : \#1: 9.91934955049948

Z\_4 + Z\_8 \textcolor{red}{$\bigstar$}

SFS [D: (2,1) (2,1)] U/m SFS [M/n2: (2,1)], m = [ -2,3 | 1,-1 ] : \#1: 10.683281572999679

Z\_4 + Z\_8 \textcolor{red}{$\bigstar$}

SFS [D: (2,1) (2,1)] U/m SFS [M/n2: (2,1)], m = [ -1,3 | 1,-2 ] : \#1: 9.024922359499575

2 Z\_8 \textcolor{red}{$\bigstar$}

SFS [D: (2,1) (2,1)] U/m SFS [M/n2: (3,1)], m = [ 1,1 | 0,-1 ] : \#1: 10.633436854000454

2 Z\_2 + Z\_24

SFS [D: (2,1) (2,1)] U/m SFS [M/n2: (3,1)], m = [ 1,-2 | 0,1 ] : \#1: 10.683281572999697

Z\_4 + Z\_12 \textcolor{red}{$\bigstar$}

SFS [D: (2,1) (2,1)] U/m SFS [M/n2: (3,1)], m = [ -1,2 | 1,-1 ] : \#1: 10.683281572999693

Z\_4 + Z\_12 \textcolor{red}{$\bigstar$}

SFS [D: (2,1) (2,1)] U/m SFS [M/n2: (3,2)], m = [ 0,1 | 1,-2 ] : \#1: 8.422291236000298

2 Z\_2 + Z\_12

SFS [D: (2,1) (2,1)] U/m SFS [M/n2: (3,2)], m = [ -1,2 | 1,-1 ] : \#1: 9.919349550499494

Z\_4 + Z\_12 \textcolor{red}{$\bigstar$}

SFS [D: (2,1) (2,1)] U/m SFS [M/n2: (5,3)], m = [ 0,1 | 1,-1 ] : \#1: 9.527864045000399

2 Z\_2 + Z\_20

SFS [D: (2,1) (3,1)] U/m SFS [D: (2,1) (2,1) (2,1)], m = [ 0,-1 | 1,0 ] : \#1: 11.447213595499914

Z\_2 + Z\_54 \textcolor{red}{$\bigstar$}

SFS [D: (2,1) (3,1)] U/m SFS [D: (2,1) (2,1) (2,1)], m = [ 0,1 | 1,-1 ] : \#1: 8.211145618000156

Z\_2 + Z\_26 \textcolor{red}{$\bigstar$}

SFS [D: (2,1) (3,2)] U/m SFS [D: (2,1) (2,1) (2,1)], m = [ 0,-1 | 1,0 ] : \#1: 8.975077640500354

Z\_2 + Z\_66 \textcolor{red}{$\bigstar$}

SFS [D: (2,1) (3,2)] U/m SFS [D: (2,1) (2,1) (2,1)], m = [ 1,1 | 0,1 ] : \#1: 7.527864045000412

Z\_2 + Z\_50 \textcolor{red}{$\bigstar$}

SFS [D: (2,1) (3,2)] U/m SFS [D: (2,1) (2,1) (2,1)], m = [ 1,1 | 1,0 ] : \#1: 8.975077640500354

Z\_2 + Z\_54 \textcolor{red}{$\bigstar$}

SFS [D: (3,1) (3,1)] U/m SFS [D: (2,1) (2,1) (2,1)], m = [ -1,1 | 1,0 ] : \#1: 8.975077640500354

Z\_6 + Z\_18 \textcolor{red}{$\bigstar$}

SFS [D: (3,2) (3,2)] U/m SFS [D: (2,1) (2,1) (2,1)], m = [ -1,1 | 0,1 ] : \#1: 7.527864045000394

Z\_2 + Z\_30 \textcolor{red}{$\bigstar$}

SFS [D: (3,2) (3,2)] U/m SFS [D: (2,1) (2,1) (2,1)], m = [ -1,1 | 1,0 ] : \#1: 8.080650449500455

2 Z\_6 \textcolor{red}{$\bigstar$}

SFS [D: (2,1) (2,1)] U/m SFS [A: (4,1)] U/n SFS [D: (2,1) (2,1)], m = [ 0,1 | 1,0 ], n = [ 1,1 | 1,0 ] : \#1: 7.447213595499939

Z\_4 + Z\_12 \textcolor{red}{$\bigstar$}

SFS [D: (2,1) (2,1)] U/m SFS [A: (4,3)] U/n SFS [D: (2,1) (2,1)], m = [ 0,1 | 1,0 ], n = [ 1,1 | 1,0 ] : \#1: 7.788854381999809

2 Z\_4 \textcolor{red}{$\bigstar$}

SFS [D: (2,1) (2,1)] U/m SFS [A: (5,2)] U/n SFS [D: (2,1) (2,1)], m = [ 0,1 | 1,0 ], n = [ 1,1 | 1,0 ] : \#1: 8.21114561800015

Z\_4 + Z\_12 \textcolor{red}{$\bigstar$}

SFS [D: (2,1) (2,1)] U/m SFS [A: (5,3)] U/n SFS [D: (2,1) (2,1)], m = [ 0,1 | 1,0 ], n = [ 0,1 | 1,0 ] : \#1: 8.21114561800015

Z\_4 + Z\_28 \textcolor{red}{$\bigstar$}

SFS [D: (2,1) (2,1)] U/m SFS [A: (5,3)] U/n SFS [D: (2,1) (2,1)], m = [ 0,1 | 1,0 ], n = [ 1,1 | 1,0 ] : \#1: 16.42229123600025

2 Z\_2 + Z\_8

SFS [D: (2,1) (2,1)] U/m SFS [A: (5,3)] U/n SFS [D: (2,1) (2,1)], m = [ 0,-1 | 1,0 ], n = [ 0,1 | 1,0 ] : \#1: 8.21114561800015

Z\_4 + Z\_12 \textcolor{red}{$\bigstar$}

\subsubsection{Analysis}

Here  \textcolor{red}{$\bigstar$} means the Heegaard genus is larger than the rank of $H_1$.   These are potential counterexamples.

\begin{prop}{(Proposition 1.5 in \cite{BZ})}
\[
G= \langle s_{1}, \ldots, s_{2 q}, f \mid s_{1}^{2} f, \ldots, s_{2 q-1}^{2} f, s_{2 q}^{2 \lambda+1} f^{\beta}, s_{1} \ldots s_{2 q} f^{e} \rangle
\]
with  $q \geq 2, \lambda>0$  and  $\beta $ arbitrary, admits a presentation with  $2 q-2$  generators and  $2 q-2$  defining relators. 
\end{prop}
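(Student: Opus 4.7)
The strategy is a sequence of Tietze transformations on the given presentation, which has $2q+1$ generators and $2q+1$ defining relators. I need to eliminate three generators and three relators.

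First I would use the relation $s_1^2 f = 1$ to solve $f = s_1^{-2}$ and eliminate $f$ throughout. This replaces each relation $s_i^2 f = 1$ (for $i = 2, \ldots, 2q-1$) by $s_i^2 = s_1^2$, replaces $s_{2q}^{2\lambda+1} f^\beta = 1$ by $s_{2q}^{2\lambda+1} = s_1^{2\beta}$, and replaces $s_1 \cdots s_{2q} f^e = 1$ by $s_1 s_2 \cdots s_{2q} = s_1^{2e}$. We are now at $2q$ generators and $2q$ relators.

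Next I would use the long product relation to solve
\[
s_{2q} = s_{2q-1}^{-1} s_{2q-2}^{-1} \cdots s_2^{-1}\, s_1^{2e-1}
\]
and eliminate $s_{2q}$, substituting into the power relation $s_{2q}^{2\lambda+1} = s_1^{2\beta}$ to obtain the single relation
\[
(s_{2q-1}^{-1} \cdots s_2^{-1}\, s_1^{2e-1})^{2\lambda+1} = s_1^{2\beta}.
\]
This brings us to $2q-1$ generators $s_1, \ldots, s_{2q-1}$ and $2q-1$ relators: the $2q-2$ equations $s_i^2 = s_1^2$ together with the displayed one.

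The third and crucial step is to eliminate one more generator. Here I would perform the change of generators $x_i := s_i s_1^{-1}$ for $i = 2, \ldots, 2q-1$. The relation $s_i^2 = s_1^2$ becomes the dihedral-type relation $s_1 x_i s_1^{-1} = x_i^{-1}$, and in particular $s_1^2$ is central in the subgroup generated by $\{s_1, x_2, \ldots, x_{2q-1}\}$. Rewriting the power relation in terms of $s_1$ and the $x_i$, and iteratively pushing $s_1$'s through using $s_1 x_i = x_i^{-1} s_1$ together with the centrality of $s_1^2$, the left-hand side collapses to an alternating product $x_2^{\pm 1} x_3^{\pm 1} \cdots x_{2q-1}^{\pm 1}$ multiplied by a central power of $s_1$. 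Since $2\lambda+1$ is odd while the central exponent on the right is even, this simplified equation expresses one of the $x_i$ as a word in the others. Using that expression as a Tietze substitution eliminates both the corresponding generator and the power relation, leaving $2q-2$ generators and $2q-2$ relators.

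The main obstacle is the third step: verifying the precise cancellation pattern when the alternating product is raised to the odd power $2\lambda+1$, and confirming that a single $x_i$ can indeed be isolated. The oddness of $2\lambda+1$ together with the even exponents of $s_1$ appearing on the right-hand side is exactly what makes this elimination possible; for even orders of the exceptional fiber the analogous reduction would fail, which is consistent with the rank-versus-genus phenomenon being driven by the odd fiber in the Boileau--Zieschang examples.
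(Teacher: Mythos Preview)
The paper does not give a proof of this proposition; it simply quotes Proposition~1.5 of Boileau--Zieschang and uses it as a black box. So there is no ``paper's proof'' to compare against, but your argument can still be assessed on its own.

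Your first two eliminations (of $f$ via $f=s_1^{-2}$, and of $s_{2q}$ via the product relation) are correct and standard. The gap is in your third step. After the change of variables $x_i=s_is_1^{-1}$ you correctly obtain the dihedral relations $s_1x_is_1^{-1}=x_i^{-1}$ and the fact that $s_1^2$ is central. With $m=2e-2q+1$ odd one finds $W=Xs_1^{m}$ where $X=x_{2q-1}x_{2q-2}^{-1}\cdots x_3x_2^{-1}$. However, conjugation by $s_1$ inverts each letter of $X$ \emph{without reversing the order}, so $s_1^{m}Xs_1^{-m}=X^{\sigma}:=x_{2q-1}^{-1}x_{2q-2}\cdots x_2$, which is \emph{not} $X^{-1}$. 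Consequently
\[
W^{2\lambda+1}=(XX^{\sigma})^{\lambda}X\,s_1^{(2\lambda+1)m},
\]
and the relation becomes $(XX^{\sigma})^{\lambda}X=s_1^{\,2\beta-(2\lambda+1)m}$. Each $x_i$ occurs $2\lambda+1$ times in the left-hand side, not once, so you cannot isolate a single $x_i$ by a Tietze substitution as you claim. The oddness of $2\lambda+1$ does not by itself force the claimed collapse; the obstruction is precisely that the ``inversion'' automorphism $X\mapsto X^{\sigma}$ is not the group inverse on non-abelian words.

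Boileau--Zieschang's reduction uses a different (and more delicate) choice of new generators to get past this point; the naive dihedral substitution is not enough.
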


Usually the fundamental group considered is a quotient of $G$. We write \textcolor{blue}{True by \cite{BZ}. } if the manifold is a counterexample  by this proposition.

Observe that among the $92$ potential counterexamples, $35$ of them are marked as above. By Theorem 3.1 in \cite{BZ}, they are counterexamples. Also, in the list, any manifold that can be judged by Theorem 3.1 are marked.

Note that the Seifert Euler number in \cite{BZ} differs from the one defined in \cite{martelli} by a sign. 
%
%\begin{conj}
%The Turaev--Viro TQFT with $r=5, q=e^{i\pi /5}$ detects examples? 
%\end{conj}

Theorem 2.1 and Theorem 1.1 in \cite{BZ} also help, and $17$ manifolds are marked with  \textcolor{blue}{False. }

%remarks on unsuccessful search

\subsubsection{Other remarks}

In \cite{weidmann} the writer describes a family of $3$-dimensional graph manifolds consisting of two
Seifert fibre spaces whose base orbifolds are the M\"obius band and a disk and which have one, 
repectively two, exceptional fibres. The considered spaces have $2$-generated fundamental
group but are not of Heegaard genus $2$. It is not difficult to see that these manifolds are
of genus $3$. The precise description of those manifolds is given in Theorem 1 there. 
%
%\begin{thm}{[Weidmann2003]}
%Let  $M$  be the graph manifold obtained from gluing two Seifert manifolds  $M_{1}$  and $ M_{2}$  with base orbifolds  $\mathcal{O}_{M_{1}}=\text{M\"ö} $ or $ \mathcal{O}_{M_{1}}=\text{M\"ö}(p)$  and  $\mathcal{O}_{M_{2}}=D(2,2 l+1) $ along their boundary tori $ T_{1}=\partial M_{1} $ and  $T_{2}=\partial M_{2}$.  Let  $T \subset M $ be the image of $ T_{1} $ and $ T_{2} $ in $ M $.  Suppose that the following hold:
%
%1.  $f_{M_{1}}$  and $ f_{M_{2}}$  have geometric intersection number $1$ on  $T $.
%
%2. The pair $  (M_{1}, f_{M_{2}} ) $ is not the exterior of a $1$-bridge knot in a lens space with a meridian.
%
%3. The pair $  (M_{2}, f_{M_{1}} ) $ is not the exterior of a $2 $-bridge knot in $ S^{3} $ with a meridian.
%
%Then  $g(M)=3$  and  $r(M)=2 $. 
%
%
%\end{thm}
%

Observe that as manifolds, SFS [D: (2,1) (2,-1)]$\cong$  the circle bundle over the M\"obius strip with total space orientable. (See Proposition 10.3.33 in \cite{martelli}. )
As  Seifert fibrations, SFS [D: (2,k) (2,j)]$\cong$SFS [D: (2,1) (2,-1)] for $k,j$ odd, and they need to be considered. (See Proposition 10.3.11 in \cite{martelli}. ) 
The TQFT method only tells us that among the $92$ selected manifolds, genus $\geq 3$. What we can detect is the case genus $\geq 3$ and rank $=2$.

Accordingly, the author thinks that the following manifold might be another  counterexample: 

SFS [D: (2,1) (2,1)] U/m SFS [D: (2,1) (3,1)], m = [ -5,8 | -2,3 ] : \#1: 7.788854381999799

Z\_2 + Z\_18  
\\

The case $r=4$ has been considered by the author and nothing interesting was found in the database.

There is also a  census of smallest known closed hyperbolic $3$-manifolds (where ``smallest" refers to volume), as tabulated by Hodgson and Weeks \cite{weeks}. 
The cases $r=3,4,5,6$ have been considered by the author and nothing interesting was found in this database.

%ACKNOWLEDGEMENTS

\section*{Acknowledgements}

I would like to thank Professor Nicolai Reshetikhin for guiding me to quantum topology, which is the key tool in this thesis, and is in itself interesting and useful. 
Also I would like to thank Professor Yi Liu for introducing me to the conjecture, and Professor Xiaoming Du for discussions before the use of TQFT. I thank Professor Yin Tian for telling me there is something called TQFT. 

I thank Professor Qingtao Chen, Tian Yang and Tao Li for related talks, and Professor Yang Su for inviting me to give a talk at Academy of Mathematics and Systems Science, CAS. I thank Xiang Liu for his idea after the talk, and Professor Yinghua Ai, Si Li,  Zhongzi Wang and Fan Ye for discussions. 

I thank teachers in Yau Mathematical Sciences Center and Department of Mathematical Sciences. Finally I would like to thank my family, especially Rui Ling, for companion.

%appendix B

\section*{About \textit{Regina}}
%
%To be written. 
%
%The software Regina. Related functions in Regina. A demo run. 
%
%Known data (not necessarily useful. )

\textit{Regina} is a software for low-dimensional topology, with a focus on triangulations, knots and links, normal surfaces, and angle structures. 

Any database used here can be downloaded from: 

\href{https://regina-normal.github.io/data.html}{\texttt{https://regina-normal.github.io/data.html}}.

\nocite{*}
\bibliographystyle{amsalpha} % Use the "unsrtnat" BibTeX style for formatting the Bibliography
\bibliography{Bibliography} % The references (bibliography) information are stored in the file named "Bibliography.bib"

\end{document}